\documentclass[11pt]{article}

\usepackage{hyperref}
\usepackage[draft]{fixme}
\usepackage{mathrsfs}
\usepackage[a4paper]{geometry}
\usepackage{amssymb, amsthm, amsfonts, amsxtra, amsmath}
\usepackage{latexsym}
\usepackage{mathabx}
\usepackage{tensor}
\usepackage{pdfpages}
\usepackage{epic}
\usepackage{fouridx}
\usepackage{parskip} 
\makeatletter 
\def\thm@space@setup{%
  \thm@preskip=\parskip \thm@postskip=0pt
}
\makeatother
\usepackage{enumerate}
\usepackage{bbm}

\DeclareMathOperator{\id}{id}

\DeclareMathOperator{\ctau}{\tau}

\DeclareMathOperator{\dyn}{\mathrm{dyn}}
\DeclareMathOperator{\Spec}{\mathrm{Spec}}

\newcommand{\Circt}{{\mathop{\ooalign{$\ovoid$\cr\hidewidth\raise-.05ex\hbox{$\scriptstyle\mathsf T\mkern3.5mu$}\cr}}}} 
\newcommand{\Circtv}[1]{\underset{#1}{\mathop{\ooalign{$\ovoid$\cr\hidewidth\raise-.05ex\hbox{$\scriptstyle\mathsf T\mkern3.5mu$}\cr}}}} 
\newcommand{\smCirct}{\mathop{\ooalign{$\scriptstyle\ovoid$\cr\hidewidth\raise-.05ex\hbox{$\scriptscriptstyle\mathsf T\mkern2.8mu$}\cr}}}  

\newcommand{\su}{\mathfrak{su}}

\newcommand{\C}{\mathbb{C}}
\newcommand{\R}{\mathbb{R}}
\newcommand{\Z}{\mathbb{Z}}
\newcommand{\N}{\mathbb{N}}

\newcommand{\Hsp}{\mathcal{H}}

\newcommand{\Mor}{\mathrm{Mor}}
\newcommand{\alg}{\mathrm{alg}}

\newcommand{\itimes}{\underset{I}{\otimes}}

\newcommand{\GrDA}[3]{{}_{#2}#1_{#3}} 
\newcommand{\Grt}[3]{#1{\tiny {\begin{pmatrix} #2\\#3\end{pmatrix}}}} 

\newcommand{\Unit}{\mathbf{1}}
\newcommand{\UnitC}[2]{\Grt{\mathbf{1}}{#1}{#2}}

\newcommand{\Gr}[5]{\,\tensor*[^{#2}_{#4}]{#1}{^{#3}_{#5}}}
\newcommand{\Gru}[3]{\Gr{#1}{}{}{#2}{#3}}
\newcommand{\Grd}[3]{\Gr{#1}{}{}{#2}{#3}}

\newcommand{\aotimes}{\underset{\alg}{\otimes}}

\newtheorem{Theorem}{Theorem}[section]
\newtheorem*{Theo*}{Theorem}
\newtheorem{Lem}[Theorem]{Lemma}
\newtheorem{Prop}[Theorem]{Proposition}
\newtheorem{Cor}[Theorem]{Corollary}

\theoremstyle{definition}
\newtheorem{Def}[Theorem]{Definition}
\newtheorem{Rem}[Theorem]{Remark}
\newtheorem{Exa}[Theorem]{Example}

\newtheorem{Rems}[Theorem]{Remarks}

\numberwithin{equation}{section}

\begin{document}
\title{C$^*$-algebraic partial compact quantum groups}

\author{Kenny De Commer\thanks{Department of Mathematics, Vrije Universiteit Brussel, VUB, B-1050 Brussels, Belgium, email: {\tt kenny.de.commer@vub.ac.be}} \thanks{Supported by the FWO grant G.0251.15N. This work is also part of the project supported by the NCN-grant 2012/06/M/ST1/00169.}}

\date{}

\maketitle

\begin{abstract}
\noindent In this paper, we introduce C$^*$-algebraic partial compact quantum groups, which are quantizations of topological groupoids with discrete object set and compact morphism spaces. These C$^*$-algebraic partial compact quantum groups are generalisations of Hayashi's compact face algebras to the case where the object set can be infinite. They form the C$^*$-algebraic counterpart of an algebraic theory of partial compact quantum groups developed in an earlier paper by the author and T. Timmermann, the correspondence between which will be dealt with in a separate paper. As an interesting example to illustrate the theory, we show how the dynamical quantum $SU(2)$ group, as studied by Etingof-Varchenko and Koelink-Rosengren, fits into this framework. 
\end{abstract}




\section*{Introduction}

The concept of a \emph{compact quantum group of face type} was introduced by T. Hayashi \cite{Hay1}. A compact quantum group of face type can be interpreted as (a function algebra on a) compact quantum groupoid with a classical, finite object set, but with the source and target maps of the quantum arrow space `delocalized', that is, the corresponding embeddings of the function algebra on the object set are not central. Closely related to these are Ocneanu's \emph{double triangle algebras} \cite{Oc1,PZ1}, and \emph{weak Hopf C$^*$-algebras} \cite{BNS1}, where the object set is no longer assumed classical (see \cite{Sch1,Sch2} for a detailed discussion of the correspondence).  

In this paper, we will introduce the notion of \emph{C$^*$-algebraic partial compact quantum group}, which is a generalisation of Hayashi's construction to the case where the object set can be infinite (but is still discrete). Contrary to the approach in \cite{Hay1}, which follows \cite{DK94}, our definition is more in the spirit of Woronowicz's definition of a compact quantum group \cite{Wor98}, but contains a non-trivial extra density condition. The precise connection with Hayashi's work, as well as with the algebraic theory of partial compact quantum groups developed in \cite{DCT2}, will be dealt with in a separate paper \cite{DCT1}. 

We also make a partial version of Woronowicz's \emph{compact matrix pseudogroups} \cite{W1}. Indeed, although their definition is more restrictive than the one for compact quantum groups, compact matrix pseudogroups and their partial generalisations seem more suitable for constructing examples. 

As an illustration of the theory, we will show how the dynamical quantum $SU(2)$ group \cite{EtV1,KoR1} can be interpreted as a C$^*$-algebraic partial compact quantum group. In fact, this example fits in a much more general framework which will be considered in \cite{DCT1}. We however treat the case of dynamical quantum $SU(2)$ in isolation here, as it is more amenable to a direct, hands-on approach. For example, we completely determine the representation theory of the function algebra on the dynamical quantum $SU(2)$ group. We also mention that a different approach to the operator algebraic implementation of dynamical quantum $SU(2)$ is treated in \cite{Tim1}. The dynamical quantum $SU(2)$ that we treat will be at a parameter $q>0$. The `root of unity case' was considered in \cite[Section 13]{Hay2}, see also \cite{CT1} and \cite{EN1}.

Let us come to the precise contents of this paper. 

The first four sections deal with the general theory. In the \emph{first section}, we introduce C$^*$-algebraic partial compact quantum \emph{semi}groups and their representations. In the \emph{second section}, we impose certain density conditions on these structures to arrive at the notion of a \emph{C$^*$-algebraic partial compact quantum group}. The main result of this section is the existence of an invariant integral in this case. In the \emph{third section}, we introduce \emph{C$^*$-algebraic partial compact matrix pseudogroups}, and prove that they give rise to C$^*$-algebraic partial compact quantum groups. Then, in the \emph{fourth section}, we show how, by a universal construction, C$^*$-algebraic partial compact matrix pseudogroups can be created from purely algebraic data. 

In the final sections, we treat the example of dynamical quantum $SU(2)$. In the \emph{fifth section}, we introduce the dynamical quantum $SU(2)$ group, and show how it fits in the theory of C$^*$-algebraic partial compact quantum groups. In the final \emph{sixth section}, we classify the representations of the function algebra on dynamical quantum $SU(2)$.

\emph{Acknowledgements}: This paper originated from closely related joint work with T. Timmermann, whom I would warmly like to thank for all discussions and pertinent questions. Thanks are also due to E. Koelink, who introduced the author to the dynamical quantum $SU(2)$ group, to L. Va$\breve{\textrm{\i}}$nerman for discussions on quantum groupoids, and to M. Yamashita.

\section{C$^*$-algebraic partial compact quantum semigroups}

For $A$ a C$^*$-algebra, we denote by $M(A)$ the multiplier C$^*$-algebra of $A$. All tensor products $\otimes$ of C$^*$-algebras in this paper will be minimal. We denote by $[\,\cdot\,]$ the closed linear span of a subset of a Banach space. The C$^*$-algebra of all bounded operators on a Hilbert space $\Hsp$ is written $B(\Hsp)$, while the compact operators are denoted $B_0(\Hsp)$. 

\subsection{C$^*$-algebraic partial compact quantum semigroups}

\begin{Def}\label{DefCpcqsg} Let $I$ be a set, in the following referred to as the \emph{object set}. We call \emph{C$^*$-algebraic $I$-partial compact quantum semigroup} $\mathscr{G}$ a triple consisting of 
\begin{itemize}
\item a (not necessarily unital) C$^*$-algebra $A$,
\item a family of orthogonal self-adjoint projections $\UnitC{k}{l}\in A$ for $k,l\in I$, and
\item  a (not necessarily unital)
$^*$-homomorphism \[\Delta: A\rightarrow M(A\otimes A),\] 
\end{itemize}
satisfying the following conditions:
\begin{enumerate}[(a)]
\item[(U1)] $\underset{k,l\in I}{\sum} \UnitC{k}{l}$ converges strictly to the unit in $M(A)$,
\item[(U2)] $\Delta\left(\UnitC{k}{l}\right) = \underset{m\in I}{\sum}\UnitC{k}{m}\otimes \UnitC{m}{l}$ strictly for all $k,l\in I$,
\item[(C)] $\Delta$ is coassociative: \[(\Delta\otimes \id)\circ \Delta = (\id\otimes \Delta)\circ \Delta.\]
\end{enumerate}
\end{Def} 

\begin{Rems} 
\begin{enumerate}
\item $A$ is to be interpreted as the `function algebra $C_0(\mathscr{G})$' on $\mathscr{G}$, see Example \ref{ExaPSG}.
\item We allow the possibility that a $\UnitC{k}{l}$ is zero.
\item In (C), we interpret $(\Delta\otimes \id)$ and $(\id\otimes \Delta)$ as the unique \emph{strictly continuous} extensions to $M(A\otimes A)$. 
\end{enumerate}
\end{Rems} 

\begin{Lem} With $\Delta(1) = \sum_{k,l,m} \UnitC{k}{m}\otimes \UnitC{m}{l}$, we have \begin{equation}\label{CondDiWeak}(A\otimes A)\Delta(1) = [(A\otimes A)\Delta(A)].\end{equation} 
\end{Lem}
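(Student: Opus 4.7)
The plan is to prove both inclusions separately, after establishing the preliminary fact that $\Delta(1)$ acts as a two-sided identity on the image of $\Delta$.

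First I would observe that the finite sums $p_F := \sum_{(k,l)\in F} \UnitC{k}{l}$, indexed over finite $F\subseteq I^{2}$, form an increasing net of projections in $A$ converging strictly to $1\in M(A)$ by (U1), and hence form a norm approximate identity for $A$. Thus $p_F a \to a$ in norm for each $a\in A$, so by norm-continuity of the $*$-homomorphism $\Delta$ one gets $\Delta(p_F)\Delta(a) = \Delta(p_F a) \to \Delta(a)$ in norm. On the other hand, $\Delta(p_F) = \sum_{(k,l)\in F}\Delta(\UnitC{k}{l})$ converges strictly to $\Delta(1)$ by (U2) and the definition given in the statement, so $\Delta(p_F)\Delta(a) \to \Delta(1)\Delta(a)$ strictly. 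Since a norm-convergent net converges strictly to the same limit, this forces $\Delta(1)\Delta(a) = \Delta(a)$, and the symmetric identity $\Delta(a)\Delta(1)=\Delta(a)$ follows identically.

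For the inclusion $[(A\otimes A)\Delta(A)] \subseteq (A\otimes A)\Delta(1)$, take $x\in A\otimes A$ and $a\in A$. By the defining property of the multiplier algebra, $x\Delta(a)\in A\otimes A$, and the preliminary fact gives $x\Delta(a) = (x\Delta(a))\Delta(1) \in (A\otimes A)\Delta(1)$. Since $(A\otimes A)\Delta(1)$ is already norm-closed ($BP$ is closed in $B$ whenever $P$ is a projection in $M(B)$, using $c = \lim b_n P$ implies $c = cP$), the closed linear span on the left is still contained on the right.

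For the reverse inclusion, the key computation is that orthogonality $\UnitC{k}{m}\UnitC{k}{m'}=\delta_{mm'}\UnitC{k}{m}$ together with $\Delta(\UnitC{k}{l}) = \sum_{m'}\UnitC{k}{m'}\otimes\UnitC{m'}{l}$ gives
\[
(\UnitC{k}{m}\otimes\UnitC{m}{l})\,\Delta(\UnitC{k}{l}) \;=\; \UnitC{k}{m}\otimes\UnitC{m}{l},
\]
so every summand of $\Delta(1)$ is individually in $(A\otimes A)\Delta(A)$. Because $\sum_{k,l,m}\UnitC{k}{m}\otimes\UnitC{m}{l}$ converges strictly to $\Delta(1)$ in $M(A\otimes A)$, left-multiplication by any fixed $x\in A\otimes A$ turns this into a norm-convergent sum with limit $x\Delta(1)$, exhibiting $x\Delta(1)$ as a norm limit of finite sums drawn from $(A\otimes A)\Delta(A)$. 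Hence $(A\otimes A)\Delta(1)\subseteq [(A\otimes A)\Delta(A)]$.

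The main obstacle is really only bookkeeping: one must carefully distinguish norm from strict convergence and exploit the fact that norm limits coincide with strict limits when both exist. No deeper ingredient is used beyond (U1), (U2), the orthogonality of the $\UnitC{k}{l}$, and the fact that $\Delta$ is a $*$-homomorphism.
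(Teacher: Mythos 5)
Your proof is correct and follows essentially the same route as the paper: both inclusions come from (U1), (U2) and the orthogonality of the $\UnitC{k}{l}$, via the observation that $\Delta(1)$ acts as a unit on the relevant products. The paper packages this slightly more economically by localizing elementary tensors in the corners $A\UnitC{k}{m}\otimes A\UnitC{l}{n}$, where $(a\otimes b)\Delta(1)=(a\otimes b)\Delta\bigl(\UnitC{k}{n}\bigr)$ holds exactly, whereas you establish the absorption identity $\Delta(a)\Delta(1)=\Delta(a)$ by an approximate-unit argument; the content is the same.
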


Note that the sum defining $\Delta(1)$ is strictly converging to a self-adjoint projection in $M(A\otimes A)$.

\begin{proof} Let us first show that $\supseteq$ holds. As $\Delta$ is a $^*$-homomorphism and $\Delta(A)\subseteq M(A\otimes A)$, it is by (U1) sufficient to show that $\Delta\left(\UnitC{k}{l}\right) \in (A\otimes A)\Delta(1)$. This is immediate by (U2).

Let us show now that $\subseteq$ holds. Again by (U1), it is sufficient to show that $(a\otimes b)\Delta(1) \in  (A\otimes A)\Delta(A)$ for $a$ in some $A\UnitC{k}{m}$ and $b$ in some $A\UnitC{l}{n}$. But since then $a= a\UnitC{k}{m}$ and $b= b\UnitC{l}{n}$, \[(a\otimes b)\Delta(1) = (a\otimes b)\Delta\left(\UnitC{k}{n}\right) \in (A\otimes A)\Delta(A).\]
\end{proof}

\begin{Rem} By the property \eqref{CondDiWeak}, $\Delta$ extends uniquely to a $^*$-homomorphism \[\Delta: M(A)\rightarrow M(A\otimes A)\] with value in the unit precisely $\Delta(1)$. 
\end{Rem}

\begin{Exa}\label{ExaPSG} We will call \emph{partial semigroup} a structure satisfying the axioms for a category, \emph{except} possibly for the existence of units.\footnote{That is, we have an object set $I$ and morphisms spaces $\Mor(k,l)$ between objects, together with associative multiplications $\Mor(k,l)\times \Mor(l,m)\rightarrow \Mor(k,m)$ sending $(g,h)$ to $gh$. Note that we use here multiplication in stead of composition, that is, we interpret $h\circ g = gh$.} Let us say that a partial semigroup is \emph{topological} if the arrow space $\mathscr{G}$ and the object space $I$ are topological (Hausdorff) spaces, and all structure morphisms are continuous maps. Let us call \emph{partial compact semigroup} (over $I$) a topological partial semigroup whose object set $I$ is \emph{discrete}, and which is \emph{proper} in the sense that the map \[\mathscr{G}\rightarrow I\times I, \quad g\mapsto (s(g),t(g)),\] assigning to an arrow its source and target object, is a proper map. Because of the discreteness of $I$, this simply means that all arrow spaces $\mathscr{G}(k,l)$ are compact.  

Consider now $C_0(\mathscr{G})$, so that $C_b(\mathscr{G}\times \mathscr{G}) \cong M(C_0(\mathscr{G})\otimes C_0(\mathscr{G}))$. We claim that $C_0(\mathscr{G})$, together with the orthogonal projections $\UnitC{k}{l}(g) = \delta_{g\in \mathscr{G}(k,l)}$ and the $^*$-homomorphism \[\Delta: C_0(\mathscr{G}) \rightarrow M(C_0(\mathscr{G})\otimes C_0(\mathscr{G})),\quad \Delta(f)(g,h) = \left\{\begin{array}{llll} f(gh)&\textrm{if} &g,h \textrm{ multipliable},\\ 0& \textrm{if}& g,h\textrm{ not multipliable},\end{array}\right.\] which is easily seen to be well-defined by continuity of the structure maps, is a C$^*$-algebraic $I$-partial compact quantum semigroup.  Indeed, the conditions in Definition \ref{DefCpcqsg} are immediately checked.

 It is not difficult to see, by Gelfand duality, that any C$^*$-algebraic $I$-partial compact quantum semigroup $(A,\Delta,\{\UnitC{k}{l}\})$, with $A$ commutative, is of this form. Indeed, denote $\mathscr{G} = \Spec(A)$. By (U1), the $\UnitC{k}{l}$ provide a decomposition $\mathscr{G} = \sqcup_{k,l\in I} \mathscr{G}(k,l)$ with each $\mathscr{G}(k,l)$ a compact (possibly empty) space. Denote \[\mathscr{G}^{(2)} = \bigsqcup_{k,l,m\in I}\left( \mathscr{G}(k,l)\times \mathscr{G}(l,m)\right) \subseteq \mathscr{G}\times \mathscr{G}.\] Then \[C_0\left(\mathscr{G}^{(2)}\right) \cong \Delta(1)(C_0(\mathscr{G})\otimes C_0(\mathscr{G})),\] and $\Delta$ dualizes to a continuous map $\mathscr{G}^{(2)} \rightarrow \mathscr{G}$. Condition (U2) gives that this product restricts to multiplications $\mathscr{G}(k,l)\times \mathscr{G}(l,m)\rightarrow \mathscr{G}(k,m)$, and condition (C) shows this product is associative.
 \end{Exa}

\subsection{Representations of C$^*$-algebraic partial compact quantum semigroups}\label{SectionRepSemi}

Let $I$ be a set. For
$\Hsp=\underset{k,l}{\bigoplus} \,\Grd{\Hsp}{k}{l}$ an $I$-bigraded Hilbert space, we denote by
$p_{kl}^{\Hsp} \in \mathcal{B}(\Hsp)$ the projections onto the homogeneous components. We further write
\begin{align*}
  \lambda^{\Hsp}_{k} &= \sum_{l} p_{kl}^{\Hsp}, &
  \rho^{\Hsp}_{l} &= \sum_{k} p_{kl}^{\Hsp},
\end{align*}
 the sums converging in the strong operator topology.

We will in the following definition use the \emph{leg numbering notation}, e.g. $X_{23} = 1\otimes X$ or $1\otimes X\otimes 1$ etc.

 \begin{Def} \label{def:corepresentation} Let $\mathscr{G} = (A,\Delta,\{\UnitC{k}{l}\})$ be a
   $C^{*}$-algebraic $I$-partial compact quantum semigroup. 

   A \emph{representation} of $\mathscr{G}$ is given by an $I$-bigraded Hilbert
   space $\Hsp$ together with an element $X \in M(A \otimes B_0(\Hsp))$
   satisfying the following conditions.
   \begin{enumerate}[a)]
   \item[(Co1)] With $\Delta \otimes \id$ extended to the multiplier algebra, we have
     \begin{align} \label{eq:corep}
     (\Delta \otimes \id)(X) = X_{13}X_{23}.  
   \end{align}
 \item[(Co2)] For all $k,l,m,n\in I$, \[(\UnitC{k}{m}\otimes 1)X(\UnitC{l}{n}\otimes 1) = (1\otimes p_{kl}^{\Hsp})X(1\otimes p_{mn}^{\Hsp}).\]
     
   \end{enumerate}
  A representation is called \emph{row- and column finite dimensional}, briefly
   \emph{rcfd}, if $\lambda^{\Hsp}_{k}$ and $\rho^{\Hsp}_{m}$ have finite rank for all $k,m\in I$.
\end{Def}

\begin{Rem} 
The rcfd condition is the proper generalization of finite dimensionality to the partial case. Indeed, the condition that $\Hsp$ itself be finite dimensional is in general too strong. 
\end{Rem} 

\begin{Exa} \label{exa:corep-trivial}
 Denote by $e_{km}$ the matrix units in $B(l^{2}(I))$ with respect to the standard basis $\{\delta_k\}$. Consider $l^2(I)$ with the diagonal $I^2$-grading $\GrDA{l^2(I)}{k}{l}= \delta_{k,l} \C\delta_k$.
 Then the sum
 \begin{align*}
  E = \sum_{k,m} \UnitC{k}{m} \otimes e_{km} 
\end{align*}
converges strictly in $M(A\otimes B_0(l^{2}(I)))$ to an rcfd representation. We call $E$ the \emph{trivial representation} of $\mathscr{G}$. 
\end{Exa}

\begin{Exa}\label{ExaClasRep} Let $\mathscr{G}$ be an $I$-partial compact semigroup as in Example \ref{ExaPSG}. Then an element $X \in M(C_0(\mathscr{G}) \otimes B_0(\Hsp))$ corresponds to a $\sigma$-strong$^*$-continuous uniformly bounded map \[\pi: \mathscr{G}\rightarrow B(\Hsp).\] If $X$ satisfies (Co2), we obtain that $g\in \mathscr{G}(k,m)$ satisfies $\pi(g) \in B(\GrDA{\Hsp}{m}{m},\GrDA{\Hsp}{k}{k})$, and $\pi(g)$ zero on all other components. Condition (Co1) gives that $\pi(gh) = \pi(g)\pi(h)$ when $g,h$ are multipliable. 

If we also ask that $X$ is \emph{non-degenerate}, that is, $X(C_0(\mathscr{G})\otimes B_0(\Hsp)) = C_0(\mathscr{G})\otimes B_0(\Hsp)$, then we see that $\GrDA{\Hsp}{k}{l}=0$ for $k\neq l$, i.e.\ the bigrading on $\Hsp$ is just a grading by $I$. The reader can easily verify that all uniformly bounded non-degenerate continuous representations of $\mathscr{G}$ (defined in the obvious manner) arise in this way. 
\end{Exa}

We will need to know how to tensor representations of a C$^*$-algebraic $I$-partial compact quantum semigroup $\mathscr{G}= (A,\Delta,\{\UnitC{k}{l}\})$. Let $X$ and $Y$ be $\mathscr{G}$-representations on respective $I$-bigraded Hilbert spaces $\Hsp$ and $\mathcal{K}$. Define $\Hsp \itimes\mathcal{K}$ as the Hilbert space
\begin{align*}
      \Hsp \itimes \mathcal{K} := \bigoplus_{k,l,m} \Grd{\Hsp}{k}{l} \otimes \Grd{\mathcal{K}}{l}{m}\subseteq \Hsp\otimes \mathcal{K}.
    \end{align*}
Then $\Hsp\itimes \mathcal{K}$ is again $I$-bigraded, the bigradation being given in the above decomposition by the $k$ and $m$-indices. 

Let $P$ be the orthogonal projection of $\Hsp\otimes \mathcal{K}$ onto $\Hsp\itimes \mathcal{K}$. Then it is easily checked, by (Co2), that $X_{12}Y_{13}P_{23} = X_{12}Y_{13}$. Hence we can interpret 
    \begin{align*}
      X \Circt Y:= X_{12}Y_{13}\in M(A \otimes B_0(\Hsp \itimes \mathcal{K})).
    \end{align*}
 It is then easily seen to be a representation of $\mathscr{G}$ on the Hilbert space $\Hsp\itimes \mathcal{K}$. Indeed, clearly
\[(\Delta \otimes \id)(X_{12}Y_{13}) = X_{13}X_{23}Y_{14}Y_{24} =
    X_{13}Y_{14}X_{23}Y_{24}.\] Also (Co2) is immediately verified. 
    
If $X$ and $Y$ are rcfd representations, then so is $X\Circt Y$.

\section{C$^*$-algebraic partial compact quantum groups}

\begin{Def}\label{DefCpcqg} Let $\mathscr{G}= (A,\Delta,\{\UnitC{k}{l})$ be a C$^*$-algebraic $I$-partial compact quantum semigroup. We call $\mathscr{G}$ a \emph{C$^*$-algebraic $I$-partial compact quantum group} if also the following conditions are satisfied:
\begin{enumerate}[(a)]
\item[(U3)] $\UnitC{k}{k}\neq 0$ for all $k\in I$. 
\item[(D1)]  We have \begin{equation}\label{CondDi}(A\otimes A)\Delta(1) = [(A\otimes 1)\Delta(A)] = [(1\otimes A)\Delta(A)].\end{equation} 
\item[(D2)] With $P=\sum_{k} \UnitC{k}{k}\in M(A)$, and $A_P = PAP$, we have \begin{equation}\label{CondDii} A=[(\omega\otimes \id)\Delta(A_P)\mid \omega \in A^*] = [(\id\otimes \omega)\Delta(A_P)\mid \omega \in A^*].\end{equation}
\end{enumerate}
\end{Def} 

\begin{Rems}
\begin{enumerate}
\item
The condition (U3) is a non-degeneracy condition: if $\UnitC{k}{k}$ were zero, $k$ could simply be dropped from the set $I$ since then also $\UnitC{k}{l}=0$ and $\UnitC{l}{k}=0$ for all $l$ using (U2) and the upcoming Lemma \ref{LemEqRel}.
\item Note that the inclusions of the right hand sides in the left hand side are automatically true in condition (D1), by (U1) and (U2).
\item Further comments on condition (D2) will be given in Remark \ref{RemEquiv}. Note that in the case of a compact quantum group ($|I|= 1$), condition (D2) follows immediately from condition (D1). 
\end{enumerate}
\end{Rems}

\begin{Exa} Let $\mathscr{G}$ be an \emph{$I$-partial compact group}, that is, an $I$-partial compact semigroup (see Example \ref{ExaPSG}) whose underlying categorical structure is a groupoid (and, in particular, a genuine category with identity maps). We claim that $(C_0(\mathscr{G}),\Delta,\{\UnitC{k}{l}\})$ is a C$^*$-algebraic $I$-partial compact quantum group. 

Indeed, the existence of identity arrows in a groupoid ensures that (U3) is satisfied. The density conditions of (D1) follow from the injectivity and properness of the map \[\mathscr{G}^{(2)}\rightarrow \mathscr{G}\times \mathscr{G},\quad (g,h)\mapsto (g,gh).\] The first density condition (D2) follows since if $\mathscr{G}(l,k)\neq \emptyset$, then also $\mathscr{G}(k,l) \neq \emptyset$, so for any chosen $g\in \mathscr{G}(k,l)$ we have a homeomorphism \[\mathscr{G}(l,k)\rightarrow \mathscr{G}(k,k),\quad h\mapsto gh,\] whence \[C(\mathscr{G}(l,k)) = \{(\mathrm{ev}_g\otimes \id)\Delta(f)\mid f\in C(\mathscr{G}(k,k))\}.\] The other density condition in (D2)
 follows similarly.
 
Conversely, if $(A,\Delta,\{\UnitC{k}{l}\})$ is a C$^*$-algebraic $I$-partial compact quantum group with $A$ commutative, then the associated $I$-partial compact semigroup $\mathscr{G}= \Spec(A)$ is an $I$-partial compact group. Indeed, condition (D1) gives that multiplication is left and right cancellative, and by (U3) the $\mathscr{G}(k,k)$ are non-empty. Hence the $\mathscr{G}(k,k)$ are compact groups, see e.g. \cite[Proposition 3.2]{MVD1}. 
 
 To conclude that $\mathscr{G}$ is a groupoid, it suffices to show that $\mathscr{G}(k,l)\neq \emptyset$ implies $\mathscr{G}(l,k) \neq \emptyset$. But this follows from (D2) by the upcoming Lemma \ref{LemEqRel}. 
 \end{Exa}
 
\begin{Rem}\label{RemRaum} The discussion at the end of the previous example also shows that the condition (D2) is necessary. Indeed, if $A$ is the function algebra on the category with two objects $\{1,2\}$, two identity arrows and one arrow $1\rightarrow 2$, then $(A,\Delta,\{\UnitC{k}{l}\})$, with $\Delta$ dual to composition and all $\UnitC{k}{l}$ as before except $\UnitC{2}{1}=0$, satisfies all requirements except (D2) (we thank S. Raum for this observation).
\end{Rem} 

\begin{Exa}
Let $\mathscr{G}$ be a (discrete) groupoid with object set $I$. Let  $C^*_u(\mathscr{G})$ be the universal groupoid C$^*$-algebra of $\mathscr{G}$, that is, $C^*_u(\mathscr{G})$ is generated by elements $\theta_g$ for $g$ an arrow in $\mathscr{G}$, with the relations \[\theta_g \theta_h = \left\{\begin{array}{llll} \theta_{gh}&\textrm{if} &g,h \textrm{ multipliable},\\ 0& \textrm{if}& g,h\textrm{ not multipliable},\end{array}\right.\] and with $\theta_g^* = \theta_{g^{-1}}$.

Then $(C^*_u(\mathscr{G}),\Delta,\{\UnitC{k}{l}\})$ is a C$^*$-algebraic $I$-partial compact quantum group by means of the orthogonal projections $\UnitC{k}{l} = \delta_{k,l} \theta_{\id_k}$ and the coproduct \[\Delta(\theta_g) = \theta_g\otimes \theta_g.\]
Indeed, it is easily seen that the above defines an $I$-partial compact quantum semigroup. As C$^*_u(\mathscr{G})$ admits a regular representation on $l^2(\mathscr{G})$, also (U3) is satisfied. Then (D1) follows from the surjectivity of the maps \[\mathscr{G}^{(2)}\rightarrow \{(g,h)\mid t(g) = t(h)\},\quad (g,h) \mapsto (gh,h),\]\[\mathscr{G}^{(2)}\rightarrow \{(g,h)\mid s(g) = s(h)\},\quad (g,h) \mapsto (g,gh),\] while (D2) is in this case a direct consequence of (D1) since the projection P of condition (D2) equals 1.
\end{Exa}

Let us return now to general C$^*$-algebraic $I$-partial compact quantum groups. The following lemma is elementary but important.

\begin{Lem}\label{LemEqRel}
Let $I$ be a set and let $(A,\Delta,\{\UnitC{k}{l}\})$  be a $C^{*}$-algebraic $I$-partial compact quantum group. Then $k\sim l \Leftrightarrow \UnitC{k}{l}\neq 0$ is an equivalence relation on $I$.
\end{Lem}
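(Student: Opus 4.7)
My plan is to verify the three properties of an equivalence relation separately. Reflexivity is immediate from (U3). For transitivity, given $\UnitC{k}{l}, \UnitC{l}{m} \neq 0$, I would expand $\Delta(\UnitC{k}{m}) = \sum_n \UnitC{k}{n} \otimes \UnitC{n}{m}$ via (U2); since the family $\{\UnitC{r}{s}\}$ is pairwise orthogonal, the tensor summands are pairwise orthogonal projections in $M(A\otimes A)$, so the fact that the $n=l$ summand $\UnitC{k}{l} \otimes \UnitC{l}{m}$ is nonzero already forces $\Delta(\UnitC{k}{m}) \neq 0$, hence $\UnitC{k}{m} \neq 0$.

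Symmetry is the genuine step, and is where (D2) must be used. I would argue the contrapositive: assume $\UnitC{l}{k} = 0$, and aim to show $\UnitC{k}{l} = 0$. The central computation is
\[
(\UnitC{k}{l} \otimes 1)\Delta(P) \;=\; \sum_{j,p} \UnitC{k}{l}\UnitC{j}{p}\otimes \UnitC{p}{j} \;=\; \UnitC{k}{l}\otimes \UnitC{l}{k},
\]
where $\Delta(P) = \sum_j \Delta(\UnitC{j}{j}) = \sum_{j,p}\UnitC{j}{p}\otimes \UnitC{p}{j}$ strictly by (U2), and the second equality uses the pairwise orthogonality to keep only the $(j,p)=(k,l)$ term. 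Under the assumption $\UnitC{l}{k} = 0$, this vanishes. Since any $a \in A_P = PAP$ satisfies $\Delta(a) = \Delta(P)\Delta(a)\Delta(P)$, I obtain $(\UnitC{k}{l} \otimes 1)\Delta(a) = 0$ for every $a \in A_P$. Applying $\id\otimes \omega$ for arbitrary $\omega \in A^*$ then gives $\UnitC{k}{l}\cdot(\id\otimes \omega)\Delta(a) = 0$. The second identity in (D2) now forces $\UnitC{k}{l}A = 0$, and since $\UnitC{k}{l} \in A$ this yields $\UnitC{k}{l} = \UnitC{k}{l}^2 \in \UnitC{k}{l}A = \{0\}$, as required.

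The only real obstacle is spotting the identity $(\UnitC{k}{l}\otimes 1)\Delta(P) = \UnitC{k}{l}\otimes \UnitC{l}{k}$, which is what links the two flipped indices; once it is in hand, (D2) does the rest essentially automatically. Symmetry could equally well be derived from the other form of (D2) via the mirror identity $(1\otimes \UnitC{k}{l})\Delta(P) = \UnitC{l}{k}\otimes\UnitC{k}{l}$.
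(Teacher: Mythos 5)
Your proposal is correct and follows essentially the same route as the paper: reflexivity from (U3), transitivity from (U2), and symmetry by cutting $\Delta(A_P)$ with $\UnitC{k}{l}$ on one leg, observing the other leg is supported under $\UnitC{l}{k}$, and invoking (D2). The paper simply runs this with the $(\omega\otimes\id)$-half of (D2) in the direct (rather than contrapositive) form, which is exactly the mirror variant you mention at the end.
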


\begin{proof}
The relation is reflexive by (U3) and transitive by (U2). If $\UnitC{k}{l}\neq 0$, then by (D2) it must be contained in $[(\omega \otimes \id)(\Delta(\UnitC{l}{l}A\UnitC{l}{l})) | \omega \in A^{*}]$, whence $\UnitC{l}{k}A\UnitC{l}{k}\neq 0$ and $\UnitC{l}{k} \neq 0$, so $\sim$ is symmetric.
\end{proof}

\begin{Rem} It is not clear if the condition (D2) is \emph{equivalent} with $\sim$ being an equivalence relation. At least in the commutative and cocommutative case this holds, as we have shown above. 
\end{Rem} 

We next introduce the notion of invariant integral for a C$^*$-algebraic partial compact quantum group. Let us first introduce some more notation. 

\begin{Def}\label{Defstproj} Let $\mathscr{G} = (A,\Delta,\{\UnitC{k}{l}\})$ be a C$^*$-algebraic $I$-partial compact quantum semigroup. We write \[\lambda_k = \sum_{m}\UnitC{k}{m},\qquad \rho_m = \sum_{k}\UnitC{k}{m},\] which give well-defined projections in $M(A)$. We call them respectively \emph{source} and \emph{target} projections (corresponding to the respective objects $k$ and $m$). 
\end{Def}

\begin{Def} Let $\mathscr{G}=(A,\Delta,\{\UnitC{k}{l}\})$ be a C$^*$-algebraic $I$-partial compact quantum semigroup. An \emph{invariant integral} for $\mathscr{G}$ consists of a weight $\phi: A^+ \rightarrow [0,+\infty]$ satisfying the following conditions.
\begin{enumerate}[a)]
\item[(I1)] For all $k,m$ with $\UnitC{k}{m}\neq 0$, \[\phi(\UnitC{k}{m}) = 1.\]
\item[(I2)] For all $a\in A^+$, \[\phi(a) = \sum_{k,m\in I} \phi\left(\UnitC{k}{m}a\UnitC{k}{m}\right).\]
\item[(I3)] For all $a\in A^+$ and all states $\omega\in A^*$, \begin{equation}\label{EqInvL} \phi((\omega \otimes \id)\Delta(a)) = \sum_{k} \omega(\lambda_k)\phi(\lambda_ka\lambda_k),\end{equation} \begin{equation}\label{EqInvR} \phi((\id\otimes \omega)\Delta(a)) = \sum_{m}\omega(\rho_m)\phi(\rho_ma\rho_m),\end{equation} with the convention $0\cdot \infty = 0$. 
\end{enumerate} 
\end{Def}

Clearly, the formula \[\phi_{km}(a) = \phi(\UnitC{k}{m}a\UnitC{k}{m})\] defines a (bounded) positive functional $\phi_{km}$ on $A$. If $\UnitC{k}{m}\neq 0$ it is a state, otherwise it is the zero functional. By abuse of language, we will in the following refer to the complete family of $\phi_{km}$ as `states', so the reader should bear in mind that some of them can be zero functionals.

It is also clear by (I2) that $\phi$ is completely determined by the family
$\{\phi_{km}\}$.  

In terms of the $\phi_{km}$, the left and right invariance properties
\eqref{EqInvL} and \eqref{EqInvR} take the following form.  

\begin{Lem}
For all $a\in A$ and all $k,l\in I$,
\begin{align} \label{eq:invariance}  (\id \otimes
\phi_{kl})(\Delta(a)) &= \sum_{m}  \phi_{ml}(a)\UnitC{m}{k} , & (\phi_{kl} \otimes \id)(\Delta(a))&= \sum_{m} \phi_{km}(a) \UnitC{l}{m}  ,
\end{align}
where the sums converge strictly.
\end{Lem}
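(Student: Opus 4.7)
\emph{Proof plan.} The plan is to prove the first identity by pairing with arbitrary $\omega \in A^*$ and reducing to the left-invariance (I3); the second identity will follow by a symmetric argument. First note that the right-hand side $\sum_m \phi_{ml}(a) \UnitC{m}{k}$ converges strictly in $M(A)$, since the projections $\UnitC{m}{k}$ ($m \in I$) are mutually orthogonal with sum $\rho_k$ while $|\phi_{ml}(a)| \leq \|a\|$. Both sides therefore live in $M(A)$, and their equality is equivalent to the scalar identity
\[
(\omega \otimes \phi_{kl}) \Delta(a) = \sum_m \phi_{ml}(a)\, \omega(\UnitC{m}{k})
\]
for every $\omega \in A^*$ and $a \in A$.

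By linearity and norm approximation of $a$ by finite sums $\sum_{p, q, p', q' \in F} \UnitC{p}{q} a \UnitC{p'}{q'}$, I may restrict attention to $a \in \UnitC{p}{q} A \UnitC{p'}{q'}$. A direct analysis of the second-leg support of $\Delta(a) = \Delta(\UnitC{p}{q})\Delta(a)\Delta(\UnitC{p'}{q'})$ against the cut-off $(1 \otimes \UnitC{k}{l})$ forces both sides to vanish unless $q = q' = l$, and in that remaining case the relation $\Delta(\UnitC{p}{l})(\UnitC{p}{k} \otimes 1) = \UnitC{p}{k} \otimes \UnitC{k}{l}$ (coming from (U2)) combined with $\Delta(a) = \Delta(\UnitC{p}{l})\Delta(a)\Delta(\UnitC{p'}{l})$ yields the key rewriting
\[
(1 \otimes \UnitC{k}{l}) \Delta(a) (1 \otimes \UnitC{k}{l}) = (\UnitC{p}{k} \otimes 1) \Delta(a) (\UnitC{p'}{k} \otimes 1).
\]
Unwrapping $\phi_{kl}(x) = \phi(\UnitC{k}{l} x \UnitC{k}{l})$ then gives
\[
(\omega \otimes \phi_{kl}) \Delta(a) = \phi\bigl((\widetilde\omega \otimes \id) \Delta(a)\bigr), \qquad \widetilde\omega(x) := \omega(\UnitC{p}{k} x \UnitC{p'}{k}).
\]

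Next I apply (I3). Since $a$ need not be positive, I will first extend (I3) by polarization: taking $b = \UnitC{p}{l}$ and $c = a \UnitC{p'}{l}$ (both $\phi$-square-integrable since $\phi(b^* b) = 1$ and $c^* c \leq \|a\|^2 \UnitC{p'}{l}$), one has $a = b^* c$ and $4 b^* c = \sum_{j=0}^3 i^j (b + i^{-j} c)^*(b + i^{-j} c)$; combining (I3) on the four positive summands will give
\[
\phi\bigl((\widetilde\omega \otimes \id) \Delta(a)\bigr) = \sum_{p''} \widetilde\omega(\lambda_{p''})\, \phi(\lambda_{p''} a \lambda_{p''}).
\]
A direct computation shows $\widetilde\omega(\lambda_{p''}) = \delta_{p = p' = p''}\, \omega(\UnitC{p}{k})$ and $\lambda_{p''} a \lambda_{p''} = \delta_{p = p' = p''}\, a$, so the sum collapses to $\delta_{p=p'}\, \omega(\UnitC{p}{k})\, \phi_{pl}(a)$, using $\phi(a) = \phi_{pl}(a)$ from (I2). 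A parallel computation of $\sum_m \phi_{ml}(a)\, \omega(\UnitC{m}{k})$ on the block $\UnitC{p}{l} A \UnitC{p'}{l}$ yields exactly the same expression.

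The step I expect to be most delicate is the polarization extension of (I3) from positive to non-positive elements: one must check that $\phi$ extends linearly (compatibly with the invariance identity) to the $*$-subalgebra spanned by products $\mathfrak{n}_\phi^* \mathfrak{n}_\phi$, and that the right-hand sums over $p''$ from the four positive instances combine absolutely and collapse correctly. This is standard weight-theoretic bookkeeping but is the only non-routine move. The second identity is proved by the symmetric argument, using the right-invariance version of (I3), the target projections $\rho_m$ in place of the source projections $\lambda_p$, and swapping the roles of the two legs of $\Delta$.
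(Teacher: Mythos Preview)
Your approach is sound in outline but has a gap precisely at the polarization step, in the off-diagonal case $p\neq p'$. After your key rewriting you reduce the left-hand side to $\phi\bigl((\widetilde\omega\otimes\id)\Delta(a)\bigr)$ with $\widetilde\omega(x)=\omega(\UnitC{p}{k}\,x\,\UnitC{p'}{k})$, and you then propose to polarize $a=b^{*}c$ into four positive summands $d_{j}$ and invoke (I3) on each. But (I3) is stated for \emph{states} in the first slot, and when $p\neq p'$ the functional $\widetilde\omega$ is not positive (even for $\omega$ a state), so (I3) simply does not apply to the pairs $(d_{j},\widetilde\omega)$. Polarizing only in $a$ does not repair this; you would also have to polarize $\widetilde\omega$ into a combination of positive functionals and verify that the resulting double sums recombine correctly. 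That is doable, but it is more than the ``standard bookkeeping'' you describe, and it is exactly the point where your argument, as written, breaks.

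The paper avoids the issue entirely by reversing your two reductions. Instead of first cutting $a$ down to a single block $\UnitC{p}{l}A\UnitC{p'}{l}$ (which destroys positivity) and then polarizing, it works from the start with $a\in A^{+}$ and a state $\omega$. One observes directly that
\[
\omega\bigl((\id\otimes\phi_{kl})\Delta(a)\bigr)=\phi\bigl((\omega(\rho_{k}\,\cdot\,\rho_{k})\otimes\id)\Delta(\rho_{l}a\rho_{l})\bigr),
\]
which is your key rewriting but carried out with the target projections $\rho_{k},\rho_{l}$ rather than with individual block projections. Now the first-slot functional $\omega(\rho_{k}\,\cdot\,\rho_{k})$ is positive and $\rho_{l}a\rho_{l}$ is positive, so (I3) applies verbatim; using $\rho_{k}\lambda_{m}\rho_{k}=\UnitC{m}{k}$ and $\lambda_{m}\rho_{l}=\UnitC{m}{l}$ gives the claimed sum immediately. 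The general case follows by writing an arbitrary $a\in A$ as a linear combination of four positives, with no polarization of functionals needed.
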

\begin{proof} For $\omega\in A^*$ positive and $a\in A^+$, we have \begin{eqnarray*} \omega((\id\otimes \phi_{kl})(\Delta(a))) &=& \omega(\rho_k (\id\otimes \phi_{kl})(\Delta(\rho_l a\rho_l))\rho_k) \\ &=& \phi_{kl}((\omega(\rho_k\,\cdot\,\rho_k)\otimes \id)\Delta(\rho_l a\rho_l))\\ &=& \phi((\omega(\rho_k\,\cdot\,\rho_k)\otimes \id)\Delta(\rho_l a\rho_l)) \\ &=& \sum_m \omega(\UnitC{m}{k})\phi\left(\UnitC{m}{l}a\UnitC{m}{l}\right).\end{eqnarray*} This implies the first equality in \eqref{eq:invariance}. The second equality follows similarly. 
\end{proof} 

Conversely, if \eqref{eq:invariance} holds for a family of states on $A$ with supports on the $\UnitC{k}{l}A\UnitC{k}{l}$, then it is clear that their sum will define an invariant weight. 

The relations \eqref{eq:invariance} can also  be rewritten in terms of the associative
convolution product on $A^*$  defined by \[(\chi*\omega)(a) =
(\chi\otimes \omega)\Delta(a).\] Let us write \begin{equation}\label{DefSpB} \Gr{B}{k}{m}{l}{n} = \{\omega \in A^* \mid \forall a\in A, \omega(a) = \omega\left(\UnitC{k}{m}a\UnitC{l}{n}\right)\}.\end{equation}Then the convolution product restricts to products \[\Gr{B}{k}{m}{l}{n}\times \Gr{B}{m}{p}{n}{q}\rightarrow \Gr{B}{k}{p}{l}{q},\] all other products between homogeneous components being zero. The left and right invariance properties \eqref{EqInvL} and \eqref{EqInvR} can now be written in terms of the $\phi_{km}\in \Gr{B}{k}{m}{k}{m}$ as \begin{equation}\label{EqInvLp}\omega*\phi_{km} = \omega(\UnitC{p}{k})\phi_{pm},\qquad \forall \omega \in \Gr{B}{p}{k}{q}{k},\end{equation}
\begin{equation}\label{EqInvRp}\phi_{km}*\omega = \omega(\UnitC{m}{q})\phi_{kq},\qquad \forall \omega \in \Gr{B}{m}{p}{m}{q}.\end{equation}

\begin{Theorem}\label{TheoInvInt} Each C$^*$-algebraic $I$-partial compact quantum group admits a unique invariant integral.
\end{Theorem}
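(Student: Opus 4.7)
My plan is to adapt Woronowicz's existence proof for the Haar state of a compact quantum group to the partial setting: first construct the diagonal states $\phi_{kk}$ via Ces\`aro averaging on the corners $A_{kk}=\UnitC{k}{k}A\UnitC{k}{k}$, then extend to off-diagonal corners $\phi_{km}$ using the coproduct and the density condition (D2), and finally assemble a lower-semicontinuous weight $\phi=\sum_{k,m}\phi_{km}$.

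\emph{Uniqueness} is the easier direction. If $\phi,\phi'$ are two invariant integrals with corner restrictions $\phi_{km},\phi'_{km}$, I would evaluate $(\phi_{km}\otimes\phi'_{km})\circ\Delta$ on an element $a\in A_{km}$. Using \eqref{EqInvLp} on the first tensor factor and \eqref{EqInvRp} on the second, and collapsing the resulting sums via the support conditions on $\phi_{km},\phi'_{km}$ together with (I1), the two evaluations agree and give $\phi_{km}(a)=\phi'_{km}(a)$.

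\emph{Existence.} Lemma \ref{LemEqRel} lets me reduce to a single equivalence class of $\sim$, since corner contributions from distinct classes decouple. Fix $k$ in this class: the space $\Gr{B}{k}{k}{k}{k}\subset A^*$ is a Banach $*$-algebra under convolution (by the grading rule $(k,k)\cdot(k,k)\to(k,k)$), and its state space is a weak-$*$ compact convex subsemigroup. Picking any faithful state $\psi$ on $A_{kk}$, I form the Ces\`aro means $\psi_N=\frac{1}{N}\sum_{j=1}^{N}\psi^{*j}$ and extract a weak-$*$ accumulation point $\phi_{kk}$, which automatically satisfies $\psi*\phi_{kk}=\phi_{kk}*\psi=\phi_{kk}$ and is idempotent. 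The density condition (D1) restricted to $A_{kk}$ then upgrades this to $\omega*\phi_{kk}=\omega(\UnitC{k}{k})\phi_{kk}$ for every $\omega\in\Gr{B}{k}{k}{k}{k}$, simultaneously yielding uniqueness of $\phi_{kk}$. For $k\neq m$ in the same class I define
\begin{align*}
\phi_{km}(a)\,\UnitC{k}{m} \;:=\; (\id\otimes\phi_{mm})(\Delta(a)), \qquad a\in A_{km},
\end{align*}
after proving the right-hand side lies in $\C\cdot\UnitC{k}{m}$. The 1-dimensionality is the crux: setting $b:=(\id\otimes\phi_{mm})\Delta(a)$, coassociativity gives $\Delta(b)=(\id\otimes(\id\otimes\phi_{mm})\Delta)\Delta(a)$, and combined with the established invariance of $\phi_{mm}$ and a slicing argument leveraging (D2), this pins $b$ down to a scalar multiple of $\UnitC{k}{m}$. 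Normalization (I1) falls out from $\phi_{mm}(\UnitC{m}{m})=1$, positivity is preserved since a slice of a completely positive map by a state is positive, and the invariance identities \eqref{EqInvLp}-\eqref{EqInvRp} follow from a direct coassociativity computation. Finally $\phi=\sum_{k,m}\phi_{km}$ defines a lower-semicontinuous weight on $A^+$, the sum converging by orthogonality of the corner projections $\UnitC{k}{m}$.

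The main obstacle is proving that the range of $a\mapsto(\id\otimes\phi_{mm})\Delta(a)$ on $A_{km}$ is one-dimensional: this is where the full strength of (D2) is indispensable, as Remark \ref{RemRaum} already exhibits a structure satisfying everything but (D2) for which the off-diagonal corner cannot carry a state. A secondary technical point is verifying consistent normalization across all pairs $(k,m)$ in a given equivalence class, which rests on the symmetry $k\sim m\iff m\sim k$ from Lemma \ref{LemEqRel} and careful bookkeeping of the slice supports through the grading $\Gr{B}{k}{m}{l}{n}$.
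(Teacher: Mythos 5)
Your overall strategy is the paper's: construct the diagonal corner states $\phi_{kk}$ by Ces\`aro averaging, extend to the off-diagonal corners via the coproduct, and let (D2) carry the weight of the extension. But at the step you yourself call the crux there is a genuine gap, not a technicality. For $a\in\UnitC{k}{m}A\UnitC{k}{m}$ and any $\omega\in\Gr{B}{k}{m}{k}{m}$ one has $\omega\bigl((\id\otimes\phi_{mm})\Delta(a)\bigr)=(\omega*\phi_{mm})(a)$, so by Hahn--Banach the claim that $(\id\otimes\phi_{mm})\Delta(a)$ lies in $\C\,\UnitC{k}{m}$ (with the scalar independent of the functional used to test it) is equivalent to the invariance identity $\omega*\phi_{mm}=\omega(\UnitC{k}{m})\phi_{km}$ on that corner, i.e.\ to \eqref{EqInvL2} --- the very property you are trying to construct. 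Gesturing at ``coassociativity plus a slicing argument leveraging (D2)'' therefore does not prove anything; an independent argument is required, and your sketch does not contain one. The paper gets around this by a different device: it sets $\phi_{km}:=\theta_{km}*\phi_{mm}$ for an \emph{arbitrary} auxiliary state $\theta_{km}$ supported on $\UnitC{k}{m}A\UnitC{k}{m}$, proves the separation Lemma \ref{LemRefSep} from (D2) (if $\chi*\omega=0$ for all $\chi$ in the complementary corner of $A^*$, then $\omega=0$), and then runs the convolution computation of Lemma \ref{LemRedCorn}, which simultaneously shows independence of $\theta_{km}$ and yields \eqref{EqInvL2}; the degenerate corners with $\UnitC{k}{m}=0$ are disposed of by Lemma \ref{LemEqRel}. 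Some such argument must be supplied for your construction to close.

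Two further points. First, your diagonal step relies on a faithful state on $A_{kk}$, which need not exist for a nonseparable C$^*$-algebra, and the ``upgrade'' from invariance under the single state $\psi$ to $\omega*\phi_{kk}=\omega(\UnitC{k}{k})\phi_{kk}$ for all $\omega\in\Gr{B}{k}{k}{k}{k}$ is exactly the hard part: note that $A_{kk}$ is not itself a compact quantum group ($\Delta$ does not map it into $A_{kk}\otimes A_{kk}$), so ``(D1) restricted to $A_{kk}$'' is not a meaningful shortcut. The paper instead adapts \cite{MVD1} inside $A$ --- the dominated-functional step $0\le\rho\le\omega$, with (D1) used to replace $(c\otimes 1)\Delta(a)$ by $\UnitC{k}{k}\otimes d$ --- and avoids faithfulness altogether through the finite-intersection compactness argument over the sets $K_\omega$ in Proposition \ref{PropInvPart}. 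Second, in your uniqueness computation the pairing $(\phi_{km}\otimes\phi'_{km})\circ\Delta$ vanishes identically on $A_{km}$ when $k\neq m$, because the middle indices do not match the support conditions; the correct pairing is, e.g., $\psi_{kk}*\phi_{km}$ as in Lemma \ref{LemLeftisRight}, or $(\phi_{kl}\otimes\phi'_{lm})\circ\Delta$. The idea (play left invariance of one integral against right invariance of the other) is right; the indices as written are not.
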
 

We will split the proof of Theorem \ref{TheoInvInt} into several steps, setting the stage so that eventually the arguments of \cite{MVD1} can be applied almost verbatim. We fix in the following a C$^*$-algebraic $I$-partial compact quantum group $\mathscr{G} = (A,\Delta,\{\UnitC{k}{l}\})$.

We will refer to families of states satisfying \eqref{EqInvL} (or equivalently \eqref{EqInvLp}) as a \emph{left invariant integral}, and to those satisfying \eqref{EqInvRp} as \emph{right invariant integral}.

\begin{Lem}\label{LemLeftisRight} Let $\{\phi_{km}\}$ be a left, and $\{\psi_{km}\}$ a right invariant integral for $\mathscr{G}$. Then $\phi_{km}= \psi_{km}$ for all $k,m$. 
\end{Lem}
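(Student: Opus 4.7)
The plan is to exploit the convolution-algebra reformulations \eqref{EqInvLp} and \eqref{EqInvRp} of invariance in a manner parallel to the classical uniqueness-of-Haar-state argument for compact quantum groups, being careful with the index bookkeeping that the partial setting introduces.

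First I would pick indices $a, k, m \in I$ and compute the convolution product $\psi_{ak}*\phi_{km}$ in two different ways. Since $\psi_{ak}\in \Gr{B}{a}{k}{a}{k}$, left invariance of $\phi$ in the form \eqref{EqInvLp} (applied with $p=q=a$) yields
\[
\psi_{ak}*\phi_{km} \;=\; \psi_{ak}\!\left(\UnitC{a}{k}\right)\phi_{am}.
\]
On the other hand $\phi_{km}\in \Gr{B}{k}{m}{k}{m}$, so right invariance of $\psi$ in the form \eqref{EqInvRp} (applied with $p=q=m$) yields
\[
\psi_{ak}*\phi_{km} \;=\; \phi_{km}\!\left(\UnitC{k}{m}\right)\psi_{am}.
\]
Whenever $\UnitC{a}{k}\neq 0$ and $\UnitC{k}{m}\neq 0$, the functionals $\psi_{ak}$ and $\phi_{km}$ are states on $\UnitC{a}{k}A\UnitC{a}{k}$ and $\UnitC{k}{m}A\UnitC{k}{m}$ respectively, so $\psi_{ak}(\UnitC{a}{k}) = 1 = \phi_{km}(\UnitC{k}{m})$. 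Comparing the two expressions gives $\phi_{am}=\psi_{am}$.

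It then remains to verify that for every pair $(a,m)$, an intermediate index $k$ with $\UnitC{a}{k}\neq 0$ and $\UnitC{k}{m}\neq 0$ can be found whenever it matters. If $\UnitC{a}{m}\neq 0$, I simply choose $k=a$: then $\UnitC{a}{a}\neq 0$ by (U3) and $\UnitC{a}{m}\neq 0$ by assumption, so both convolution identities are nontrivial and yield $\phi_{am}=\psi_{am}$. If instead $\UnitC{a}{m}=0$, both $\phi_{am}$ and $\psi_{am}$ are supported on $\UnitC{a}{m}A\UnitC{a}{m}=0$, hence are the zero functional, and the equality is trivial.

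There is no real obstacle here—it is essentially the Maes--Van Daele argument from \cite{MVD1}, with the convolution-product formalism of \eqref{DefSpB}--\eqref{EqInvRp} providing exactly the right framework. The only subtlety that the partial case introduces over the usual compact quantum group case is the need for the bridging index $k$, which is supplied cleanly by axiom (U3).
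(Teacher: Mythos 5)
Your proof is correct and is essentially the paper's own argument: the paper convolves $\psi_{kk}$ with $\phi_{km}$ and uses left invariance of $\phi$, right invariance of $\psi$, (U3), and the normalization $\phi_{km}(\UnitC{k}{m})=1$, which is exactly your computation with the bridging index specialized to $k=a$. Your extra remarks (the general bridging index and the trivial case $\UnitC{a}{m}=0$) are fine but only cosmetic additions to the same argument.
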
 
\begin{proof} By the invariance properties, and the fact that $\UnitC{k}{k}\neq 0$, we have \[\phi_{km}  = \psi_{kk}\left(\UnitC{k}{k}\right)\phi_{km} = \psi_{kk}*\phi_{km}= \phi_{km}\left(\UnitC{k}{m}\right)\psi_{km} = \psi_{km}.\]

\end{proof} 

By Lemma \ref{LemLeftisRight}, the unicity in Theorem \ref{TheoInvInt} already follows. It implies as well that, by symmetry, it is sufficient to find a left invariant integral for $\mathscr{G}$.

The following lemma will be crucial.

\begin{Lem}\label{LemRefSep} Let $\omega \in \Gr{B}{k}{m}{l}{n}$, and assume $\chi*\omega =  0$, resp. $\omega*\chi= 0$ for all $\chi \in \Gr{B}{m}{k}{n}{l}$. Then $\omega =0$.
\end{Lem}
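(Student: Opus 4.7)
The plan is to reduce the vanishing of $\omega$ to the hypothesis via the density condition (D2), which is precisely the axiom this lemma should be exploiting. By symmetry (using the two halves of (D2) respectively) it suffices to treat the case in which $\chi*\omega=0$ for all $\chi\in\Gr{B}{m}{k}{n}{l}$, and to show $\omega$ vanishes on every element of the form $(\chi'\otimes\id)\Delta(b)$ with $b\in A_P$ and $\chi'\in A^*$; by the first density equality in (D2), such elements have dense linear span in $A$.

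The main computation is then to rewrite $\omega\bigl((\chi'\otimes\id)\Delta(b)\bigr)=(\chi'\otimes\omega)\Delta(b)$ and to insert the support projections of $\omega$. Since $\omega\in\Gr{B}{k}{m}{l}{n}$, this equals $(\chi'\otimes\omega)\bigl((1\otimes\UnitC{k}{m})\Delta(b)(1\otimes\UnitC{l}{n})\bigr)$. Now $b\in A_P$ means $b=\UnitC{p}{p}b\UnitC{q}{q}$ for some $p,q\in I$, so by (U2),
\[
\Delta(b)=\Delta(\UnitC{p}{p})\Delta(b)\Delta(\UnitC{q}{q})=\sum_{r,s}(\UnitC{p}{r}\otimes\UnitC{r}{p})\Delta(b)(\UnitC{q}{s}\otimes\UnitC{s}{q}).
\]
Multiplying by $1\otimes\UnitC{k}{m}$ on the left and $1\otimes\UnitC{l}{n}$ on the right forces $r=k$, $p=m$, $s=l$, $q=n$ using orthogonality of the $\UnitC{i}{j}$, so only the term $b\in\UnitC{m}{m}A\UnitC{n}{n}$ with the single summand survives, giving
\[
(1\otimes\UnitC{k}{m})\Delta(b)(1\otimes\UnitC{l}{n})=(\UnitC{m}{k}\otimes\UnitC{k}{m})\Delta(b)(\UnitC{n}{l}\otimes\UnitC{l}{n}).
\]

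Applying $\chi'\otimes\omega$ and absorbing the leg-one projections into $\chi'$, I define $\tilde\chi(x):=\chi'(\UnitC{m}{k}x\UnitC{n}{l})$, which lies in $\Gr{B}{m}{k}{n}{l}$. Using that $\omega$ already annihilates the projections on the right leg, the expression collapses to $(\tilde\chi\otimes\omega)\Delta(b)=(\tilde\chi*\omega)(b)$, which vanishes by hypothesis. Hence $\omega\bigl((\chi'\otimes\id)\Delta(b)\bigr)=0$ for all such $b,\chi'$, and by the density statement in (D2) we conclude $\omega=0$. The analogous case $\omega*\chi=0$ is handled identically using the second equality in (D2), with a functional $\tilde\chi$ constructed on the second tensor leg instead.

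The only delicate point is bookkeeping: tracking how the support conditions on $\omega$ and on $b\in A_P$ force exactly one quadruple of indices to survive in the double sum coming from $\Delta(\UnitC{p}{p})\Delta(\UnitC{q}{q})$, and recognizing that the residual piece of $\chi'$ automatically lives in $\Gr{B}{m}{k}{n}{l}$, so that the hypothesis can be applied. No further input beyond (D2), (U2), and the orthogonality of the projections $\UnitC{i}{j}$ is needed.
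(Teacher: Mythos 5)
Your proposal is correct and follows essentially the same argument as the paper: insert the support projections of $\omega$ via (U2), observe that only the single term $(\UnitC{m}{k}\otimes \UnitC{k}{m})\Delta(b)(\UnitC{n}{l}\otimes \UnitC{l}{n})$ survives, absorb the first-leg projections into $\chi'$ to land in $\Gr{B}{m}{k}{n}{l}$, and conclude by the density in (D2). The only cosmetic difference is that you reduce to $b$ supported in a single corner $\UnitC{p}{p}A\UnitC{q}{q}$ (justified by linearity and continuity), whereas the paper keeps the strictly convergent sums throughout.
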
 

\begin{proof} Assume that $\chi*\omega =0$ for all $\chi \in \Gr{B}{m}{k}{n}{l}$.
Then since $\omega = \omega(\UnitC{k}{m}\,\cdot\,\UnitC{l}{n})$, we have for all $\chi\in A^*$ and $a\in A$ that, writing $P= \sum_p \UnitC{p}{p}$, \begin{eqnarray*} \omega((\chi\otimes \id)(\Delta(PaP))) &=& (\chi \otimes \omega)((\sum_{k',m'}\UnitC{m'}{k'}\otimes \UnitC{k'}{m'}))\Delta(a)(\sum_{l',n'}\UnitC{n'}{l'}\otimes \UnitC{l'}{n'})) \\ &=&  (\chi \otimes \omega)\left(\left(\UnitC{m}{k}\otimes \UnitC{k}{m}\right)\Delta(a)\left(\UnitC{n}{l}\otimes \UnitC{l}{n}\right)\right) \\
&=&  \left(\chi\left(\UnitC{m}{k}\,\cdot\,\UnitC{n}{l}\right)*\omega\right)(a) \\
  &=&0,\end{eqnarray*} 
since $\chi\left(\UnitC{m}{k}\,\cdot\,\UnitC{n}{l}\right) \in \Gr{B}{m}{k}{n}{l}$. By condition (D2) in Definition \ref{DefCpcqg}, we conclude $\omega=0$. 

By a similar argument, we have that $\omega*\chi= 0$ for all $\chi \in \Gr{B}{m}{k}{n}{l}$ implies $\chi =0$. 
\end{proof}

\begin{Rem}\label{RemEquiv} Assume that $(A,\Delta,\{\UnitC{k}{l}\})$ satisfies all axioms for a C$^*$-algebraic partial compact quantum group, except possibly (D2). Assume however that the conditions in Lemma \ref{LemRefSep} hold. Then we claim that $(A,\Delta,\{\UnitC{k}{l}\})$ is a C$^*$-algebraic partial compact quantum group. Indeed, by the Hahn-Banach theorem, Lemma \ref{LemRefSep} entails that, for all $k,l,m,n\in I$, \[\lbrack (\chi\otimes \id)\Delta(A_P) \mid \chi\in \Gr{B}{m}{k}{n}{l} \rbrack = \UnitC{k}{m}A\UnitC{l}{n},\]  from which one half of the condition (D2) immediately follows. The other half follows by symmetry. 

This interpretation makes the density condition (D2) very natural, since eventually one would like $B^*$ to be a $^*$-algebra in which $\omega^**\omega = 0$ implies $\omega =0$. 
\end{Rem}

\begin{Lem}\label{LemRedCorn} Assume that there exists a family of states $\{\phi_{kk}\}$ in $\Gr{B}{k}{k}{k}{k}$ such that, for any $\omega \in \Gr{B}{k}{k}{k}{k}$, one has \[\omega*\phi_{kk} = \omega\left(\UnitC{k}{k}\right)\phi_{kk}.\] Then $(A,\Delta)$ admits a left invariant state.
\end{Lem}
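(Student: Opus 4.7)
For each $k,m\in I$ with $\UnitC{k}{m}\neq 0$, I introduce the unital completely positive map
\[T_{km}\colon \UnitC{k}{m}A\UnitC{k}{m}\longrightarrow \UnitC{k}{m}A\UnitC{k}{m},\qquad T_{km}(a):=(\id\otimes \phi_{mm})\Delta(a).\]
Well-definedness (landing in the stated corner) and unitality $T_{km}(\UnitC{k}{m})=\UnitC{k}{m}$ are routine consequences of (U2) and the support of $\phi_{mm}$; idempotency $T_{km}^{2}=T_{km}$ follows from $\phi_{mm}\ast\phi_{mm}=\phi_{mm}$, which is the hypothesis specialized to $\omega=\phi_{mm}$.

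The crux of the argument is the scalar-valuedness $T_{km}(\UnitC{k}{m}A\UnitC{k}{m})\subseteq \C\UnitC{k}{m}$. Coassociativity yields the identity
\[T_{km}\bigl((\omega\otimes\id)\Delta(x)\bigr)=(\omega\otimes\id)\Delta\bigl(T_{mm}(x)\bigr),\qquad x\in A,\ \omega\in A^{*}.\]
Specializing to $x\in \UnitC{m}{m}A\UnitC{m}{m}$, the hypothesis rewrites as $T_{mm}(x)=\phi_{mm}(x)\UnitC{m}{m}$, and the elementary computation $(\omega\otimes\id)\Delta(\UnitC{m}{m})=\omega(\UnitC{m}{k})\UnitC{k}{m}$ for $\omega\in \Gr{B}{m}{k}{m}{k}$ gives
\[T_{km}\bigl((\omega\otimes\id)\Delta(x)\bigr)=\phi_{mm}(x)\,\omega(\UnitC{m}{k})\,\UnitC{k}{m}\;\in\; \C\UnitC{k}{m}.\]
Compressing (D2) by $\UnitC{k}{m}$ on both sides shows that elements of this form span a norm-dense subspace of $\UnitC{k}{m}A\UnitC{k}{m}$; together with the boundedness of $T_{km}$ and the closedness of $\C\UnitC{k}{m}$, the scalar-valuedness follows. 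This density reduction, which is the one essential use of (D2), is where I expect the main difficulty to lie.

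Granted scalar-valuedness, define $\phi_{km}\in \Gr{B}{k}{m}{k}{m}$ by $T_{km}(a)=\phi_{km}(a)\UnitC{k}{m}$, and set $\phi_{km}=0$ whenever $\UnitC{k}{m}=0$. Positivity and the normalization $\phi_{km}(\UnitC{k}{m})=1$ are inherited from $T_{km}$. The left invariance $\omega\ast\phi_{km}=\omega(\UnitC{p}{k})\phi_{pm}$ for $\omega\in \Gr{B}{p}{k}{q}{k}$ is trivial when $p\neq q$ (since then $\omega(\UnitC{p}{k})=0$ and, by a direct compression calculation, the left-hand side also vanishes). For $p=q$ it reduces, via the characterization $\phi_{km}=\chi\ast\phi_{mm}$ (valid for any state $\chi\in \Gr{B}{k}{m}{k}{m}$ with $\chi(\UnitC{k}{m})=1$, as seen by evaluating both sides on $\UnitC{k}{m}A\UnitC{k}{m}$) together with associativity of convolution, to the identity $\nu\ast\phi_{mm}=\nu(\UnitC{p}{m})\phi_{pm}$ for $\nu\in \Gr{B}{p}{m}{p}{m}$, which in turn is a direct consequence of the already-established scalar-valuedness of $T_{pm}$ on $\UnitC{p}{m}A\UnitC{p}{m}$.
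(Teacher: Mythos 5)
Your construction of the diagonal data is sound, and it is essentially a primal reformulation of the paper's argument: your observation that $\phi_{km}=\chi*\phi_{mm}$ for any normalized $\chi\in\Gr{B}{k}{m}{k}{m}$ shows your states coincide with the paper's $\theta_{km}*\phi_{mm}$, and your use of (D2) (compressed by $\UnitC{k}{m}$ to get density of the slices $(\omega'\otimes\id)\Delta(x)$, $\omega'\in\Gr{B}{m}{k}{m}{k}$, $x\in\UnitC{m}{m}A\UnitC{m}{m}$, in the corner) replaces the paper's dual route through the separation Lemma \ref{LemRefSep}. The genuine gap is the step you declare trivial: for $\omega\in\Gr{B}{p}{k}{q}{k}$ with $p\neq q$, the vanishing of $\omega*\phi_{km}$ is \emph{not} a direct compression calculation. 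The grading rules only give $\omega*\phi_{km}\in\Gr{B}{p}{m}{q}{m}$, and this space is nonzero in general (it vanishes in the commutative and cocommutative examples, which makes the issue easy to miss, but not, say, for dynamical quantum $SU(2)$). Writing $\omega*\phi_{km}=(\omega*\chi)*\phi_{mm}$, what you need is $\nu*\phi_{mm}=0$ for every $\nu\in\Gr{B}{p}{m}{q}{m}$, equivalently that $a\mapsto(\id\otimes\phi_{mm})\Delta(a)$ annihilates the off-diagonal corners $\UnitC{p}{m}A\UnitC{q}{m}$. This does not follow from scalar-valuedness of the diagonal maps $T_{km}$: a unital completely positive map can preserve the corner decomposition, be scalar on each diagonal corner, and still be nonzero on off-diagonal corners (the identity map of $M_2(\C)$ with its two diagonal corners). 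So as written, the off-diagonal half of left invariance --- precisely the diagonality of $(\id\otimes\phi_{kl})\Delta(a)$ in \eqref{eq:invariance} --- is unproved.

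The gap is repairable inside your own framework by running the density argument once more on off-diagonal compressions of (D2): compressing (D2) by $\UnitC{p}{m}$ on the left and $\UnitC{q}{m}$ on the right shows that $\UnitC{p}{m}A\UnitC{q}{m}$ is densely spanned by elements $(\omega'\otimes\id)\Delta(x)$ with $\omega'\in\Gr{B}{m}{p}{m}{q}$ and $x\in\UnitC{m}{m}A\UnitC{m}{m}$, and for these, coassociativity and the hypothesis give $(\id\otimes\phi_{mm})\Delta\bigl((\omega'\otimes\id)\Delta(x)\bigr)=\phi_{mm}(x)\sum_{r}\omega'(\UnitC{m}{r})\UnitC{r}{m}=0$, since $\omega'(\UnitC{m}{r})=\omega'\bigl(\UnitC{m}{p}\UnitC{m}{r}\UnitC{m}{q}\bigr)=0$ for all $r$ when $p\neq q$. (This is exactly the point where the paper instead invokes Lemma \ref{LemRefSep}.) You should also treat the degenerate indices: when $\UnitC{k}{m}=0$ your normalized $\chi$ does not exist and $\phi_{km}=0$ by fiat, and checking that the right-hand side $\omega(\UnitC{p}{k})\phi_{pm}$ also vanishes then requires Lemma \ref{LemEqRel}, as in the paper's proof.
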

\begin{proof} Let $\theta_{rm}\in \Gr{B}{r}{m}{r}{m}$ be a collection of functionals on $A$ with $\theta_{rm}$ a state whenever $\UnitC{r}{m}\neq \{0\}$, and $\theta_{rm}=0$ otherwise. Write \[\phi_{rm} = \theta_{rm}*\phi_{mm}.\] By assumption, this notation is consistent in the case $r=m$. 

Assume now that $\omega \in \Gr{B}{k}{r}{l}{r}$ and $\chi \in \Gr{B}{m}{k}{m}{l}$. Assume first that $\UnitC{r}{m}\neq 0$ and $\UnitC{k}{m}\neq 0$. Then \begin{eqnarray*} \chi*(\omega*\phi_{rm}) &=& (\chi*\omega*\theta_{rm})*\phi_{mm} \\ &=&  (\chi*\omega*\theta_{rm})(\UnitC{m}{m}) \phi_{mm} \\ &=&  \chi(\UnitC{m}{k})\omega(\UnitC{k}{r})\phi_{mm} \\ &=&  \omega(\UnitC{k}{r}) \; (\chi*\theta_{km})\left(\UnitC{m}{m}\right)\phi_{mm}
\\ &=& \omega(\UnitC{k}{r}) \; (\chi*\theta_{km})*\phi_{mm} \\ &=&  \omega(\UnitC{k}{r}) \; \chi*\phi_{km} .\end{eqnarray*} As $\chi$ was arbitrary, we find by Lemma \ref{LemRefSep} that \begin{equation}\label{EqInvL2} \omega*\phi_{rm} =  \omega(\UnitC{k}{r}) \phi_{km}.\end{equation}

Assume now that $\UnitC{r}{m}=0$. By Lemma \ref{LemEqRel}, $\UnitC{r}{k}=0$ or $\UnitC{k}{m}=0$. Again by Lemma \ref{LemEqRel}, either $\UnitC{k}{r}=0$ or $\UnitC{k}{m}=0$. In either case, both sides of \eqref{EqInvL2} are zero. 

Similarly, if $\UnitC{k}{m}=0$, we conclude that either $\UnitC{k}{r}=0$ or $\UnitC{r}{m}=0$, and again both sides of \eqref{EqInvL2} are zero.

This shows that \eqref{EqInvL2} holds for all indices, and hence $\{\phi_{km}\}$ is a left invariant integral. 
\end{proof} 

Theorem \ref{TheoInvInt} will now be proven once we can produce a family of invariant states $\phi_{kk}$ as in Lemma \ref{LemRedCorn}. For this, one can follow the proof as in \cite{MVD1} for the existence of an invariant state on a compact quantum group.

\begin{Prop}\label{PropInvPart} For each $k\in I$, there exists a state $\phi_{kk}$ in $\Gr{B}{k}{k}{k}{k}$ such that, for any state $\omega \in \Gr{B}{k}{k}{k}{k}$, one has \[\omega*\phi_{kk} =\phi_{kk} = \phi_{kk}*\omega.\]
\end{Prop}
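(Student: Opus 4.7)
The approach is to apply the Cesàro-averaging argument of Maes--Van Daele \cite{MVD1} in the corner $A_k := \UnitC{k}{k}\,A\,\UnitC{k}{k}$. By orthogonality of the $\UnitC{k}{l}$'s, $A_k$ is a unital C$^*$-algebra with unit $\UnitC{k}{k}$, and states in $\Gr{B}{k}{k}{k}{k}$ correspond precisely to states on $A_k$. A first observation is that these states form a convolution semigroup: by (U2) and orthogonality, $(\omega_1 * \omega_2)(\UnitC{k}{k}) = \sum_m \omega_1(\UnitC{k}{m})\omega_2(\UnitC{m}{k}) = \omega_1(\UnitC{k}{k})\omega_2(\UnitC{k}{k}) = 1$, since $\omega_i$ kills $\UnitC{k}{m}$ for $m \neq k$.

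Pick any state $\omega_0 \in \Gr{B}{k}{k}{k}{k}$, which exists because $\UnitC{k}{k}\neq 0$ by (U3). Form the Cesàro averages $\omega_n := \tfrac{1}{n}\sum_{j=1}^n \omega_0^{*j}$, each a state. By weak-$*$ compactness of the state space of $A_k$, extract a subnet converging weak-$*$ to a state $\phi_{kk}$. The telescoping identity $\omega_0 * \omega_n - \omega_n = \tfrac{1}{n}(\omega_0^{*(n+1)} - \omega_0)$, of norm at most $2/n$, together with weak-$*$ continuity of one-sided convolution---made explicit via $(\omega_0 * \omega)(a) = \omega((\omega_0 \otimes \mathrm{id})\Delta_k(a))$, where $\Delta_k(a) := (\UnitC{k}{k}\otimes\UnitC{k}{k})\Delta(a)(\UnitC{k}{k}\otimes\UnitC{k}{k})$ is the induced coproduct on $A_k$---forces $\omega_0 * \phi_{kk} = \phi_{kk}$ in the limit. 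The symmetric argument yields $\phi_{kk} * \omega_0 = \phi_{kk}$.

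The crux, and main obstacle, is to upgrade this invariance from $\omega_0$ to an arbitrary state $\omega \in \Gr{B}{k}{k}{k}{k}$. Following \cite{MVD1}, I would consider the unital completely positive map $E : A_k \to A_k$, $E(a) := (\mathrm{id}\otimes\phi_{kk})\Delta_k(a)$. Coassociativity of $\Delta$ descends to $\Delta_k$---via the orthogonality identity $(\UnitC{k}{k}\otimes\UnitC{k}{k})\Delta(\UnitC{k}{k})=\UnitC{k}{k}\otimes\UnitC{k}{k}$---and forces $E$ to be idempotent; the Cesàro relation gives $\phi_{kk}\circ E = \phi_{kk}$. One aims to prove $E(a) = \phi_{kk}(a)\UnitC{k}{k}$ for all $a \in A_k$, from which $(\omega*\phi_{kk})(a) = \omega(E(a)) = \phi_{kk}(a)\omega(\UnitC{k}{k}) = \phi_{kk}(a)$, giving $\omega * \phi_{kk} = \phi_{kk}$. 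In the MVD compact quantum group argument, this identification rests on the cancellation axiom $[(A\otimes 1)\Delta(A)] = A \otimes A$; in our setting its role is played by the density condition (D2), equivalently by the separation property of Lemma \ref{LemRefSep}. Combined with a Cauchy--Schwarz estimate for the slice map $E$, this yields $E(a) \in \C\UnitC{k}{k}$, and the symmetric right-handed argument produces $\phi_{kk}*\omega = \phi_{kk}$.
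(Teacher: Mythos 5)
Your opening step (Cesàro averages of a single state $\omega_0$ and a weak-$*$ limit point, giving $\omega_0*\phi_{kk}=\phi_{kk}=\phi_{kk}*\omega_0$) is correct and matches the paper's starting point, but the crux step is where the proposal breaks down. Invariance under the one state you started with does \emph{not} upgrade to invariance under all states: in the commutative case $\UnitC{k}{k}A\UnitC{k}{k}\cong C(\mathscr{G}(k,k))$ with $\mathscr{G}(k,k)$ a compact group, take $\omega_0$ to be evaluation at the unit element; then $\omega_0*\omega_0=\omega_0$, so every Cesàro average, and hence your $\phi_{kk}$, equals $\omega_0$, which is invariant under no other state. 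Consequently the identity you aim for, $E(a)=\phi_{kk}(a)\UnitC{k}{k}$, is simply false for this $\phi_{kk}$, and no Cauchy--Schwarz estimate can rescue it. The missing ingredient is the Maes--Van Daele domination lemma: if $h$ is a state with $\omega*h=h=h*\omega$ and $0\le\rho\le\omega$, then $\rho*h=\rho(1)h$ (and symmetrically). Two further points about how that lemma is proved here: it works with the full coassociative $^*$-homomorphism $\Delta$, not your compressed map $\Delta_k$ --- the latter is completely positive but \emph{not} multiplicative, since cutting by $\UnitC{k}{k}\otimes\UnitC{k}{k}$ discards the components $\UnitC{k}{m}\otimes\UnitC{m}{k}$, $m\neq k$, so the MVD positivity arguments do not transplant verbatim into the corner $A_k$; and the density condition invoked at the decisive moment is (D1), $(A\otimes A)\Delta(1)=[(A\otimes 1)\Delta(A)]$, which allows one to replace $(c\otimes1)\Delta(a)$ by $\UnitC{k}{k}\otimes d$ --- not (D2) or Lemma \ref{LemRefSep}, which enter only afterwards (Lemma \ref{LemRedCorn}) when extending from the diagonal states $\phi_{kk}$ to the full invariant integral.

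Even with the domination lemma, one cannot fix a single $\omega_0$: the paper instead runs over all positive functionals. For positive $\omega\in\Gr{B}{k}{k}{k}{k}$ set $K_\omega=\{h\in\Gr{B}{k}{k}{k}{k}\mid h\textrm{ a state},\ h*\omega=\omega*h=\omega(1)h\}$. These sets are weak-$*$ compact, nonempty by your Cesàro step, and satisfy $K_{\omega_1+\omega_2}\subseteq K_{\omega_1}\cap K_{\omega_2}$ by the domination lemma, so the finite intersection property produces a common element $\phi_{kk}\in\bigcap_\omega K_\omega$, which is the desired state. This two-stage argument (domination plus the compactness/finite-intersection step) is precisely what your proposal leaves out, and without it the proof does not go through.
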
 
\begin{proof} Let $k\in I$, and $\omega$ a state in $\Gr{B}{k}{k}{k}{k}$. By taking a limit of Ces\`{a}ro sums of iterated convolutions of $\omega$ as in \cite[Lemma 4.2]{MVD1},
  there exists a state $h_{kk} \in \Gr{B}{k}{k}{k}{k}$ with \begin{equation}\label{EqLocalInv}\omega *h_{kk}= h_{kk} =
  h_{kk}*\omega.\end{equation}

  Assume now that $\rho\in \Gr{B}{k}{k}{k}{k}$ and $0\leq \rho\leq \omega$. Take $a\in A$. Then the
  beginning of the proof of \cite[Lemma 4.3]{MVD1}, applied with $b= (\id\otimes h_{kk})\Delta(a)$,
  shows that, for all $c\in A$, 
  \begin{equation}\label{EqAuxI3d} (h_{kk}\otimes
    (\rho*h_{kk}))((c\otimes 1)\Delta(a)) = \rho(1) (h_{kk}\otimes h_{kk})((c\otimes
    1)\Delta(a)).\end{equation} 	Indeed, this part of the proof only relies on $\Delta$ being a coassociative $^*$-homomorphism, and $h_{kk}$ and $\omega$ being states satisfying \eqref{EqLocalInv}. 
    
But since now $(A\otimes A)\Delta(1) = [(A\otimes 1)\Delta(A)]$, we may
  replace $(c\otimes 1)\Delta(a)$ with $\UnitC{k}{k}\otimes d$ for $d\in \Gr{A}{k}{k}{k}{k}$. Then
  \eqref{EqAuxId} becomes $(\rho*h_{kk})(d) = \rho(1)h_{kk}(d)$. Hence \[\rho*h_{kk} =
  \rho(1)h_{kk}.\] By symmetry, also $(\rho*h_{kk}) = \rho(1)h_{kk}$.
  
We can now conclude the proof by a compactness argument as in \cite[Theorem 4.4]{MVD1}. Indeed, for $\omega$ a positive functional in $\Gr{B}{k}{k}{k}{k}$, let \[K_{\omega} = \{h\in \Gr{B}{k}{k}{k}{k}\mid h\textrm{ state},h*\omega = \omega*h = \omega(1)h\}.\] Then the $K_{\omega}$ are non-empty compact subsets of $A^*$, with non-trivial finite intersections since $K_{\omega_1+\omega_2}\subseteq K_{\omega_1}\cap K_{\omega_2}$ by the previous paragraph. We can hence take $\phi_{kk}$ as in the statement of the proposition to be an element in the joint intersection of all $K_{\omega}$. 
\end{proof}

\begin{proof}[Proof (of Theorem \ref{TheoInvInt})] We simply combine Proposition \ref{PropInvPart} with Lemma \ref{LemRedCorn} and Lemma \ref{LemLeftisRight}.
\end{proof}

\section{C$^*$-algebraic partial compact matrix pseudogroups}\label{SecPCMPG}

Definition \ref{DefCpcqg} is a generalisation of the most general notion of compact quantum group, as it appears in \cite{Wor98}, see also \cite[Definition 3.4]{MVD1}. In practice however, `atomic' examples are more easily provided by the more restrictive notion of \emph{compact matrix pseudogroup} \cite{W1}. Definition \ref{DefCpcmp} mimics this special case in the partial setting. Before we come to that, we make the following definition. We continue to use the notation introduced in Section \ref{SectionRepSemi} and Definition \ref{Defstproj}.

\begin{Def} Let $\mathscr{G}= (A,\Delta,\{\UnitC{k}{l}\})$ be a C$^*$-algebraic partial compact quantum semigroup. A representation $X$ of $\mathscr{G}$ on an $I$-bigraded Hilbert space $\Hsp$ is called \emph{unitary} if  
 \begin{align} \label{eq:corep-pi}
       X^{*}X= \sum_{n}\rho_{n} \otimes \rho^{\Hsp}_{n} \quad \text{and}
       \quad XX^{*} = \sum_{k} \lambda_{k} \otimes \lambda^{\Hsp}_{k}.
     \end{align}
\end{Def}

\begin{Rems}
\begin{enumerate}
\item Note that the $I^{2}$-grading on $\Hsp$ in condition (Co2) of Definition \ref{def:corepresentation}  is in this case uniquely
determined by $X$.
\item In the classical case, Example \ref{ExaClasRep}, unitarity means that each $\pi(g)$ for $g\in \mathscr{G}(k,m)$ is a unitary $\GrDA{\Hsp}{m}{m}\rightarrow \GrDA{\Hsp}{k}{k}$.
\end{enumerate}
\end{Rems}

\begin{Def} \label{DefCpcmp} We call \emph{C$^*$-algebraic $I$-partial compact matrix pseudogroup} a couple consisting of a C$^*$-algebraic partial compact quantum semigroup and a unitary rcfd representation $X$ on an $I$-bigraded Hilbert space $\Hsp$ such that the following conditions are satisfied:
\begin{enumerate}[(a)]
\item[(U3)] $\UnitC{k}{k}\neq 0$ for all $k$.
\item[(D)] With $\mathscr{A}$ the algebra generated by the $\UnitC{k}{l}$ and the matrix coefficients of all \[\Gr{X}{k}{l}{m}{n} = (\UnitC{k}{m}\otimes 1)X(\UnitC{l}{n}\otimes 1)\in A\otimes B(\GrDA{\Hsp}{m}{n},\GrDA{\Hsp}{k}{l}),\] $\mathscr{A}$ is dense in $A$. 
\item[(A)] There exists a linear, anti-multiplicative map $S: \mathscr{A}\rightarrow \mathscr{A}$ such that $S(\UnitC{k}{l}) = \UnitC{l}{k}$ and \begin{equation}\label{eq:antipode-corep} (S\otimes \id)\Gr{X}{k}{l}{m}{n} = \left(\Gr{X}{m}{n}{k}{l}\right)^*.
\end{equation}
\end{enumerate}
\end{Def}

In the following, we will continue to use the notation $\mathscr{A}$ for the associated dense algebra, and $S$ for the associated `antipode' map.

\begin{Rems}
\begin{enumerate}
\item Note that we assume that $X$ is rcfd, so in particular the $\GrDA{\Hsp}{k}{l}$ are finite dimensional. 
\item Note that this definition is a little stronger than the corresponding definition for compact matrix pseudogroup in \cite{Wor98}, where the generating representation is only assumed to be invertible, and where $\mathscr{A}$ is only assumed to be generated as a \emph{$^*$-algebra} by the matrix coefficients of the representation. In practice however, one can always arrange for the generating representation to be unitary and self-dual, the latter being achieved by taking a direct sum with the dual representation.
  \end{enumerate}
\end{Rems} 

\begin{Theorem}\label{Theopseudoquantum} Let $(\mathscr{G},X)$ define a C$^*$-algebraic $I$-partial compact matrix pseudogroup. Then $\mathscr{G}$ is a C$^*$-algebraic $I$-partial compact quantum group.
\end{Theorem}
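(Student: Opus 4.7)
The plan is to verify the three axioms (U3), (D1), (D2) of Definition \ref{DefCpcqg}. Axiom (U3) is already part of the matrix pseudogroup definition, so the work reduces to establishing the two density conditions (D1) and (D2). I intend to adapt the standard Woronowicz-style argument for compact matrix pseudogroups, using the unitarity of $X$ together with the antipode identity \eqref{eq:antipode-corep}, with the bigrading tracked carefully.

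For (D1), the key computation is, for basis vectors $v,w$ of $\Hsp$ with bigrading labels $(k(\cdot),l(\cdot))$, the pair of identities
\begin{align*}
\sum_u \Delta(X(v,u))\,(1\otimes X(w,u)^*) &= X(v,w)\otimes \lambda_{k(w)},\\
\sum_u (X(u,w)^*\otimes 1)\,\Delta(X(u,v)) &= \rho_{l(w)}\otimes X(w,v),
\end{align*}
obtained by plugging $(\Delta\otimes\id)X=X_{13}X_{23}$ into the left side and collapsing the inner sums via $XX^*=\sum_k\lambda_k\otimes\lambda_k^\Hsp$ and $X^*X=\sum_n\rho_n\otimes\rho_n^\Hsp$ respectively. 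These place $X(v,w)\otimes\lambda_{k(w)}\in[\Delta(\mathscr{A})(1\otimes\mathscr{A})]$ and $\rho_{l(w)}\otimes X(w,v)\in[(\mathscr{A}\otimes 1)\Delta(\mathscr{A})]$. Multiplying by arbitrary elements $c\in A$ from the unrestricted side then gives $c\otimes X(w,v)\in[(A\otimes 1)\Delta(\mathscr{A})]$ (and its mirror), once $c$ has its right-projection second-index compatible with the left-projection first-index of $X(w,v)$. Repeating with tensor powers $X^{\Circt n}$, which are again unitary rcfd representations satisfying $(S^{\otimes}\otimes\id)(X^{\Circt n})=(X^{\Circt n})^*$ via anti-multiplicativity of $S$, extends this to $c\otimes d$ with $d$ a product of matrix coefficients, and taking adjoints (using $S(X(v,w))=X(w,v)^*$) covers $\mathscr{A}$ in the second factor. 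Since $\mathscr{A}$ is norm-dense in $A$ by axiom (D), we obtain $(A\otimes A)\Delta(1)\subseteq[(A\otimes 1)\Delta(A)]$, and symmetrically for $[(1\otimes A)\Delta(A)]$; the reverse inclusions are automatic.

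For (D2), the observation is that for $v,w$ in the \emph{same} bigraded component $\GrDA{\Hsp}{k}{k'}$, condition (Co2) forces $X(v,w)\in\UnitC{k}{k}A\UnitC{k'}{k'}\subseteq A_P$. Then
$$(\omega\otimes\id)\Delta(X(v,w))=\sum_u\omega(X(v,u))\,X(u,w),$$
and by Hahn-Banach the linear span over $\omega\in A^*$ yields all of $\spann\{X(u,w):u\ \text{a basis vector of }\Hsp\}$. Letting $v,w$ range over all homogeneous components with $v,w$ in the same component, and repeating the argument with the tensor powers $X^{\Circt n}$ (whose diagonal matrix coefficients are products of matrix coefficients of $X$ and land in $A_P$), we generate arbitrary products of entries of $X$; the antipode identity supplies their adjoints. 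The resulting collection spans $\mathscr{A}$, hence is norm-dense in $A$ by axiom (D). The mirror argument with $(\id\otimes\omega)$ is identical.

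The main obstacle will be the bookkeeping: verifying that the tensor powers $X^{\Circt n}$ genuinely satisfy the unitarity and antipode identity blockwise in the partial setting, and ensuring that their diagonal matrix coefficients are numerous enough that slicing recovers \emph{all} of $\mathscr{A}$, not just the coefficients of $X$ itself. This relies essentially on the rcfd assumption (so that the relevant sums converge strictly and $X^{\Circt n}$ is again rcfd unitary) and on axiom (D) providing $\mathscr{A}$ as the algebra generated by coefficients of $X$; once these are in place, the adaptation from \cite{Wor98} is essentially mechanical.
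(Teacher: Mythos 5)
Your treatment of (D1) is essentially the paper's own argument (the identities you display are exactly the computation the paper performs with the unitary tensor powers $X^{\Circt n}$, whose matrix coefficients together with the $\UnitC{k}{l}$ densely span $A$), so that half is fine. The gap is in (D2), at the step ``by Hahn--Banach the linear span over $\omega\in A^*$ yields all of $\spann\{X(u,w)\}$''. A functional $f\in A^*$ annihilates all slices $(\omega\otimes\id)\Delta(X(v,w))=\sum_u\omega(X(v,u))X(u,w)$ if and only if $\sum_u f(X(u,w))X(v,u)=0$ in $M(A)$, and this does not force $f(X(u,w))=0$ for all $u$: the family $\{X(v,u)\}_u$ may contain zeros or be linearly dependent. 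The example of Remark \ref{RemRaum} shows this is fatal: take $A=C(\mathscr{G})$ for the category with objects $\{1,2\}$, two identities and one arrow $f$ from $1$ to $2$, with the two-dimensional unitary representation $X$ on $\Hsp$ given by $\GrDA{\Hsp}{1}{1}=\C e_1$, $\GrDA{\Hsp}{2}{2}=\C e_2$, $\pi(\id_i)=1$, $\pi(f)e_2=e_1$. Then (U3), (D), unitarity \eqref{eq:corep-pi}, (Co1), (Co2) and (D1) all hold, and $X(e_2,e_2)=\delta_{\id_2}\in A_P$, $X(e_1,e_2)=\delta_f$, but $X(e_2,e_1)=0$; hence every slice $(\omega\otimes\id)\Delta(X(e_2,e_2))=\omega(\delta_{\id_2})\delta_{\id_2}$ is a multiple of $\delta_{\id_2}$, the coefficient $\delta_f$ is never reached, and indeed (D2) genuinely fails (consequently no antipode as in (A) can exist for this example). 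Since your (D2) argument invokes the antipode only ``to supply adjoints'' (which here come for free, the coefficients being self-adjoint), it would apply verbatim to this example and prove a false statement: the antipode must enter the (D2) step in an essential way, and as written your argument does not make it do so.

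This is precisely how the paper proceeds: instead of a direct spanning argument, it reduces (D2) via Remark \ref{RemEquiv} to the convolution-nondegeneracy statement of Lemma \ref{LemRefSep}, which it proves using $S$ and positivity. Given $0\neq\omega\in\Gr{B}{k}{m}{l}{n}$, one sets $\omega^*(a)=\overline{\omega(S(a)^*)}$, picks a tensor power $Y=X^{\Circt n}$ with $\pi_Y(\omega)\neq 0$ (possible since the coefficients of the $X^{\Circt n}$ densely span $A$), uses \eqref{eq:antipode-corep} to get $\pi_Y(\omega)^*=(\omega^*\otimes\id)\Gr{Y}{m}{n}{k}{l}$, chooses by the rcfd property a $\chi\in\Gr{B}{m}{k}{n}{l}$ agreeing with $\omega^*$ on the coefficients of $Y$, and concludes $\pi_Y(\chi*\omega)=\pi_Y(\omega)^*\pi_Y(\omega)\neq 0$, so $\chi*\omega\neq0$ (and similarly $\omega*\chi\neq0$); Hahn--Banach then yields (D2). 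If you want to keep a direct slicing argument, you must feed the relation $(S\otimes\id)\Gr{X}{k}{l}{m}{n}=\left(\Gr{X}{m}{n}{k}{l}\right)^*$ into the computation itself --- it is exactly what rules out the asymmetry $X(e_2,e_1)=0$, $X(e_1,e_2)\neq0$ above --- since (D1), unitarity and (D) alone do not imply (D2).
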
 

We need some preparations.

\begin{Lem} Assume that $X,Y$ are unitary rcfd representations of the C$^*$-algebraic partial compact quantum semigroup $(A,\Delta,\{\UnitC{k}{l}\})$ on respective $I$-bigraded Hilbert spaces $\Hsp$ and $\mathcal{K}$. Then also $X \Circt Y$ is a unitary rcfd representation.
\end{Lem}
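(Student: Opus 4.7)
My plan is to verify the remaining claims (rcfd-ness and the two unitarity identities), since the representation property of $X \Circt Y$ was already checked in Section~\ref{SectionRepSemi}. For rcfd, I would observe that
\[
\lambda_k^{\Hsp \itimes \mathcal{K}} \;=\; \sum_{l} p_{kl}^{\Hsp} \otimes \lambda_l^{\mathcal{K}};
\]
only finitely many $l$ contribute because $\lambda_k^{\Hsp}$ is finite rank by the rcfd assumption on $X$, and each nonzero summand is then a tensor of finite-rank projections thanks to rcfd of $Y$. The argument for $\rho_m^{\Hsp \itimes \mathcal{K}}$ is symmetric.

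For unitarity, my approach would be a direct computation. First I would derive from (Co2), by summing it over one free index, absorption-type relations such as
\[
(\rho_n \otimes 1)\,Y \;=\; Y\,(1 \otimes \lambda_n^{\mathcal{K}}),
\qquad
X\,(\lambda_l \otimes 1) \;=\; (1 \otimes \rho_l^{\Hsp})\,X,
\]
together with their adjoint versions. Then, starting from $(X \Circt Y)^*(X \Circt Y) = Y_{13}^* X_{12}^* X_{12} Y_{13}$, I would substitute $X^*X = \sum_n \rho_n \otimes \rho_n^{\Hsp}$, commute $(\rho_n)_1$ across $Y_{13}$ to convert it to $(\lambda_n^{\mathcal{K}})_3$, and then collapse $Y^*Y = \sum_m \rho_m \otimes \rho_m^{\mathcal{K}}$, using $\rho_m^{\mathcal{K}}\lambda_n^{\mathcal{K}} = p_{nm}^{\mathcal{K}}$. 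This should land on $\sum_{n,m}\rho_m \otimes \rho_n^{\Hsp} \otimes p_{nm}^{\mathcal{K}}$, which a short index check would identify with $\sum_m \rho_m \otimes \rho_m^{\Hsp \itimes \mathcal{K}}$ via the decomposition $\rho_m^{\Hsp \itimes \mathcal{K}} = \sum_n \rho_n^{\Hsp} \otimes p_{nm}^{\mathcal{K}}$. The dual identity $(X \Circt Y)(X \Circt Y)^* = \sum_k \lambda_k \otimes \lambda_k^{\Hsp \itimes \mathcal{K}}$ would be handled in parallel fashion: starting from $YY^* = \sum_k \lambda_k \otimes \lambda_k^{\mathcal{K}}$, pulling $(\lambda_k)_1$ through $X_{12}$ using the second absorption relation above, and collapsing via $XX^* = \sum_j \lambda_j \otimes \lambda_j^{\Hsp}$.

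The main obstacle I foresee is purely bookkeeping: staying consistent about which leg hosts which of the $\lambda$- and $\rho$-projections, and keeping track of strict convergence of the infinite sums as they are manipulated in the multiplier algebra. A minor subsidiary point is that the intermediate computation lives naturally in $M(A \otimes B_0(\Hsp \otimes \mathcal{K}))$, whereas $X \Circt Y$ sits in $M(A \otimes B_0(\Hsp \itimes \mathcal{K}))$; however, the absorption relations force the final expression to be automatically invariant under the projection $P = \sum_l \rho_l^{\Hsp} \otimes \lambda_l^{\mathcal{K}}$ in the second and third legs, so the restriction to $\Hsp \itimes \mathcal{K}$ is immediate and needs no further argument.
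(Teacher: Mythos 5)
Your proposal is correct and follows essentially the same route as the paper, whose entire proof is ``Immediate, using $(\rho_m\otimes 1)Y = Y(1\otimes \lambda^{\mathcal{K}}_m)$'': you derive exactly this absorption relation (and its companion $X(\lambda_l\otimes 1) = (1\otimes\rho^{\Hsp}_l)X$) from (Co2) and carry out the computation the paper leaves implicit. The explicit decompositions $\lambda^{\Hsp\itimes\mathcal{K}}_k = \sum_l p^{\Hsp}_{kl}\otimes\lambda^{\mathcal{K}}_l$ and $\rho^{\Hsp\itimes\mathcal{K}}_m = \sum_l \rho^{\Hsp}_l\otimes p^{\mathcal{K}}_{lm}$, and the handling of the projection onto $\Hsp\itimes\mathcal{K}$, are just the bookkeeping hidden behind the paper's ``immediate''.
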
 
\begin{proof} Immediate, using $(\rho_m\otimes 1)Y =Y(1\otimes \lambda_m^{\mathcal{K}})$. 
\end{proof} 

\begin{Lem} Let $\mathscr{G} =  (A,\Delta,\{\UnitC{k}{l}\})$ be a C$^*$-algebraic partial compact matrix pseudogroup. Assume $X,Y$ are rcfd representations satisfying \eqref{eq:antipode-corep}. Then also $X\Circt Y$ satisfies \eqref{eq:antipode-corep}. 
\end{Lem}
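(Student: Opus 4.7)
The plan is to reduce the claim to a ``matrix multiplication'' formula for the blocks of $X\Circt Y$ and then apply $S\otimes\id$ term by term, exploiting the anti-multiplicativity of $S$.

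First I would establish the identity
\begin{equation}\label{eq:blockmult}
\Gr{(X\Circt Y)}{k}{l}{m}{n} \;=\; \sum_{p,q}\Gr{X}{k}{p}{m}{q}_{12}\,\Gr{Y}{p}{l}{q}{n}_{13},
\end{equation}
where the sum is finite because $X$ and $Y$ are rcfd. To derive this, I would decompose $X=\sum_{a,b,c,d}\Gr{X}{a}{b}{c}{d}$ and $Y=\sum_{e,f,g,h}\Gr{Y}{e}{f}{g}{h}$ (strictly convergent), substitute into $X_{12}Y_{13}$, and cut down by $(\UnitC{k}{m}\otimes 1\otimes 1)(\,\cdot\,)(\UnitC{l}{n}\otimes 1\otimes 1)$. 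Orthogonality of the $\UnitC{\cdot}{\cdot}$ forces $a=k,c=m$ on the left, $f=l,h=n$ on the right, and the first-leg product $\UnitC{b}{d}\UnitC{e}{g}$ survives only when $b=e$ and $d=g$. Setting $p=b=e$ and $q=d=g$ yields \eqref{eq:blockmult}.

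Next I apply $S\otimes\id$ to \eqref{eq:blockmult}. The matrix coefficients of each block of $X$ and $Y$ lie in $\mathscr{A}$, so $S$ is defined on the $A$-leg. Writing $\Gr{X}{k}{p}{m}{q}=\sum_i a_i\otimes T_i$ and $\Gr{Y}{p}{l}{q}{n}=\sum_j b_j\otimes U_j$, anti-multiplicativity of $S$ yields
\begin{equation*}
(S\otimes\id)\bigl(\Gr{X}{k}{p}{m}{q}_{12}\Gr{Y}{p}{l}{q}{n}_{13}\bigr)
=\sum_{i,j}S(b_j)S(a_i)\otimes T_i\otimes U_j
=\bigl((S\otimes\id)\Gr{Y}{p}{l}{q}{n}\bigr)_{13}\bigl((S\otimes\id)\Gr{X}{k}{p}{m}{q}\bigr)_{12}.
\end{equation*}
Applying the hypothesis \eqref{eq:antipode-corep} separately to $X$ and $Y$ converts this into $\bigl(\Gr{Y}{q}{n}{p}{l}\bigr)^{*}_{13}\bigl(\Gr{X}{m}{q}{k}{p}\bigr)^{*}_{12}$.

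Finally I sum over $p,q$ and re-index, setting $p'=q$, $q'=p$. The resulting expression is
\begin{equation*}
\sum_{p',q'}\bigl(\Gr{Y}{p'}{n}{q'}{l}\bigr)^{*}_{13}\bigl(\Gr{X}{m}{p'}{k}{q'}\bigr)^{*}_{12}
=\Bigl(\sum_{p',q'}\Gr{X}{m}{p'}{k}{q'}_{12}\Gr{Y}{p'}{n}{q'}{l}_{13}\Bigr)^{\!*}
=\bigl(\Gr{(X\Circt Y)}{m}{n}{k}{l}\bigr)^{*},
\end{equation*}
where the last equality uses \eqref{eq:blockmult} again with $(k,l,m,n)$ replaced by $(m,n,k,l)$. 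This is exactly \eqref{eq:antipode-corep} for $X\Circt Y$.

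The only real obstacle is the bookkeeping: keeping track of the four indices $(k,l,m,n)$ on each block, how the leg numbering interacts with anti-multiplicativity, and performing the swap of summation indices to recognize the block formula on the other side. Everything else is immediate once \eqref{eq:blockmult} is in place.
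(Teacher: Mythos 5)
Your proof is correct and follows exactly the route the paper intends: the paper's own proof is the one-line remark that the claim is ``immediate by the anti-multiplicativity of $S$'', and your argument is precisely that observation with the block-decomposition bookkeeping $(\UnitC{k}{m}\otimes 1)X_{12}Y_{13}(\UnitC{l}{n}\otimes 1)=\sum_{p,q}\bigl(\Gr{X}{k}{p}{m}{q}\bigr)_{12}\bigl(\Gr{Y}{p}{l}{q}{n}\bigr)_{13}$ (finite by rcfd) written out explicitly. Nothing is missing; your index swap and use of the hypothesis on each factor recover exactly the required identity for $X\Circt Y$.
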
 

\begin{proof} Immediate by the anti-multiplicativity of $S$. 
\end{proof}

\begin{Lem} Let $\mathscr{G} = (A,\Delta,\{\UnitC{k}{l}\})$ be a C$^*$-algebraic partial compact quantum semigroup, and $X$ a representation of $\mathscr{G}$ on an $I$-bigraded Hilbert space $\Hsp$. Then \[\pi_X: A^*\rightarrow B(\Hsp),\quad\omega \mapsto (\omega\otimes \id)X\] is a representation of $A^*$ with respect to the convolution product. 
\end{Lem}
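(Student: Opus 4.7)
The plan is a direct unpacking of the corepresentation condition (Co1), combined with the leg-numbering calculus for slice maps. Linearity of $\pi_X$ is immediate from linearity of the slice map $\omega \mapsto (\omega \otimes \id)(X)$, so the content of the lemma is the multiplicativity statement $\pi_X(\chi * \omega) = \pi_X(\chi)\,\pi_X(\omega)$ for all $\chi,\omega \in A^*$.

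First I would expand the left-hand side via the definitions. By the definition of convolution and the strict extension of $\Delta$ to $M(A)$,
\begin{equation*}
  \pi_X(\chi * \omega) \;=\; \bigl((\chi * \omega) \otimes \id\bigr)(X) \;=\; \bigl(\chi \otimes \omega \otimes \id\bigr)\bigl((\Delta \otimes \id)(X)\bigr),
\end{equation*}
where $\Delta \otimes \id$ denotes the strictly continuous extension to $M(A \otimes B_0(\Hsp))$ appearing in (Co1). Applying (Co1) then yields
\begin{equation*}
  \pi_X(\chi * \omega) \;=\; \bigl(\chi \otimes \omega \otimes \id\bigr)(X_{13} X_{23}) \;\in\; B(\Hsp).
\end{equation*}

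Next I would split the slice map using the leg-numbering structure: $X_{13}$ lives in legs $1$ and $3$, and $X_{23}$ in legs $2$ and $3$, so their legs acted on by $\chi$ and $\omega$ respectively are disjoint. A standard computation with slice maps on multiplier algebras then gives
\begin{equation*}
  \bigl(\chi \otimes \omega \otimes \id\bigr)(X_{13} X_{23}) \;=\; \bigl((\chi \otimes \id)(X)\bigr)\bigl((\omega \otimes \id)(X)\bigr) \;=\; \pi_X(\chi)\,\pi_X(\omega),
\end{equation*}
which is the desired identity.

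The only genuinely technical point, and the place that deserves a sentence of care, is the justification of this leg-numbering manipulation when $X$ is only in $M(A \otimes B_0(\Hsp))$ rather than in $A \otimes B_0(\Hsp)$. This uses the fact that the slice maps $\omega \otimes \id$ and $\chi \otimes \id$ extend strictly-continuously to the multiplier algebra and commute with composition on disjoint legs; equivalently, one can verify the identity on elementary tensors $X_{13}X_{23}$ with $X=\sum a_i\otimes T_i$ truncated and then pass to the multiplier limit by strict continuity. No use is made of the density or antipode axioms beyond (Co1), so this lemma holds at the level of C$^*$-algebraic partial compact quantum semigroups as stated.
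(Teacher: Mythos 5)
Your argument is correct and is exactly the one the paper has in mind: the paper's proof is the single line ``Immediate by the representation property of $X$,'' i.e.\ the computation $(\chi*\omega\otimes\id)(X)=(\chi\otimes\omega\otimes\id)(X_{13}X_{23})=\pi_X(\chi)\pi_X(\omega)$ that you spell out from (Co1). Your extra remark on strict continuity of the slice maps on $M(A\otimes B_0(\Hsp))$ is the right justification for the leg-numbering step and adds nothing beyond what the paper tacitly assumes.
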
 
\begin{proof} Immediate by the representation property of $X$. 
\end{proof}

\begin{proof}[Proof (of Theorem \ref{Theopseudoquantum})] We have to prove that the density conditions (D1) and (D2) are satisfied.

Let $n\geq 0$ and $Y = X^{\Circt n}$, where $X^{\Circt 0}$ is considered to be the (obviously unitary) trivial representation, see Example \ref{exa:corep-trivial}. Let $\Hsp_Y = \itimes^n \Hsp$, and choose orthonormal bases $\{e_i\}$ for the components of $\Hsp_Y$. Let $\left(\Gr{Y}{k}{l}{m}{n}\right)_{ij}\in \UnitC{k}{m}A\UnitC{l}{n}$ be the corresponding matrix coefficients. As $Y$ is a unitary representation, we find that, in the strict topology, \begin{eqnarray*} \sum_{p,q,g} \Delta\left(\left(\Gr{Y}{k}{l}{p}{q}\right)_{ig}\right)\left(1\otimes \left(\Gr{Y}{m}{n}{p}{q}\right)_{jg}^*\right) &=&\underset{r,s,h}{\sum_{p,q,g}} \left(\Gr{Y}{k}{l}{r}{s}\right)_{ih}\otimes \left(\Gr{Y}{r}{s}{p}{q}\right)_{hg}\left(\Gr{Y}{m}{n}{p}{q}\right)_{jg}^* 
\\  &=&\left(\Gr{Y}{k}{l}{m}{n}\right)_{ij}\otimes \lambda_m \\&=& \Delta(1)\left(\left(\Gr{Y}{k}{l}{m}{n}\right)_{ij}\otimes 1\right).\end{eqnarray*} Multiplying with $1\otimes \UnitC{m}{p}$, we find \[\sum_{q,g} \Delta\left(\left(\Gr{Y}{k}{l}{p}{q}\right)_{ig}\right)\left(1\otimes \left(\Gr{Y}{m}{n}{p}{q}\right)_{jg}^*\right)= \Delta(1)\left(\left(\Gr{Y}{k}{l}{m}{n}\right)_{ij}\otimes \UnitC{m}{p}\right),\] where now the left hand side is a finite sum by the rcfd condition.

Since $A$ is by definition densily spanned by the matrix coefficients of all $X^{\Circt n}$, it follows that \[\Delta(1)(A\otimes A) = \lbrack \Delta(A)(1\otimes A)\rbrack.\] In a similar way, the other density condition in (D1) is satisfied. 

To verify (D2), it is, by Remark, \ref{RemEquiv} sufficient to check that the conclusion of Lemma \ref{LemRefSep} is satisfied. But take $\omega \in \Gr{B}{k}{m}{l}{n}$ non-zero. Let \[\omega^*(a) = \overline{\omega(S(a)^*)},\qquad a\in\mathscr{A}.\] Then the defining property of $S$ shows that, for any $Y = X^{\Circt n}$, the associated representation $\pi_{Y}$ of $A^*$ on $\Hsp_Y$ satisfies \[\pi_Y(\omega)^*  = (\omega^*\otimes \id)\Gr{Y}{m}{n}{k}{l}.\] As $A$ is densily spanned by the algebra generated by the matrix coefficients of $X$, it follows that we can take $Y$ such that $\pi_Y(\omega) \neq 0$. Choose then $\chi \in \Gr{B}{m}{k}{n}{l}$ such that $\chi = \omega^*$ on the matrix coefficients of $Y$, which is possible since $Y$ is rcfd. It then follows that \[\pi_Y(\chi*\omega) = \pi_Y(\chi)\pi_Y(\omega) = \pi_Y(\omega)^*\pi_Y(\omega) \neq0.\] Hence $\chi*\omega \neq 0$. By the same argument, $\omega*\chi\neq 0$. It follows that the conclusion of Lemma \ref{LemRefSep} holds.
\end{proof}

\section{A general construction method}

C$^*$-algebraic partial compact matrix pseudogroups can be easily created from algebraic data as follows. 

Note first that the definition of a C$^*$-algebraic $I$-partial compact quantum semigroup still makes sense if $A$ is replaced by a general $^*$-algebra $\mathscr{A}$, once one interprets 
\begin{itemize}
\item `strict convergence of $\sum_{k,l} \UnitC{k}{l}$' as `$\mathscr{A}$ is spanned by its parts $\UnitC{k}{l}\mathscr{A}$'.
\item `$\Delta: A\rightarrow M(A\otimes A)$' as `$\Delta: \mathscr{A}\rightarrow M(\mathscr{A}\underset{\alg}{\otimes} \mathscr{A})$'.
\end{itemize}

The coassociativity condition on $\Delta$ can be made sense of, as one now has the equality $\Delta(\mathscr{A})(\mathscr{A}\otimes \mathscr{A}) = \Delta(1)(\mathscr{A}\otimes \mathscr{A})$, so there is a unique `continuous' extension of (for example) $\Delta$ to the multiplier $^*$-algebra $M(\mathscr{A})$, such that $\Delta$ sends the unit to $\Delta(1)$.

We will call the above algebraic structures \emph{$^*$-algebraic $I$-partial compact quantum semigroups}. Note that also the elements $\lambda_r$ and $\rho_r$ of Definition \ref{Defstproj} still make sense inside $M(\mathscr{A})$. We will further write \[\Gr{\mathscr{A}}{k}{l}{m}{n} = \UnitC{k}{m}\mathscr{A}\UnitC{l}{n}.\]

Also the notion of an rcfd representation still makes sense for $^*$-algebraic partial compact quantum semigroups. To avoid awkward technicalities, we rephrase the definition in the following form. First, for $r,s\in I$, let us write \[\Delta_{rs}(a) = (\rho_r\otimes 1)\Delta(a)(\rho_s\otimes 1) = (1\otimes \lambda_r)\Delta(a)(1\otimes \lambda_s)\in \mathscr{A}\aotimes \mathscr{A},\] which is indeed an element in the algebraic tensor product since one may assume for example $a\in \Gr{\mathscr{A}}{k}{l}{m}{n}$. 

\begin{Def} An \emph{rcfd representation} of a $^*$-algebraic partial compact quantum semigroup $(\mathscr{A},\Delta,\{\UnitC{k}{l}\})$ consists of an rcfd $I$-bigraded Hilbert space $\Hsp$ and elements \[\Gr{X}{k}{l}{m}{n} \in \Gr{\mathscr{A}}{k}{l}{m}{n}\otimes B(\GrDA{\Hsp}{m}{n},\GrDA{\Hsp}{k}{l})\] such that \[(\Delta_{rs}\otimes \id)(\Gr{X}{k}{l}{m}{n}) = \left(\Gr{X}{k}{l}{r}{s}\right)_{13}\left(\Gr{X}{r}{s}{m}{n}\right)_{23}.\]
It is called \emph{unitary} if \[\sum_k \left(\Gr{X}{k}{l}{m}{n'}\right)^*\Gr{X}{k}{l}{m}{n} = \delta_{n,n'} \UnitC{l}{n}\otimes \id_{\GrDA{\Hsp}{m}{n}},\]\[ \sum_{n} \Gr{X}{k}{l}{m}{n}\left(\Gr{X}{k'}{l}{m}{n}\right)^* = \delta_{k,k'} \UnitC{k}{m}\otimes \id_{\GrDA{\Hsp}{k}{l}}.\]
\end{Def} 

Note that the sums in the unitarity condition are in fact finite, by the rcfd condition.

As for C$^*$-algebraic partial compact quantum semigroups, one can define tensor products of (unitary) rcfd representations of $^*$-pcqsg.

The following definition is now obvious. 

\begin{Def} \label{Defalgpcmp} We call \emph{$^*$-algebraic $I$-partial compact matrix pseudogroup} a couple consisting of a $^*$-algebraic partial compact quantum semigroup and a unitary rcfd representation $X$ on an $I$-bigraded Hilbert space $\Hsp$ such that the following conditions are satisfied:
\begin{enumerate}[(a)]
\item[(U3)] $\UnitC{k}{k}\neq 0$ for all $k$.
\item[(G)] $\mathscr{A}$ is generated as an algebra by the $\UnitC{k}{l}$ and the matrix coefficients of the $\Gr{X}{k}{l}{m}{n}$.
\item[(A)] there exists a linear, anti-multiplicative map $S: \mathscr{A}\rightarrow \mathscr{A}$ such that $S(\UnitC{k}{l}) = \UnitC{l}{k}$ and \begin{equation}\label{eq:antipode-corepalg} (S\otimes \id)\Gr{X}{k}{l}{m}{n} = \left(\Gr{X}{m}{n}{k}{l}\right)^*.
\end{equation}
\end{enumerate}
\end{Def}  

\begin{Theorem} Let $(\mathscr{A},\Delta,\{\UnitC{k}{l}\})$ be a $^*$-algebraic $I$-partial compact matrix pseudogroup with generating unitary representation $X$ on an $I$-bigraded Hilbert space $\Hsp$. Then for each $a\in \mathscr{A}$, there exists $M_a\geq 0$ such that $\|\pi(a)\|\leq M_a$ for all $^*$-homomorphisms $\pi: \mathscr{A}\rightarrow B(\mathcal{K})$, $\mathcal{K}$ a Hilbert space.
\end{Theorem}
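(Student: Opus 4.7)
The plan is to exhibit, for each generator of $\mathscr{A}$ in the sense of hypothesis (G), a universal norm bound of $1$ valid for every $^*$-homomorphism $\pi:\mathscr{A}\to B(\mathcal{K})$, and then use the algebra-generation assumption to pass from generators to an arbitrary element of $\mathscr{A}$ via a combinatorial sum depending only on $a$.

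For the $\UnitC{k}{l}$ this bound is automatic: each is a self-adjoint idempotent in $\mathscr{A}$, so $\pi(\UnitC{k}{l})$ is a projection in $B(\mathcal{K})$ and hence has norm at most $1$. For a matrix coefficient $(\Gr{X}{k}{l}{m}{n})_{ij}$ I would fix orthonormal bases of the finite-dimensional spaces $\GrDA{\Hsp}{k}{l}$ and $\GrDA{\Hsp}{m}{n}$ (finite dimensionality comes from the rcfd hypothesis). Applying $\pi\otimes\id$ to the first unitarity relation in the case $n=n'$ yields
\[
\sum_k (\pi\otimes\id)(\Gr{X}{k}{l}{m}{n})^{*}\,(\pi\otimes\id)(\Gr{X}{k}{l}{m}{n}) \;=\; \pi(\UnitC{l}{n})\otimes \id_{\GrDA{\Hsp}{m}{n}},
\]
where the sum is finite by the rcfd condition and the right-hand side is a projection of norm at most $1$. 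Each term on the left is a positive operator dominated by this projection, so $\|(\pi\otimes\id)(\Gr{X}{k}{l}{m}{n})\|^{2}\le 1$, and since $\pi((\Gr{X}{k}{l}{m}{n})_{ij})$ is one matrix block of $(\pi\otimes\id)(\Gr{X}{k}{l}{m}{n})$ with respect to the chosen bases, $\|\pi((\Gr{X}{k}{l}{m}{n})_{ij})\|\le 1$.

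Finally, by hypothesis (G) each $a\in\mathscr{A}$ admits a (not necessarily unique) expression $a = \sum_\alpha c_\alpha\, g_{\alpha,1}\cdots g_{\alpha,n_\alpha}$ with $c_\alpha\in\C$ and each $g_{\alpha,i}$ one of the generators above. Fixing such an expression and applying $\pi$, submultiplicativity of the operator norm gives
\[
\|\pi(a)\|\;\le\; \sum_\alpha |c_\alpha|\prod_{i}\|\pi(g_{\alpha,i})\|\;\le\; \sum_\alpha|c_\alpha|\;=:\;M_a,
\]
a constant that depends only on the fixed expression of $a$, not on $\pi$, which is exactly the required bound.

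The main obstacle is the second step: one must read the unitarity axioms as an operator inequality after applying $\pi\otimes\id$, and translate the norm bound on the resulting block-matrix-valued element into a uniform bound on each individual matrix coefficient via the chosen finite bases. The antipode hypothesis (A) plays no role in this proof; it will be needed in the next step, where one completes $\mathscr{A}$ to a C$^{*}$-algebra and verifies that the resulting structure is a C$^{*}$-algebraic partial compact matrix pseudogroup.
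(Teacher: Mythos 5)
Your proof is correct and follows essentially the same route as the paper: you bound the matrix coefficients of $X$ by applying $\pi\otimes\id$ to the unitarity relation $\sum_{k}(\Gr{X}{k}{l}{m}{n})^{*}\Gr{X}{k}{l}{m}{n}=\UnitC{l}{n}\otimes\id$, deduce that each block is a contraction, and then pass to general $a\in\mathscr{A}$ through a fixed expression in the generators provided by condition (G). The only difference is cosmetic (the paper phrases the coefficients as slices $(\id\otimes\omega_{\xi,\eta})(\Gr{X}{k}{l}{m}{n})$ and gets the bound $\|\xi\|\,\|\eta\|$, while you normalize with orthonormal bases), and your remark that (A) is not needed here matches the paper.
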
 

\begin{proof}  Let $\pi: \mathscr{A}\rightarrow B(\mathcal{K})$ be a $^*$-representation on a Hilbert space $\mathcal{K}$. 

By the generating condition, it suffices to prove that $\|\pi(a)\|$ is bounded independently of $\pi$ for  $a$ of the form \[a=(\id \otimes \omega_{\xi,\eta})(\Gr{X}{k}{l}{m}{n}),\]
where $\xi\in \Gru{\Hsp}{k}{l}$, $\eta\in
\Gru{\Hsp}{m}{n}$.  However, by unitarity we have  \begin{equation}\label{EqUnitary}
    \sum_{p}(\Gr{X}{p}{l}{m}{n})^{*} \Gr{X}{p}{l}{m}{n}  = \UnitC{l}{n}
    \otimes \id_{\Gru{\Hsp}{m}{n}}.
  \end{equation}

As $\pi(\UnitC{l}{n})$ is a self-adjoint projection, it follows that each $(\pi\otimes \id)(\Gr{X}{p}{l}{m}{n})$ is a contraction in $B(\mathcal{K}\otimes \GrDA{\Hsp}{m}{n},\mathcal{K}\otimes \GrDA{\Hsp}{p}{l})$. We hence conclude that
 \[\|\pi(a)\| \leq \| \xi\| \|\eta\| \|(\pi \otimes \id)(\Gr{X}{k}{l}{m}{n})\| \leq
    \|\xi\|\|\eta\|. \qedhere \]  

\end{proof} 

\begin{Cor}  Let $(\mathscr{A},\Delta,\{\UnitC{k}{l}\})$ be a $^*$-algebraic $I$-partial compact matrix pseudogroup. Then \[\|a\|_u = \sup\{\|\pi(a)\|\mid \pi \textrm{ $^*$-representation of }A\}\] defines a seminorm on $\mathscr{A}$, and, with $J= \{a\in \mathscr{A} \mid \|a\|_u=0\}$, the completion of $\mathscr{A}/J$ with respect to $\|\,\cdot\,\|_u$ is a C$^*$-algebra $A$.
\end{Cor}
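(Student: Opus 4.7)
The plan is to show that $\|\cdot\|_u$ is a well-defined C$^*$-seminorm on $\mathscr{A}$, from which the statement follows by the standard quotient-and-complete construction.

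First, I would observe that the supremum defining $\|a\|_u$ is finite for every $a\in\mathscr{A}$: this is exactly the content of the preceding theorem, which provides a uniform bound $M_a$ on $\|\pi(a)\|$ across all $^*$-representations $\pi$ of $\mathscr{A}$. Next, the seminorm axioms are verified pointwise at each representation and then by passing to the supremum: positive homogeneity is clear; subadditivity $\|a+b\|_u\leq \|a\|_u+\|b\|_u$ follows since $\|\pi(a+b)\|\leq\|\pi(a)\|+\|\pi(b)\|\leq\|a\|_u+\|b\|_u$ for every $\pi$; submultiplicativity $\|ab\|_u\leq\|a\|_u\|b\|_u$ is obtained in the same way; and the $^*$-invariance $\|a^*\|_u=\|a\|_u$ holds because $\pi$ respects the involution so $\|\pi(a^*)\|=\|\pi(a)\|$. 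The C$^*$-identity $\|a^*a\|_u=\|a\|_u^2$ passes from the C$^*$-identity in each $B(\mathcal{K})$ to the supremum in the standard way (use $\|\pi(a^*a)\|=\|\pi(a)\|^2$, take $\sup$ on both sides).

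Next I would put $J=\{a\in\mathscr{A}\mid\|a\|_u=0\}$. Submultiplicativity and $^*$-invariance of $\|\cdot\|_u$ make $J$ a two-sided $^*$-ideal of $\mathscr{A}$, so the quotient $\mathscr{A}/J$ inherits a $^*$-algebra structure on which $\|\cdot\|_u$ descends to a genuine norm $\|\,\cdot\,\|$ satisfying all the C$^*$-axioms including $\|[a]^*[a]\|=\|[a]\|^2$. Thus $(\mathscr{A}/J,\|\cdot\|)$ is a pre-C$^*$-algebra, and its completion $A$ is a C$^*$-algebra: the multiplication and involution extend uniquely by uniform continuity (submultiplicativity and isometry of the involution guarantee this), and the C$^*$-identity holds on the completion by continuity.

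There is essentially no hard step here; the only non-trivial input is the uniform bound $M_a$ supplied by the previous theorem, without which the supremum could be $+\infty$ and $\|\cdot\|_u$ would fail to be a seminorm. Once that boundedness is in hand, the remainder is the standard universal-C$^*$-algebra construction applied to a $^*$-algebra equipped with a bounded family of $^*$-representations.
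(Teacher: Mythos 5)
Your argument is correct and is exactly the standard construction the paper has in mind: the corollary is stated without proof precisely because, once the preceding theorem supplies the uniform bound $M_a$, the quotient-by-null-ideal-and-complete procedure yields the universal C$^*$-envelope in the routine way you describe. No gaps.
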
 

We will call $A$ the \emph{universal C$^*$-envelope} of $\mathscr{A}$, although in general the natural map from $\mathscr{A}$ into $A$ will not be injective!

\begin{Theorem}\label{TheoAlgtoAn} Let $(\mathscr{A},\Delta,\{\UnitC{k}{l}\})$ be a $^*$-algebraic $I$-partial compact matrix pseudogroup with generating representation $X$. Let $A$ be the universal C$^*$-envelope of $\mathscr{A}$, with associated $^*$-homomorphism \[\pi_u: \mathscr{A}\rightarrow A.\] Assume that $\pi_u$ is injective. Then the comultiplication on $\mathscr{A}$ descends and extends to a comultiplication on $A$, making $(A,\Delta,\{\UnitC{k}{l}\})$ into a C$^*$-algebraic $I$-partial compact matrix pseudogroup over $I$ with generating representation $X = \sum_{k,l,m,n} (\pi\otimes \id)\Gr{X}{k}{l}{m}{n}$.
\end{Theorem}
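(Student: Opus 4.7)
My strategy is to extend $(\pi_u\otimes\pi_u)\circ\Delta\colon\mathscr{A}\to M(A\otimes A)$ to a bounded $^*$-homomorphism $\Delta_A\colon A\to M(A\otimes A)$ via the universal property of $A$, and then verify the remaining C$^*$-algebraic axioms by density.

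The crux is a \emph{convolution principle}: for any pair of non-degenerate $^*$-representations $\pi_1,\pi_2\colon\mathscr{A}\to B(\mathcal{K}_i)$, the assignment $a\mapsto(\pi_1\otimes\pi_2)(\Delta(a))$ defines a $^*$-representation $\pi_1*\pi_2$ of $\mathscr{A}$ on $\mathcal{K}_1\otimes\mathcal{K}_2$. Since $\Delta(a)\in M(\mathscr{A}\aotimes\mathscr{A})$ rather than in $\mathscr{A}\aotimes\mathscr{A}$, I would interpret the right-hand side through the decomposition $\Delta(a)=\sum_{r,s}\Delta_{rs}(a)$ with each $\Delta_{rs}(a)\in\mathscr{A}\aotimes\mathscr{A}$, setting
\[
(\pi_1*\pi_2)(a) \;=\; \sum_{r,s}(\pi_1\otimes\pi_2)(\Delta_{rs}(a)),
\]
a sum of operators whose source and range projections $\pi_1(\lambda_r)\otimes\pi_2(\rho_s)$ are mutually orthogonal and, by non-degeneracy of the $\pi_i$, strongly sum to the identities. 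I would verify that partial sums are uniformly bounded by reducing to the generating matrix coefficients $a=(\Gr{X^{\Circt n}}{k}{l}{m}{n})_{ij}$, where the rcfd property of $X$ makes the defining sum of $\Delta(a)$ already \emph{finite}, and that multiplicativity and $^*$-compatibility of $\pi_1*\pi_2$ follow from those of $\Delta$. In particular $\|(\pi_1*\pi_2)(a)\|\le\|a\|_u$.

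Applying this with $\pi_i=\sigma_i\circ\pi_u$ for arbitrary non-degenerate $^*$-representations $\sigma_i\colon A\to B(\mathcal{K}_i)$, together with the characterization $\|x\|_{A\otimes A}=\sup\|(\sigma_1\otimes\sigma_2)(x)\|$ of the minimal tensor product norm, yields for all $a\in\mathscr{A}$ and $y\in\mathscr{A}\aotimes\mathscr{A}$ the bound
\[
\|(\pi_u\otimes\pi_u)(\Delta(a)\cdot y)\|_{A\otimes A}\le\|a\|_u\,\|(\pi_u\otimes\pi_u)(y)\|_{A\otimes A},
\]
together with its right-handed analogue. Density of $\pi_u(\mathscr{A})$ in $A$ gives density of $(\pi_u\otimes\pi_u)(\mathscr{A}\aotimes\mathscr{A})$ in $A\otimes A$, so these bounds extend to left and right multiplier actions producing an element $\tilde\Delta(a)\in M(A\otimes A)$ with $\|\tilde\Delta(a)\|\le\|a\|_u$. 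The map $\tilde\Delta\colon\mathscr{A}\to M(A\otimes A)$ is a $^*$-homomorphism and, by the universal property of $A$, factors uniquely through a bounded $^*$-homomorphism $\Delta_A\colon A\to M(A\otimes A)$.

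The remaining axioms transfer routinely. (U1) follows because every $b\in\mathscr{A}$ is a finite sum $\sum_{(k,l)\in F_b}\UnitC{k}{l}b$, so finite partial sums of the $\UnitC{k}{l}$ converge to the identity on the dense subset $\pi_u(\mathscr{A})\subseteq A$, hence strictly in $M(A)$. (U2) lifts from the $^*$-algebraic identity $\Delta(\UnitC{k}{l})=\sum_m\UnitC{k}{m}\otimes\UnitC{m}{l}$ by strict continuity in $M(A\otimes A)$, and (C) follows from $^*$-algebraic coassociativity by density. For the matrix pseudogroup structure, (U3) comes from injectivity of $\pi_u$, (D) is built into the construction, and (A) holds by restricting the given antipode on $\mathscr{A}$ to $\pi_u(\mathscr{A})$. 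With $X=\sum_{k,l,m,n}(\pi_u\otimes\id)\Gr{X}{k}{l}{m}{n}\in M(A\otimes B_0(\mathcal{H}))$, axioms (Co1), (Co2), and unitarity transfer to $X$ via their $^*$-algebraic analogues checked on matrix coefficients. The principal obstacle is the convolution principle itself: verifying that $(\pi_1*\pi_2)(a)$ is a well-defined bounded $^*$-representation when $\Delta(a)$ lives in the multiplier algebra rather than in the algebraic tensor product, which is handled by reducing to the matrix-coefficient case where rcfd-ness forces finiteness of the sum.
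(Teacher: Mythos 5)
Your overall route is the one the paper itself takes: interpret $(\pi_u\otimes\pi_u)\circ\Delta$ as a nondegenerate $^*$-homomorphism into $M(A\otimes A)$, invoke the universal property of $(A,\pi_u)$ to push it down to $A$, and then transfer (U1), (U2), (C), (U3), (D), (A) and the properties of $X$ by density and strict continuity. The paper compresses all of this into a two-line appeal to the universal property together with the remark that $X$ is ``equivalent to a direct sum of contractive maps'', so your proposal is essentially an expanded version of the same argument, and its architecture (including the observation that boundedness of the convolution representation comes for free from $\|\cdot\|_u$ once it is shown to be a $^*$-representation) is sound.

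However, the step you flag as the crux is justified by a claim that is false: for a matrix coefficient $a$ of $\Gr{X}{k}{l}{m}{n}$, the decomposition $\Delta(a)=\sum_{r,s}\Delta_{rs}(a)$ is in general \emph{not} a finite sum. The rcfd condition only ensures that each fixed $\Delta_{rs}(a)$ lies in $\mathscr{A}\aotimes\mathscr{A}$ (finitely many middle basis vectors in $\GrDA{\Hsp}{r}{s}$); the sum over the middle indices $(r,s)$ runs over all nonzero components of $\Hsp$, of which there may be infinitely many. Concretely, for dynamical quantum $SU(2)$ one has $\Delta(u_{\epsilon,\nu;y,z})=\sum_{\mu,v}u_{\epsilon,\mu;y,v}\otimes u_{\mu,\nu;v,z}$ with $v$ ranging over all of $\Lambda$. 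Moreover, orthogonality of the ranges (over $r$) and supports (over $s$) of the summands does not by itself bound a doubly indexed block sum by the supremum of the block norms (a block matrix all of whose entries are partial isometries has large norm), and incidentally the summand $\Delta_{rs}(a)$ has range projection dominated by $\pi_1(\rho_r)\otimes\pi_2(\lambda_r)$ and support dominated by $\pi_1(\rho_s)\otimes\pi_2(\lambda_s)$, not $\pi_1(\lambda_r)\otimes\pi_2(\rho_s)$. The correct justification uses unitarity of $X^{\Circt n}$: writing the $(r,s)$-summand as a fixed matrix coefficient in the third leg of $A_{rs}B_{rs}$ with $A_{rs}=(\pi_1\otimes\id)\bigl(\Gr{X}{k}{l}{r}{s}\bigr)_{13}$ and $B_{rs}=(\pi_2\otimes\id)\bigl(\Gr{X}{r}{s}{m}{n}\bigr)_{23}$, the unitarity relations give $\sum_{r,s}A_{rs}A_{rs}^*\leq 1$ and $\sum_{r,s}B_{rs}^*B_{rs}\leq 1$, so by the Cauchy--Schwarz inequality the full (possibly infinite) sum converges strongly with norm at most $\|\xi\|\,\|\eta\|$; equivalently, $(\pi_1*\pi_2)(a)$ is a matrix coefficient of the product of the two partial isometries obtained by summing $(\pi_i\otimes\id)\Gr{X}{k}{l}{m}{n}$ over all indices, which is exactly the ``direct sum of contractions'' mechanism the paper alludes to. With this repair, the remainder of your argument (the bound $\|(\pi_1*\pi_2)(a)\|\leq\|a\|_u$, the multiplier estimate, factorization through $A$, and the transfer of the axioms) goes through as written.
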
 

\begin{Rem} By the same reasoning as in Section \ref{SecPCMPG}, one can show that, even if $\pi_u$ is not injective, $(A,\Delta,\{\UnitC{k}{l}\})$ is well-defined as a C$^*$-algebraic partial compact quantum group over $I' = \{k\in I\mid \pi(\UnitC{k}{k})\neq 0\}$. It is however, by condition (A), not immediately clear that this is then a C$^*$-partial compact matrix pseudogroup over $I'$. One can prove that this is the case, but this will be treated in more detail elsewhere (see \cite{DCT1}). 
\end{Rem}

\begin{proof}[Proof (of Theorem \ref{TheoAlgtoAn})] By the universal property of $(A,\pi_u)$, we can extend $\Delta$ to a $^*$-homomorphism from $A$ to $M(A\otimes A)$. This obviously makes $(A,\Delta,\{\UnitC{k}{l}\})$ into a C$^*$-algebraic $I$-partial compact quantum semigroup. Trivially, $X =  \sum_{k,l,m,n} (\pi\otimes \id)\Gr{X}{k}{l}{m}{n}$ is well-defined as a strict limit, since it is equivalent to a direct sum of contractive maps. It is immediately clear that this makes $(A,\Delta,\{\UnitC{k}{l}\})$ into a C$^*$-algebraic $I$-partial compact matrix pseudogroup  with generating unitary representation $X$.
\end{proof} 

\section{Dynamical quantum $SU(2)$ group}
  
Dynamical quantum groups were introduced in \cite{EtV1}, and the specific example of dynamical quantum $SU(2)$ was treated in detail in \cite{KoR1}. This dynamical quantum $SU(2)$-group can be seen as a quantization-deformation of an $\R\times \R$-field of $SU(2)$-groups, with a global Poisson structure making the field into a Poisson groupoid, but \emph{not} a field of Poisson groups.   

These dynamical quantum groups were treated in \cite{EtV1,KoR1} within a purely algebraic framework. We will show here that dynamical quantum $SU(2)$ also has an operator algebraic implementation within the context of C$^*$-algebraic partial compact quantum groups. In fact, it is a specific example of the class of examples developed in \cite[Section 5]{DCT2}, whose connection to C$^*$-algebraic partial compact quantum groups will be explained in detail in \cite{DCT1}. However, the case of dynamical quantum $SU(2)$ can be treated more directly within the formalism developed in this paper.

Unlike the algebraic case treated in \cite{EtV1,KoR1}, our dynamical quantum $SU(2)$-groups will depend, apart from the $q$-parameter, on an extra $x$-parameter.

Fix $0<q<1$ and $x>0$. Let \[\Lambda_{q,x} = \Lambda_x = \Lambda = xq^{\Z},\] and let $\mathscr{B}_{q,x} = \mathscr{B}_x = \mathscr{B}$ be the $^*$-algebra of finite support functions on $\Lambda\times \Lambda$. We write the Dirac functions in $\mathscr{B}$ as $\delta_{(y,z)}=\UnitC{y}{z}$.

The following functions will be repeatedly used, \[\ctau(y) = y+y^{-1},\qquad w_{\pm}(y) = \frac{\ctau(q^{\pm 1}y)}{\ctau(y)}.\]

\begin{Def}\label{DefDynSUq2} We define $\mathscr{A}_{q,x} = \mathscr{A}_x = \mathscr{A}$ to be the $^*$-algebra generated by a copy of $\mathscr{B}$ and elements \[u_{\epsilon,\nu;y,z}\] for $\epsilon,\nu\in \{-1,1\}=\{-,+\}$ and $y,z\in \Lambda$ with defining relations \begin{eqnarray}  u_{\epsilon,\nu;y,z}&\in&\Gr{\mathscr{A}}{y}{q^{\epsilon}y}{z}{q^{\nu}z}, \\ \label{EqOrt1} \sum_{\mu\in \{\pm\}} u_{\mu,\epsilon;q^{-\mu}w,y}^* u_{\mu,\nu;q^{-\mu}w,z}&=& \delta_{\epsilon,\nu} \delta_{y,z} \UnitC{w}{q^{\epsilon}y},\\\label{EqOrt2} \sum_{\mu\in \{\pm\}} u_{\epsilon,\mu;y,w} u_{\nu,\mu;z,w}^* &=& \delta_{\epsilon,\nu}\delta_{y,z} \UnitC{y}{w} \\ u_{\epsilon,\nu;y,z}^* &=& \frac{\nu w_{\nu}(z)^{1/2}}{\epsilon w_{\epsilon}(y)^{1/2}} u_{-\epsilon,-\nu;q^{\epsilon}y,q^{\nu}z}.\end{eqnarray}
\end{Def}

We want to show that $\mathscr{A}$ can be made into a $^*$-algebraic $\Lambda$-partial compact matrix pseudogroup.

\begin{Lem} There exists a unique $^*$-homomorphism \[\Delta: \mathscr{A}\rightarrow M(\mathscr{A}\otimes \mathscr{A})\] such that $\Delta(\UnitC{y}{z}) = \sum_{v\in \Lambda} \UnitC{y}{v}\otimes \UnitC{v}{z}$ and \[\Delta(u_{\epsilon,\nu;y,z}) = \sum_{\mu,v} u_{\epsilon,\mu;y,v}\otimes u_{\mu,\nu;v,z}.\] Moreover, $(\mathscr{A},\Delta,\{\UnitC{y}{z}\})$ becomes in this way a $^*$-algebraic $\Lambda$-partial compact quantum semigroup.
\end{Lem}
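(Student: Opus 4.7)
The plan is to proceed in two stages: first to verify that the formulas given for $\Delta$ extend consistently to a $^{*}$-homomorphism, and then to check the axioms (U1), (U2), (C) of a $^{*}$-algebraic $\Lambda$-partial compact quantum semigroup.

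For well-definedness, I first note that the formulas land in $M(\mathscr{A}\otimes_{\mathrm{alg}}\mathscr{A})$: although the sums $\sum_{\mu,v}u_{\epsilon,\mu;y,v}\otimes u_{\mu,\nu;v,z}$ are a priori infinite, after multiplying with an element $1\otimes \UnitC{a}{b}$ (or $\UnitC{a}{b}\otimes 1$), the grading $u_{\mu,\nu;v,z}\in\Gr{\mathscr{A}}{v}{q^{\mu}v}{z}{q^{\nu}z}$ forces $v=q^{-\mu}a$ and $b=q^{\nu}z$, collapsing the sum to at most two terms. One then checks each defining relation of $\mathscr{A}$ on the nose. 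The grading condition $u_{\epsilon,\nu;y,z}\in\Gr{\mathscr{A}}{y}{q^{\epsilon}y}{z}{q^{\nu}z}$ survives because every summand $u_{\epsilon,\mu;y,v}\otimes u_{\mu,\nu;v,z}$ sits inside $\Gr{\mathscr{A}\otimes\mathscr{A}}{(y,v)}{(q^{\epsilon}y,q^{\mu}v)}{(v,z)}{(q^{\mu}v,q^{\nu}z)}$, and summing over $\mu,v$ respects the coproduct's grading. For the orthogonality relation \eqref{EqOrt1}, one expands
\[\sum_{\mu}\Delta(u_{\mu,\epsilon;q^{-\mu}w,y})^{*}\Delta(u_{\mu,\nu;q^{-\mu}w,z})=\sum_{\mu,\alpha,\beta,a,b}u_{\mu,\alpha;q^{-\mu}w,a}^{*}u_{\mu,\beta;q^{-\mu}w,b}\otimes u_{\alpha,\epsilon;a,y}^{*}u_{\beta,\nu;b,z},\]
applies \eqref{EqOrt1} to the first leg to collapse the $\mu$-sum into $\delta_{\alpha,\beta}\delta_{a,b}\UnitC{w}{q^{\alpha}a}$, and then reindexes $w'=q^{\alpha}a$ so that \eqref{EqOrt1} in the second leg gives $\delta_{\epsilon,\nu}\delta_{y,z}\Delta(\UnitC{w}{q^{\epsilon}y})$. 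Relation \eqref{EqOrt2} is handled symmetrically.

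The main obstacle will be verifying the $^{*}$-relation, which carries the scalar factors $w_{\pm}$. Applying $\Delta$ to $u_{\epsilon,\nu;y,z}^{*}$ and using the $^{*}$-relation in each tensor leg gives
\[\Delta(u_{\epsilon,\nu;y,z})^{*}=\sum_{\mu,v}\frac{\mu\,w_{\mu}(v)^{1/2}}{\epsilon\,w_{\epsilon}(y)^{1/2}}\,u_{-\epsilon,-\mu;q^{\epsilon}y,q^{\mu}v}\otimes\frac{\nu\,w_{\nu}(z)^{1/2}}{\mu\,w_{\mu}(v)^{1/2}}\,u_{-\mu,-\nu;q^{\mu}v,q^{\nu}z},\]
and the crucial observation is that the intermediate factor $\mu\,w_{\mu}(v)^{1/2}$ telescopes, leaving the prefactor $\nu w_{\nu}(z)^{1/2}/(\epsilon w_{\epsilon}(y)^{1/2})$ outside. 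After the reindexing $\mu'=-\mu$, $v'=q^{\mu}v$, the remaining sum is recognized as $\Delta(u_{-\epsilon,-\nu;q^{\epsilon}y,q^{\nu}z})$. Uniqueness of $\Delta$ is then automatic from the fact that the $\UnitC{y}{z}$ together with the $u_{\epsilon,\nu;y,z}$ generate $\mathscr{A}$.

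For the axioms: (U1) holds because $\mathscr{A}$ is spanned by products of graded generators, hence by the graded pieces $\Gr{\mathscr{A}}{y}{y'}{z}{z'}$. (U2) holds by the very definition of $\Delta$ on $\UnitC{y}{z}$. Coassociativity (C) is a straightforward check on generators: $(\Delta\otimes\id)\Delta(u_{\epsilon,\nu;y,z})$ and $(\id\otimes\Delta)\Delta(u_{\epsilon,\nu;y,z})$ both equal $\sum_{\mu_{1},\mu_{2},v_{1},v_{2}}u_{\epsilon,\mu_{1};y,v_{1}}\otimes u_{\mu_{1},\mu_{2};v_{1},v_{2}}\otimes u_{\mu_{2},\nu;v_{2},z}$, which is the usual ``associativity of matrix multiplication'' identity and finishes the proof.
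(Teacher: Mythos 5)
Your proof is correct and follows the same route as the paper, whose proof simply records that the proposed images of the generators satisfy the defining relations (the orthogonality relations, the $^*$-relation with the $w_{\pm}$ factors, and the grading), after which the semigroup axioms (U1), (U2), (C) are immediate; you have merely written out the verifications, including the telescoping of the $\mu\,w_{\mu}(v)^{1/2}$ factor, in detail. No gaps.
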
 

\begin{proof} It is easily checked that the images under $\Delta$ of the generators satisfy the same relations. It is then immediate that the resulting structure forms a $^*$-algebraic $\Lambda$-partial compact quantum semigroup. 
\end{proof}

\begin{Lem}\label{LemAlmostpseudo} Consider $\Hsp = l^2(\{-,+\})\otimes l^2(\Lambda)$ as a $\Lambda$-bigraded Hilbert space by the bigrading \[e_{\epsilon}\otimes e_y \in \GrDA{\Hsp}{y}{q^{\epsilon}y}.\] 
For $\epsilon,\nu\in \{-,+\}$ and $y,z\in \Lambda$, put \[\Gr{X}{y}{q^{\epsilon}y}{z}{q^{\nu}z} = u_{\epsilon,\nu;y,z} \otimes e_{\epsilon,\nu}\otimes e_{y,z},\] and put all other expressions $\Gr{X}{y}{y'}{z}{z'}$ equal to zero. Then $X$ is a unitary rcfd representation, w.r.t. which $(\mathscr{A},\Delta,\{\UnitC{k}{l}\})$ satisfies conditions (G) and (A) for a $^*$-algebraic $\Lambda$-partial compact matrix pseudogroup.
\end{Lem}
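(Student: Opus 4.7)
The plan is to verify each required condition directly from the defining relations of $\mathscr{A}$ and the explicit form of $X$; most checks are mechanical, with the work concentrated in condition (A).

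First, that $X$ is rcfd and satisfies (Co2) is essentially a bookkeeping exercise. The homogeneous component $\GrDA{\Hsp}{y}{y'}$ is spanned by $e_{\epsilon}\otimes e_y$ when $y'=q^{\epsilon}y$ and is zero otherwise, so $\lambda^{\Hsp}_y$ and $\rho^{\Hsp}_{y'}$ have rank at most two. The bigrading compatibility in (Co2) is built into the defining relation $u_{\epsilon,\nu;y,z}\in \Gr{\mathscr{A}}{y}{q^{\epsilon}y}{z}{q^{\nu}z}$. The coproduct identity $(\Delta_{rs}\otimes \id)(\Gr{X}{k}{l}{m}{n}) = (\Gr{X}{k}{l}{r}{s})_{13}(\Gr{X}{r}{s}{m}{n})_{23}$ is a direct reading of $\Delta(u_{\epsilon,\nu;y,z}) = \sum_{\mu,v} u_{\epsilon,\mu;y,v}\otimes u_{\mu,\nu;v,z}$, since the tensor factors $e_{\epsilon,\nu}\otimes e_{y,z}$ split multiplicatively under the bigrading-indexed convolution. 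Unitarity is then exactly the content of the orthogonality relations \eqref{EqOrt1} and \eqref{EqOrt2} once one translates between matrix coefficients and the basis $\{e_{\epsilon}\otimes e_y\}$ of $\Hsp$.

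Condition (G) is immediate: by definition, $\mathscr{A}$ is generated by the $\UnitC{y}{z}$ and the $u_{\epsilon,\nu;y,z}$, and these are precisely the matrix coefficients of the $\Gr{X}{k}{l}{m}{n}$.

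For condition (A), the required identity $(S\otimes \id)\Gr{X}{k}{l}{m}{n} = (\Gr{X}{m}{n}{k}{l})^*$ forces the definition
\[ S(u_{\epsilon,\nu;y,z}) = u_{\nu,\epsilon;z,y}^{*} = \frac{\epsilon\, w_{\epsilon}(y)^{1/2}}{\nu\, w_{\nu}(z)^{1/2}}\, u_{-\nu,-\epsilon;q^{\nu}z,q^{\epsilon}y}, \]
together with $S(\UnitC{y}{z}) = \UnitC{z}{y}$. One then extends $S$ linearly and anti-multiplicatively from the free algebra on the generators and checks that the defining relations of $\mathscr{A}$ are sent to zero. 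The grading relations pose no difficulty. The key identity driving consistency is $w_{\epsilon}(y)\, w_{-\epsilon}(q^{\epsilon}y) = 1$, an immediate consequence of $\ctau(q^{\epsilon}q^{-\epsilon}y)=\ctau(y)$; this ensures that $S$ applied twice to the star relation closes up, so in particular $S$ is well-defined on the $u^{*}$-side as well. Using the same identity, the orthogonality relation \eqref{EqOrt1} is mapped by $S$ precisely to \eqref{EqOrt2} (with the sign factors $\epsilon,\nu,\mu$ accounted for by anti-multiplicativity reversing the product order), and conversely.

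The main obstacle is this last verification: checking that the $w$-weights arising from repeatedly applying the star relation inside the anti-multiplicative image of \eqref{EqOrt1} and \eqref{EqOrt2} telescope correctly, so that $S$ really descends from the free $^*$-algebra to $\mathscr{A}$. Everything else either is structural or reads off directly from the presentation of $\mathscr{A}$.
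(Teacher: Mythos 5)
Your proof is correct and takes essentially the same route as the paper's: unitarity is a restatement of \eqref{EqOrt1}--\eqref{EqOrt2}, the representation property and the rcfd condition are read off from $\Delta$ and the bigrading, (G) holds by construction, and (A) is settled by defining $S$ on the generators and checking by direct computation that its anti-multiplicative extension respects the defining relations of $\mathscr{A}$. The only point of divergence is the explicit antipode formula: you take $S(u_{\epsilon,\nu;y,z}) = u_{\nu,\epsilon;z,y}^{*}$, equivalently the expression carrying the $w_{\epsilon}^{1/2}$ weight factors, which is exactly what \eqref{eq:antipode-corepalg} forces, whereas the paper's proof writes the assignment as $S(u_{\epsilon,\nu;y,z}) = u_{\nu,\epsilon;z,y}$ without the star --- your version is the accurate one, the paper's formula apparently omitting the $*$ (and hence the weight factors) that the stated condition (A) requires.
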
 
\begin{proof} The unitarity of $X$ is just a rephrasing of the orthogonality relations \eqref{EqOrt1} and \eqref{EqOrt2} in Definition \ref{DefDynSUq2}. The representation property of $X$ is immediate from the definition of $\Delta$, and the rcfd condition is immediate from the structure of the bigrading on $\Hsp$. 

It remains to verify (G) and (A) in Definition \ref{Defalgpcmp}. 

Condition (G) is immediate by construction.

For condition (A), one verifies by direct computation that the assignment \[S(u_{\epsilon,\nu;y,z}) = u_{\nu,\epsilon;z,y}\] extends to a linear anti-homomorphism satisfying the requirements in condition (A). 
 \end{proof} 

To finish proving that $(\mathscr{A},\Delta,\{\UnitC{k}{l}\})$ is a $^*$-algebraic $\Lambda$-partial compact matrix pseudogroup, we need to show that none of the $\UnitC{y}{y}$ are zero. We will combine this with proving that  $\mathscr{A}$ has a large enough C$^*$-envelope $A$, that is, that $\mathscr{A}$ embeds into $A$. For this, the precise form of the functions $w_{\epsilon}$ will be needed. 

To prepare this proof, we first find a presentation of $\mathscr{A}$ in terms of certain multiplier elements, which will also make clearer the connection with the approach to dynamical quantum $SU(2)$ in \cite{KoR1}. 

For a function $f$ on $\Lambda\times \Lambda$, write \[f(\lambda,\rho) = \sum_{y,z} f(y,z)\UnitC{y}{z} \in M(\mathscr{A}).\] Similarly, for a function $f$ on $\Lambda$ we write \[f(\lambda) = \sum_{y,z} f(y)\UnitC{y}{z},\qquad f(\rho) = \sum_{y,z}f(z)\UnitC{y}{z}.\] We then write for example $f(q\lambda,\rho)$ for the element corresponding to $(y,z)\mapsto f(qy,z)$.

We can further form in $M(\mathscr{A})$ the elements $u_{\epsilon,\nu} = \sum_{y,z} u_{\epsilon,\nu;y,z}$. Then $u=(u_{\epsilon,\nu})$ is a unitary 2$\times$2 matrix. Moreover, \begin{equation}\label{EqAdju}u_{\epsilon,\nu}^* = u_{-\epsilon,-\nu}\frac{ \nu w_{\nu}^{1/2}(\rho)}{\epsilon w_{\epsilon}^{1/2}(\lambda)}.\end{equation} We have the following commutation relations between functions on $\Lambda\times \Lambda$ and the entries of $u$: \begin{equation}\label{EqGradu} f(\lambda,\rho)u_{\epsilon,\nu} = u_{\epsilon,\nu}f(q^{-\epsilon}\lambda,q^{-\nu}\rho).\end{equation}

\begin{Rem}The comultiplication on $\mathscr{A}$ satisfies 
\[\Delta(u_{\epsilon,\nu}) = \Delta(1)\left(\sum_{\mu} u_{\epsilon,\mu}\otimes u_{\mu,\nu}\right).\]
\end{Rem}

In the following, we will write $u_{--}=\alpha, u_{-+}= \beta, u_{+-}=\gamma,u_{++}=\delta$. These satisfy the following relations.

\begin{equation}\label{EqId1}\left\{\begin{array}{lllllll} \alpha\alpha^* + \beta\beta^*, &=& 1 &&  \gamma\gamma^* + \delta\delta^* &=& 1,\\ \alpha^*\alpha+ \gamma^*\gamma &=&1,&&\beta^*\beta+ \delta^*\delta &=& 1,\\ \\ \alpha \gamma^* = -\beta \delta^*, &&&& \alpha^*\beta = -\gamma^*\delta, \end{array}\right.\end{equation}

\begin{equation}\label{EqId2} \delta^* = \alpha \frac{w_+^{1/2}(\rho)}{w_+^{1/2}(\lambda)}, \quad \gamma^*=  - \beta\frac{w_{-}^{1/2}(\rho)}{w_+^{1/2}(\lambda)},\quad  \beta^* = - \frac{w_+^{1/2}(\lambda)}{w_{-}^{1/2}(\rho)}\gamma, \quad  \alpha^* =  \frac{w_+^{1/2}(\lambda)}{w_+^{1/2}(\rho)}\delta.\end{equation}

Up to a rescaling and a reinterpretation of the paramater domain for $\lambda$ and $\rho$, these are precisely the commutation relations for dynamical quantum $SU(2)$ as in \cite{KoR1}. 

The identities in the next lemma follow immediately from \eqref{EqId1} and \eqref{EqId2}.

\begin{Lem}\label{LemExtCom} The following identities hold in $M(\mathscr{A})$. \begin{eqnarray}\label{EqId3} \tau(\lambda)\tau(q\rho)\alpha^*\alpha -\tau(\lambda/q)\tau(\rho)\alpha\alpha^* &=& (q-q^{-1})(\lambda\rho-1/\lambda\rho) \\
 \label{EqId4} \tau(\lambda)\tau(\rho/q) \beta^*\beta - \tau(\lambda/q)\tau(\rho) \beta\beta^* &=& (q-q^{-1})(\lambda/\rho-\rho/\lambda) \\  
  \tau(\lambda)\tau(q\rho)\gamma^*\gamma -\tau(q\lambda)\tau(\rho)  \gamma\gamma^* &=& (q-q^{-1})(\lambda/\rho-\rho/\lambda) \\
 \tau(\lambda)\tau(\rho/q)\delta^*\delta - \tau(q\lambda)\tau(\rho)\delta\delta^*   &=&  (q^{-1}-q)(\lambda\rho-1/\lambda\rho) 
 \end{eqnarray}

\end{Lem}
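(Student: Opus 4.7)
The plan is a direct computation, treating the four identities by the same template. Concentrating on the first, my strategy is to express $\gamma^*\gamma$ in terms of $\beta\beta^*$ and then use the unitarity relations to convert this into a relation between $\alpha^*\alpha$ and $\alpha\alpha^*$. Taking the adjoint of $\gamma^*=-\beta\,w_-^{1/2}(\rho)/w_+^{1/2}(\lambda)$ from \eqref{EqId2} gives $\gamma=-w_-^{1/2}(\rho)/w_+^{1/2}(\lambda)\,\beta^*$, so
\begin{equation*}
\gamma^*\gamma \;=\; \beta\,\frac{w_-^{1/2}(\rho)}{w_+^{1/2}(\lambda)}\cdot\frac{w_-^{1/2}(\rho)}{w_+^{1/2}(\lambda)}\,\beta^*.
\end{equation*}
Applying \eqref{EqGradu} with $(\epsilon,\nu)=(-,+)$ to pull $\beta=u_{-+}$ past the scalar, this becomes $\frac{w_-(q\rho)}{w_+(q^{-1}\lambda)}\beta\beta^*$. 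A direct calculation from $w_\pm(y)=\tau(q^{\pm1}y)/\tau(y)$ yields
\begin{equation*}
\frac{w_-(q\rho)}{w_+(q^{-1}\lambda)}=\frac{\tau(\rho)\tau(\lambda/q)}{\tau(q\rho)\tau(\lambda)}.
\end{equation*}

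Then I substitute $\gamma^*\gamma=1-\alpha^*\alpha$ and $\beta\beta^*=1-\alpha\alpha^*$ from \eqref{EqId1}, clear the denominator $\tau(\lambda)\tau(q\rho)$, and obtain
\begin{equation*}
\tau(\lambda)\tau(q\rho)\alpha^*\alpha-\tau(\lambda/q)\tau(\rho)\alpha\alpha^*=\tau(\lambda)\tau(q\rho)-\tau(\lambda/q)\tau(\rho).
\end{equation*}
Expanding with $\tau(y)=y+y^{-1}$, the right-hand side telescopes: the cross terms $q\rho/\lambda+\lambda/(q\rho)$ cancel between the two products, leaving $(q-q^{-1})\lambda\rho-(q-q^{-1})(\lambda\rho)^{-1}=(q-q^{-1})(\lambda\rho-(\lambda\rho)^{-1})$, which is identity \eqref{EqId3}.

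The other three identities follow by the same pattern, each starting from a different pair in \eqref{EqId2} and the matching orthogonality relation in \eqref{EqId1}. For \eqref{EqId4}, I would use $\delta^*=\alpha\,w_+^{1/2}(\rho)/w_+^{1/2}(\lambda)$ to rewrite $\delta^*\delta$ as a scalar times $\alpha\alpha^*$, pull the scalar through $\alpha=u_{--}$ using \eqref{EqGradu} with $(\epsilon,\nu)=(-,-)$, then use $\beta^*\beta+\delta^*\delta=1$ together with $\alpha\alpha^*+\beta\beta^*=1$; the role of $\rho$ versus $\lambda$ ends up swapped in the shift, producing the right-hand side $(q-q^{-1})(\lambda/\rho-\rho/\lambda)$. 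The third identity is obtained by the adjoint manoeuvre applied to $\beta^*=-w_+^{1/2}(\lambda)/w_-^{1/2}(\rho)\,\gamma$ together with $\gamma\gamma^*+\delta\delta^*=1$, and the fourth by applying it to $\alpha^*=w_+^{1/2}(\lambda)/w_+^{1/2}(\rho)\,\delta$ together with $\alpha^*\alpha+\gamma^*\gamma=1$; in each case the sign of $q-q^{-1}$ and the precise shift of arguments in $\tau$ on the left are dictated by which $u_{\epsilon,\nu}$ is being commuted past the scalar.

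The only real obstacle is bookkeeping. The signs $\epsilon,\nu$ determine which way $\lambda$ and $\rho$ are rescaled when pulling a scalar past $u_{\epsilon,\nu}$, and the functions $w_\pm$ transform in mutually inverse ways under these shifts; keeping the combinations straight so that the intended $\tau(\lambda)\tau(q\rho)$-type denominator emerges, and confirming that the resulting numerator simplifies to $\pm(q-q^{-1})$ times the intended $\lambda,\rho$-monomials, is the content of the lemma. No conceptual input beyond \eqref{EqId1}, \eqref{EqId2}, \eqref{EqGradu} and the definition of $\tau$ is needed.
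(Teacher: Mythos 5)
Your method is exactly the one the paper intends: the paper offers no computation at all (it only asserts, just before the lemma, that the identities follow from \eqref{EqId1} and \eqref{EqId2}), and your worked verification of the two labelled identities \eqref{EqId3} and \eqref{EqId4} is correct. In each case commuting the $w_{\pm}$-factor through the relevant generator via \eqref{EqGradu} makes the quadratic terms cancel, leaving $\tau(\lambda)\tau(q\rho)-\tau(\lambda/q)\tau(\rho)$, resp.\ $\tau(\lambda)\tau(\rho/q)-\tau(\lambda/q)\tau(\rho)$, which expand to the stated right-hand sides.

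There is, however, a genuine problem with your unverified claim that the same bookkeeping ``produces'' the remaining two lines as printed: for the third identity (the $\gamma$-line) it does not. Carrying out your own recipe, the relation $\beta^*=-w_+^{1/2}(\lambda)w_-^{-1/2}(\rho)\gamma$ gives both $\gamma^*\gamma=\frac{\tau(\rho)\tau(\lambda/q)}{\tau(q\rho)\tau(\lambda)}\beta\beta^*$ and $\gamma\gamma^*=\frac{\tau(\rho/q)\tau(\lambda)}{\tau(\rho)\tau(q\lambda)}\beta^*\beta$, so that $\tau(\lambda)\tau(q\rho)\gamma^*\gamma-\tau(q\lambda)\tau(\rho)\gamma\gamma^*=\tau(\lambda/q)\tau(\rho)\beta\beta^*-\tau(\lambda)\tau(\rho/q)\beta^*\beta$, which by \eqref{EqId4} equals $(q-q^{-1})(\rho/\lambda-\lambda/\rho)$ --- the \emph{negative} of the right-hand side printed in the lemma. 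The same sign comes out of the representation $\pi_c$ constructed later in the paper, where the left-hand side acts on $e_y\otimes e_z$ as $\tau(y/(qz))-\tau(qy/z)=(q-q^{-1})(z/y-y/z)$. So the printed $\gamma$-identity carries a sign error (harmless for the rest of the paper, which only uses \eqref{EqId3} and \eqref{EqId4}), and a correct write-up must flag and correct it rather than assert the computation delivers the statement as given. A smaller bookkeeping slip: the auxiliary relations you name for the last two lines do not close the argument as described --- the pair ``$\beta^*$-relation plus $\gamma\gamma^*+\delta\delta^*=1$'' never links $\gamma^*\gamma$ to $\gamma\gamma^*$ (you need either both consequences of the $\beta^*$-relation as above, or $\alpha^*=w_+^{1/2}(\lambda)w_+^{-1/2}(\rho)\delta$ combined with $\alpha^*\alpha+\gamma^*\gamma=1$ and $\gamma\gamma^*+\delta\delta^*=1$), and for the $\delta$-line the relation $\alpha^*\alpha+\gamma^*\gamma=1$ plays no role; the $\alpha^*$-relation alone reduces it to \eqref{EqId3}, and then the stated sign $(q^{-1}-q)$ does come out correctly.
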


\begin{Theorem} $(\mathscr{A},\Delta,\{\UnitC{k}{l}\})$ is a $^*$-algebraic $\Lambda$-partial compact matrix pseudogroup with generating unitary rcfd representation $X$, and $\mathscr{A}$ embeds faithfully into its universal C$^*$-algebra $A$.
\end{Theorem}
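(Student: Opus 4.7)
By Lemma \ref{LemAlmostpseudo} the only outstanding axiom for the $^*$-algebraic partial compact matrix pseudogroup structure is (U3), namely $\UnitC{y}{y}\neq 0$ for every $y\in\Lambda$. Together with the embedding claim, both statements follow at once if we exhibit a single faithful Hilbert-space $^*$-representation $\pi:\mathscr{A}\to B(\mathcal{K})$: then $\pi(\UnitC{y}{y})\neq 0$ yields (U3), and $\pi$ factoring through the universal C$^*$-envelope shows that $\{a:\|a\|_u=0\}=0$, so that by Theorem \ref{TheoAlgtoAn} the envelope $A$ carries the induced C$^*$-algebraic partial compact matrix pseudogroup structure with generating representation $X$.

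The construction of $\pi$ follows the weighted-shift pattern familiar from $SU_{q}(2)$. The subgroup $q^{\Z}$ acts diagonally on $\Lambda\times\Lambda$ by $(y,z)\mapsto(qy,qz)$, partitioning the object set of corners into orbits $\mathcal{O}$. On each orbit, take $\mathcal{K}_{\mathcal{O}}=\ell^{2}(\mathcal{O})$ with canonical basis $\{\xi_{y,z}\}_{(y,z)\in\mathcal{O}}$, let $\pi(\UnitC{y}{z})$ be the rank-one projection onto $\C\xi_{y,z}$, and set
\begin{equation*}
\pi(\alpha)\xi_{y,z}=a_{y,z}\,\xi_{y/q,\,z/q},
\end{equation*}
where the non-negative scalars $a_{y,z}$ are the unique bounded positive solution of the two-step recursion extracted from identity \eqref{EqId3} in Lemma \ref{LemExtCom}. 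The remaining generators $\beta,\gamma,\delta$ are then determined via \eqref{EqId1} and \eqref{EqId2} as weighted shifts of their own. A direct verification confirms that the relations of Definition \ref{DefDynSUq2} are satisfied on each $\mathcal{K}_{\mathcal{O}}$, so $\pi=\bigoplus_{\mathcal{O}}\pi_{\mathcal{O}}$ is a well-defined $^{*}$-representation of $\mathscr{A}$ on $\mathcal{K}=\bigoplus_{\mathcal{O}}\mathcal{K}_{\mathcal{O}}$. Since $\pi(\UnitC{y}{y})\neq 0$ for every $y$, condition (U3) is immediate.

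For faithfulness, I would establish a PBW-type normal form on $\mathscr{A}$: using the quadratic relations \eqref{EqOrt1}-\eqref{EqOrt2} together with the identities of Lemma \ref{LemExtCom}, each corner $\UnitC{y}{z}\mathscr{A}\UnitC{y'}{z'}$ is linearly spanned by ordered monomials in $\alpha,\alpha^{*},\gamma,\gamma^{*}$, with exponents pinned down by the corner indices. Since $\pi(\alpha)$ and $\pi(\gamma)$ are weighted shifts on $\mathcal{K}_{\mathcal{O}(y,z)}$, such normal-ordered monomials act as operators with pairwise distinct shift degrees and hence are linearly independent. Therefore $\pi$ is injective on every corner, hence on all of $\mathscr{A}$.

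The principal obstacle is the positivity-and-boundedness analysis of the recursion defining the $a_{y,z}$. One must check that the specific form $w_{\pm}(y)=\tau(q^{\pm 1}y)/\tau(y)$ with $\tau(y)=y+y^{-1}>0$ on $\Lambda$ produces a solution satisfying $0\leq a_{y,z}^{2}\leq 1$ on every orbit: positivity is needed for $\pi(\alpha)$ to be well-defined, while the upper bound is forced by $\gamma^{*}\gamma=1-\alpha^{*}\alpha\geq 0$. This is the place where the precise shape of the scalar coefficients in Definition \ref{DefDynSUq2} matters and where the deformation behaves like compact $SU(2)$ rather than $SU(1,1)$. The PBW step is parallel to the one for $SU_{q}(2)$ and for the algebraic dynamical quantum $SU(2)$ of \cite{KoR1}; the only new ingredient is the bookkeeping of the $\Lambda\times\Lambda$-grading, which enters linearly and causes no genuine additional difficulty.
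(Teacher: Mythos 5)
There is a genuine gap, and it sits exactly where the difficulty of the theorem lies. In your construction every $\UnitC{y}{z}$ acts as a \emph{rank-one} projection, so each corner $\UnitC{y}{z}\mathscr{A}\UnitC{y}{z}$ is represented inside a one-dimensional space of operators. But these corners are infinite-dimensional: they are spanned by the elements $(\beta^*\beta)^m\UnitC{y}{z}$, $m\in\N$ (equivalently, by polynomials in the Casimir element $\Omega$), and these are linearly independent in $\mathscr{A}$. Hence no single representation of the kind you describe can be faithful; your PBW/shift-degree argument breaks down precisely for normal-ordered monomials with the \emph{same} shift degree but different ``diagonal'' parts, such as $\alpha^k\beta^l(\beta^*\beta)^m\UnitC{y}{z}$ for varying $m$, whose images under $\pi$ all lie in the same one-dimensional block and are therefore massively linearly dependent. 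Relatedly, the claim that the recursion extracted from \eqref{EqId3} has a \emph{unique} bounded positive solution is not correct: that first-order difference equation admits a one-parameter family of solutions, the parameter being the value $c$ of the Casimir, and fixing one choice is exactly what collapses each corner to scalars. (A smaller, repairable slip: the diagonal $q^{\Z}$-orbits on $\Lambda\times\Lambda$ are not invariant under $\beta$ and $\gamma$, which move $(y,z)$ to $(q^{\mp1}y,q^{\pm1}z)$, so the blocks $\ell^2(\mathcal{O})$ you write down do not carry representations of all of $\mathscr{A}$; the natural carrier is $\ell^2(\Lambda)\otimes\ell^2(\Lambda)$.)

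The paper's proof confronts this by using not one representation but a continuous family $\pi_c$, $-2<c<2$, of weighted-shift representations on $\ell^2(\Lambda)\otimes\ell^2(\Lambda)$, where $c$ enters the shift coefficients through $\bigl(\tau(q^{-1}y^{\epsilon}z^{\nu})+\epsilon\nu c\bigr)^{1/2}$ --- this is precisely the one-parameter freedom in the recursion you tried to suppress. Each individual $\pi_c$ has rank-one corners and is far from faithful, but jointly they separate $\mathscr{A}$: after reducing, via the spanning set built from \eqref{EqGradu}, \eqref{EqId1}, \eqref{EqId2} and Lemma \ref{LemExtCom}, to elements of the form $\alpha^k\beta^l p(\beta^*\beta)\UnitC{y}{z}$ with $p$ a polynomial, the vanishing of $\pi_c$ on such an element for all $c\in(-2,2)$ forces $p$ to vanish on a nondegenerate interval, hence $p=0$. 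This yields (U3) and the embedding of $\mathscr{A}$ into its universal C$^*$-envelope simultaneously, as you intended, but the key idea you are missing is that faithfulness must be extracted from a family of representations sweeping the Casimir parameter, not from a single ``canonical'' weighted shift.
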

\begin{proof} Let \[E = \{\alpha^k\beta^l\gamma^m\UnitC{y}{z}, \delta^k\beta^l\gamma^m\UnitC{y}{z}, \mid k,l,m\in \N,y,z \in \Lambda\}.\] From the commutation relations for $\mathscr{A}$ in \eqref{EqGradu}, \eqref{EqId1}, \eqref{EqId2} and Lemma \ref{LemExtCom}, it follows immediately that $E$ is a spanning set for $\mathscr{A}$. 

Choose now $-2<c<2$, and define the following operators on $l^2(\Lambda)\otimes l^2(\Lambda)$,

\begin{eqnarray*} \pi_c\left(\UnitC{y'}{z'}\right)e_{y}\otimes e_{z} &=&  \delta_{y,y'}\delta_{z,z'} e_{y}\otimes e_{z},\\ \pi_c(u_{\epsilon,\nu})e_{y}\otimes e_{z} &=& \theta_{\epsilon,\nu}  \left(\frac{\tau(q^{-1}y^{\epsilon}z^{\nu})+\epsilon \nu c}{\tau(y^{\epsilon})\tau(q^{-1}z^{\nu})}\right)^{1/2}e_{q^{-\epsilon}y}\otimes e_{q^{-\nu}z},
\end{eqnarray*}
where $\theta_{-,+}=-1$ and all other values $=1$. 

Note that the operators $\pi_c(u_{\epsilon,\nu})$ are well-defined and bounded, since $\tau(y)\geq 2$ for all positive $y$, and $\tau(yz)+2\leq \tau(y)\tau(z)$ for all positive $y,z$. Also note that, by definition, $\tau(y) = \tau(y^{-1})$. 

Now obviously the $\pi_c(u_{\epsilon,\nu})$ and  $\pi_c\left(\UnitC{y'}{z'}\right)$ commute according to \eqref{EqGradu}. On the other hand, \[\pi_c(u_{\epsilon,\nu})^*e_{y,z} =  \theta_{\epsilon,\nu}\left(\frac{\tau(qy^{\epsilon}z^{\nu})+c}{\tau(qy^{\epsilon})\tau(z^{\nu})}\right)^{1/2}e_{q^{\epsilon}y,q^{\nu}z}.\] It follows that $\pi_c$ respects the relation \eqref{EqAdju}. Finally, it is also easily verified from this that $(\pi_c(u_{\epsilon,\nu}))_{\epsilon,\nu}$ is a unitary matrix, using the identity $\tau(yz)+\tau(y^{-1}z) = \tau(y)\tau(z)$ for $y,z>0$. 

From the above, it follows immediately that $\pi_c$ extends to a $^*$-representation of $\mathscr{A}$. Moreover, by looking at the shift components, it is clear that the only linear dependencies between elements in $E$ can occur within the subfamilies \[E_{k,l,y,z} = \{\alpha^k\beta^l(\beta^*\beta)^m\UnitC{y}{z} \mid m\in \N\},\quad k,l\in \Z,y,z \in \Lambda,\] where a negative power is interpreted as taking the adjoint. But let $p$ be a polynomial, and assume that, for all $-2<c<2$, \[\pi_c\left(\alpha^k\beta^lp(\beta^*\beta)\UnitC{y}{z}\right) = 0.\] Then, for all $c$, \[p\left(\frac{\tau(q^{-1}y^{\epsilon}z^{\nu})+\epsilon \nu c}{\tau(y^{\epsilon})\tau(q^{-1}z^{\nu})}\right) =0.\]
It follows that $p$ is zero on some closed interval, and hence $p=0$. 

From the above, we conclude immediately that $\UnitC{y}{y}\neq 0$ for all $y\in \Lambda$, which, combined with Lemma \ref{LemAlmostpseudo}, shows that $(\mathscr{A},\Delta,\{\UnitC{k}{l}\})$ is a $^*$-algebraic $\Lambda$-partial compact matrix pseudogroup with generating unitary rcfd representation $X$. The above also shows immediately that $\mathscr{A}$ imbeds into its universal C$^*$-algebraic envelope. 
\end{proof} 

\begin{Cor} Let $A$ be the universal C$^*$-algebraic envelope of $\mathscr{A}$. Then $A$ obtains the structure of a C$^*$-algebraic $\Lambda$-partial compact matrix pseudogroup with generating unitary rcfd representation $X$.
\end{Cor}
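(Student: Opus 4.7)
The corollary is an immediate application of Theorem \ref{TheoAlgtoAn}, so the plan is essentially to check that its hypotheses have just been verified. The preceding theorem establishes two things simultaneously: first, that $(\mathscr{A},\Delta,\{\UnitC{y}{z}\})$ is a $^*$-algebraic $\Lambda$-partial compact matrix pseudogroup with generating unitary rcfd representation $X$ (combining Lemma \ref{LemAlmostpseudo} with the non-vanishing of the $\UnitC{y}{y}$); and second, that the canonical map $\pi_u: \mathscr{A} \to A$ into the universal C$^*$-envelope is injective, which was obtained through the family of representations $\pi_c$ for $-2<c<2$.

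With both hypotheses of Theorem \ref{TheoAlgtoAn} in place, I would simply invoke that theorem: the comultiplication $\Delta$ descends and extends to a $^*$-homomorphism $A \to M(A \otimes A)$, the projections $\UnitC{y}{z}$ push forward to orthogonal self-adjoint projections in $M(A)$ summing strictly to the unit, and \[X = \sum_{y,y',z,z'} (\pi_u \otimes \id)\Gr{X}{y}{y'}{z}{z'}\] is a well-defined unitary rcfd representation in $M(A \otimes B_0(\Hsp))$ making $(A,\Delta,\{\UnitC{y}{z}\})$ into a C$^*$-algebraic $\Lambda$-partial compact matrix pseudogroup.

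There is no real obstacle here, since the hard work — verifying the orthogonality relations \eqref{EqOrt1}--\eqref{EqOrt2}, constructing the antipode map $S$, producing enough Hilbert space representations $\pi_c$ to separate points of $\mathscr{A}$, and checking $\UnitC{y}{y}\neq 0$ for all $y\in\Lambda$ — has already been done. The only thing worth flagging is that the existence of the antipode at the C$^*$-level (condition (A) of Definition \ref{DefCpcmp}) transfers from the algebraic level via the injection $\pi_u$: the dense subalgebra of $A$ generated by the matrix coefficients of $X$ and the $\UnitC{y}{z}$ is naturally identified with $\mathscr{A}$, so the linear anti-multiplicative map $S$ produced in Lemma \ref{LemAlmostpseudo} serves as the required antipode on this dense $^*$-subalgebra.
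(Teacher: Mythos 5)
Your proposal is correct and follows exactly the route the paper intends: the preceding theorem supplies both hypotheses of Theorem \ref{TheoAlgtoAn} (the $^*$-algebraic $\Lambda$-partial compact matrix pseudogroup structure with generating unitary rcfd representation $X$, and the injectivity of $\pi_u$), and the corollary is then an immediate application of that theorem. Your additional remark about the antipode transferring along $\pi_u$ to the dense subalgebra is consistent with how condition (A) is handled there.
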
 

We will denote $SU_{q,x}^{\dyn}(2) = (A,\Delta,\{\UnitC{y}{z}\})$. The following proposition clarifies the relation between the $SU_{q,x}^{\dyn}(2)$ for different values of $x$.

\begin{Prop} Assume $x_1,x_2>0$, and assume there exists $m\in \Z$ and $\epsilon \in \{-,+\}$ with $x_2= x_1^{\epsilon}q^m$. Then $SU_{q,x_1}^{\dyn}(2) \cong SU_{q,x_2}^{\dyn}(2)$.
\end{Prop}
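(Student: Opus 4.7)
The plan is to reduce to two clean cases using the fact that the object set $\Lambda_x = xq^{\Z}$ depends only on the coset $x q^{\Z} \subseteq (0,\infty)/q^{\Z}$, and then to exhibit an explicit $^*$-isomorphism in the nontrivial case using the inversion $y\mapsto y^{-1}$ of $\Lambda$.

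First, observe that if $\epsilon=+$, then $\Lambda_{x_2}=x_1q^{m+\Z}=\Lambda_{x_1}$, so the generators and relations listed in Definition \ref{DefDynSUq2} for $\mathscr{A}_{q,x_1}$ and $\mathscr{A}_{q,x_2}$ coincide verbatim, yielding $SU_{q,x_1}^{\dyn}(2)=SU_{q,x_2}^{\dyn}(2)$. This also reduces the case $\epsilon=-$ to the case $x_2=x_1^{-1}$, since $\Lambda_{x_1^{-1}q^m}=\Lambda_{x_1^{-1}}$.

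For the remaining case $x_2=x_1^{-1}$, write $x=x_1$ and note that the inversion $\iota:\Lambda_x\to\Lambda_{x^{-1}},\;y\mapsto y^{-1}$, is a bijection. The crucial identity making everything work is that $\tau(y)=\tau(y^{-1})$ (immediate from the definition of $\tau$), which gives
\[
w_{\epsilon}(y^{-1})=\frac{\tau(q^{\epsilon}y^{-1})}{\tau(y^{-1})}=\frac{\tau(q^{-\epsilon}y)}{\tau(y)}=w_{-\epsilon}(y).
\]
Using this, I will define $\Phi:\mathscr{A}_{q,x}\to\mathscr{A}_{q,x^{-1}}$ on generators by
\[
\Phi\bigl(\UnitC{y}{z}\bigr)=\UnitC{y^{-1}}{z^{-1}},\qquad \Phi(u_{\epsilon,\nu;y,z})=u_{-\epsilon,-\nu;y^{-1},z^{-1}},
\]
and check it is well-defined by verifying the relations from Definition \ref{DefDynSUq2}. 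Preservation of the bigrading relation is immediate. The orthogonality relations \eqref{EqOrt1}--\eqref{EqOrt2} are preserved after reindexing $\mu\mapsto-\mu$. The only subtle relation is the adjoint identity: the two sign flips in the prefactor $\nu/\epsilon$ cancel, while the swap $w_\pm\leftrightarrow w_\mp$ induced by inversion is absorbed exactly by the swap $(\epsilon,\nu)\mapsto(-\epsilon,-\nu)$, thanks to the displayed identity above.

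Next, I check that $\Phi$ intertwines the coproducts: applying $\Phi\otimes\Phi$ to $\Delta(u_{\epsilon,\nu;y,z})=\sum_{\mu,v}u_{\epsilon,\mu;y,v}\otimes u_{\mu,\nu;v,z}$ and reindexing $\mu\mapsto-\mu$, $v\mapsto v^{-1}$ recovers $\Delta(u_{-\epsilon,-\nu;y^{-1},z^{-1}})=\Delta(\Phi(u_{\epsilon,\nu;y,z}))$; the analogous check for the $\UnitC{y}{z}$ is immediate. Since the same formulas (applied in the opposite direction) furnish a $^*$-homomorphism $\mathscr{A}_{q,x^{-1}}\to\mathscr{A}_{q,x}$ with $\Phi\circ\Phi^{-1}=\id$ on generators, $\Phi$ is a $^*$-isomorphism of $^*$-algebraic partial compact matrix pseudogroups. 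Finally, by the universal property of the C$^*$-envelope (and injectivity of $\mathscr{A}\hookrightarrow A$ established in the preceding theorem), $\Phi$ lifts to a $^*$-isomorphism of C$^*$-algebras $A_{q,x}\to A_{q,x^{-1}}$ intertwining the coproducts and matching the projections via $\iota$, i.e.\ an isomorphism $SU_{q,x}^{\dyn}(2)\cong SU_{q,x^{-1}}^{\dyn}(2)$.

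The main obstacle is the verification of the adjoint relation under $\Phi$; once the identity $w_\epsilon(y^{-1})=w_{-\epsilon}(y)$ is noted, the calculation is essentially forced, and the rest of the proof is routine bookkeeping followed by an invocation of the universal property.
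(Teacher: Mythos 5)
Your proof is correct and takes essentially the same route as the paper: reduce via $\Lambda_{x_1q^m}=\Lambda_{x_1}$ to the case $x_2=x_1^{-1}$, and then use the inversion isomorphism $\UnitC{y}{z}\mapsto\UnitC{y^{-1}}{z^{-1}}$, $u_{\epsilon,\nu;y,z}\mapsto u_{-\epsilon,-\nu;y^{-1},z^{-1}}$, checking in particular that the adjoint relation is preserved. Your identity $w_{\epsilon}(y^{-1})=w_{-\epsilon}(y)$, combined with the sign swap $(\epsilon,\nu)\mapsto(-\epsilon,-\nu)$, is exactly the computation the paper invokes for \eqref{EqAdju}, so nothing is missing.
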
 
\begin{proof} If $x_2 = x_1q^{m}$, then $\Lambda_{x_2} = \Lambda_{x_1}$, and it follows immediately from the definition of $SU_{q,x}^{\dyn}(2)$ that in fact $SU_{q,x_1}^{\dyn}(2) = SU_{q,x_2}^{\dyn}(2)$.

It thus suffices to prove that $SU_{q,x}^{\dyn}(2) \cong SU_{q,x^{-1}}^{\dyn}(2)$ for $x>0$. But this is established by means of the isomorphism \[\UnitC{y}{z}\mapsto \UnitC{y^{-1}}{z^{-1}},\qquad u_{\epsilon,\nu;y,z}\mapsto u_{-\epsilon,-\nu;y^{-1},z^{-1}},\] which is most easily seen to extend to a ($\Delta$-preserving) $^*$-isomorphism using the description of the dynamical quantum $SU(2)$-group in terms of the matrix $(u_{\epsilon,\nu})_{\epsilon,\nu}$ and the functions $f(\lambda,\rho)$. For example, the relation \eqref{EqAdju} is preserved by the above isomorphism since $w_{\epsilon}(\lambda) = w_{\epsilon}(\lambda^{-1})$. 
\end{proof}

\section{Representation theory of the function algebra on dynamical quantum $SU(2)$}

The representation theory of $SU_{q,x}^{\dyn}(2)$ was essentially determined in \cite{DCT2}, where it was shown to coincide with the representation theory of $SU_q(2)$. On the algebraic level, this follows since $SU_{q,x}^{\dyn}(2)$ is a `dynamical' cocycle twist of $SU_q(2)$ \cite{Sto1}. 

Here, we will rather be concerned with the representation theory of the \emph{function algebra} $A$ on $SU_{q,x}^{\dyn}(2)$. That is, we wish to classify the irreducible $^*$-representations of the $^*$-algebra $\mathscr{A}_x$ associated to $SU_{q,x}^{\dyn}(2)$. 

Our method will be based on a \emph{decoupling} of $\mathscr{A}_x$. For this, we first recall the definition of the \emph{quantized enveloping algebra of $\su(1,1)$}. 

\begin{Def} Let $q>0$. The \emph{quantized enveloping algebra $U_q(\su(1,1))$} is the universal unital $^*$-algebra generated by elements $E,F,K,K^{-1}$ satisfying the following commutation rules: 
\begin{itemize}
\item $K^{-1}$ is the inverse of $K$, 
\item $K^* = K$ and $E^* = F$, 
\item $KE = qEK$,
\item $\lbrack F,E\rbrack = \frac{K^2-K^{-2}}{q-q^{-1}}$.
\end{itemize}
\end{Def}

One can turn $U_q(\su(1,1))$ into a Hopf $^*$-algebra, but this extra structure will not be needed. 

It will be convenient to consider a slight variation of $U_q(\su(1,1))$ by formally adding support projections of $K$, along the lines of \cite[Chapter 23]{Lus1}. For $y>0$, we will write \[\Gamma_y = yq^{\frac{1}{2}\Z}.\]

\begin{Def} Let $y>0$. We define $U_q^{\Gamma_y}(\su(1,1))$ to be the (non-unital) $^*$-algebra generated by a copy of the $^*$-algebra of finite support functions on $\Gamma_y$, whose Dirac functions we will write $\Unit_r$, together with elements $E_{r},F_r$ for each $r\in \Gamma_y$, such that $E_{r}^* = F_{r}$ and 
\begin{itemize}
\item $\Unit_{qr}E_{r} = E_{r} = E_{r}\Unit_{r}$, 
\item $\Unit_r F_r = F_r  = F_r\Unit_{qr}$
\item $F_{r}E_{r} - E_{r/q}F_{r/q}  = \frac{r^2-r^{-2}}{q-q^{-1}} \Unit_{r}$.
\end{itemize}
\end{Def} 

\begin{Lem} We have a unique $^*$-homomorphism $U_q(\su(1,1))\rightarrow M(U_q^{\Gamma_y}(\su(1,1)))$ such that \[K \mapsto \sum_{r\in \Gamma_y} r\Unit_r,\quad E\mapsto \sum_{r\in \Gamma_y} E_{r},\quad F\mapsto \sum_{r\in \Gamma_y} F_r. \] Moreover, this $^*$-homomorphism is injective.
\end{Lem}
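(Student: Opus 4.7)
I would proceed in two stages: first verify the map is a well-defined $^*$-homomorphism, then establish injectivity.

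For well-definedness, uniqueness is immediate (the images of $K^{\pm 1}, E, F$ determine any $^*$-homomorphism out of $U_q(\su(1,1))$). To check the relations hold, note that $\sum_r \Unit_r$ converges strictly to $1\in M(U_q^{\Gamma_y}(\su(1,1)))$ by the analogue of condition (U1), and the $\Unit_r$ are mutually orthogonal projections, so $\sum_r r\Unit_r$ and $\sum_r r^{-1}\Unit_r$ are each other's (strict) inverses. The identities $K^*=K$ and $E^*=F$ are immediate from reality of $r$ and from $E_r^* = F_r$. The relation $KE = qEK$ follows from the grading identities $\Unit_t E_s = \delta_{t,qs}E_s$ and $E_s\Unit_t = \delta_{s,t}E_s$: both $KE$ and $qEK$ equal $q\sum_s sE_s$. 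The commutator relation is the main computation. Using the grading one sees $F_rE_s = \delta_{r,s}F_rE_r$ and $E_rF_s = \delta_{r,s}E_rF_r$, so
\begin{equation*}
FE - EF \;=\; \sum_{r\in\Gamma_y} (F_rE_r - E_rF_r) \;=\; \sum_{r\in\Gamma_y}(F_rE_r - E_{r/q}F_{r/q}) \;=\; \sum_{r\in\Gamma_y}\frac{r^2-r^{-2}}{q-q^{-1}}\Unit_r,
\end{equation*}
after reindexing $r\mapsto r/q$ in the $EF$ term; this final sum is exactly $(K^2-K^{-2})/(q-q^{-1})$.

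For injectivity, I would use a PBW-basis argument. Recall $U_q(\su(1,1))$ admits the PBW basis $\{F^aK^bE^c : a,c\in\N,\ b\in\Z\}$. Using the grading identities above one computes
\begin{equation*}
F^aK^bE^c \;=\; \sum_{r\in\Gamma_y} (q^ar)^b\, F_r F_{qr}\cdots F_{q^{a-1}r}\, \Unit_{q^ar}\, E_{q^{a-1}r}E_{q^{a-2}r}\cdots E_{q^{a-c}r},
\end{equation*}
where the sum converges strictly in $M(U_q^{\Gamma_y}(\su(1,1)))$ and each summand lies in a single ``corner'' $\Unit_{q^{a}r}U_q^{\Gamma_y}\Unit_{q^{a-c}r}$. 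Thus distinct $(a,c)$ contribute to summands living in disjoint corners (because the grading shift $a-c$ is different or the weights don't match up), and within a fixed corner the $b$-dependence is polynomial in $r$, which separates different values of $b$.

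Reducing to linear independence of the ``straightened'' monomials $F_rF_{qr}\cdots F_{q^{a-1}r}\Unit_{q^ar}E_{q^{a-1}r}\cdots E_{q^{a-c}r}$ in each corner is a standard straightening/PBW argument applied to $U_q^{\Gamma_y}(\su(1,1))$: any word in the generators $\Unit_s, E_s, F_s$ can, by repeated use of the defining relations, be rewritten as a sum of such monomials, and one shows these monomials are linearly independent either by exhibiting an explicit action on a direct sum of Verma modules (one highest-weight module for each $r \in \Gamma_y$, which automatically factors through $U_q^{\Gamma_y}(\su(1,1))$), or by the standard diamond-lemma argument. This representation-theoretic verification is the main technical step; the algebraic manipulations above are bookkeeping given PBW for $U_q(\su(1,1))$.
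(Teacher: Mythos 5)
Your verification that the map is a well-defined $^*$-homomorphism is fine (the paper dismisses this as immediate), but the injectivity argument has a genuine gap. You reduce injectivity to the linear independence, inside $U_q^{\Gamma_y}(\su(1,1))$, of the straightened monomials $F_rF_{qr}\cdots F_{q^{a-1}r}\,\Unit_{q^ar}\,E_{q^{a-1}r}\cdots E_{q^{a-c}r}$, and then declare this a ``standard straightening/PBW argument''. But $U_q^{\Gamma_y}(\su(1,1))$ is defined here purely by generators and relations, so nothing guarantees a priori that it is as large as the notation suggests; a PBW/triangular-decomposition theorem for this idempotented algebra is precisely the nontrivial content, of the same depth as the injectivity you are trying to prove (if the relations collapsed the algebra, both would fail simultaneously). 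So as written you have deferred the crux rather than proved it: the diamond-lemma or Verma-module verification you gesture at is exactly the real work, and it cannot be obtained by simply citing PBW for $U_q(\su(1,1))$. There are also smaller inaccuracies: the displayed summand lies in the corner $\Unit_{r}U_q^{\Gamma_y}\Unit_{q^{a-c}r}$, not $\Unit_{q^ar}U_q^{\Gamma_y}\Unit_{q^{a-c}r}$; the parenthetical claim that distinct $(a,c)$ give disjoint corners is false when $a-c=a'-c'$ (the corner only detects $a-c$, which is precisely why you then need independence of the monomials within a fixed corner); and ``a highest-weight module automatically factors through $U_q^{\Gamma_y}$'' is backwards --- modules pull back from $M(U_q^{\Gamma_y}(\su(1,1)))$ to $U_q(\su(1,1))$, while to go the other way you must define the action of the $\Unit_r$ by hand as spectral projections of $K$.

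For comparison, the paper avoids any PBW statement for $U_q^{\Gamma_y}(\su(1,1))$ altogether: it writes down a single explicit Verma-type representation of $U_q^{\Gamma_y}(\su(1,1))$ on a space with basis $\{e_n\mid n\in\N\}$ (well-definedness only requires checking the three defining relations), extends it to the multiplier algebra, and then verifies that the resulting representation of $U_q(\su(1,1))$ is faithful using the known basis $E^mK^nF^l$ of $U_q(\su(1,1))$; faithfulness of the composite immediately forces injectivity of the map. If you carried out your ``direct sum of Verma modules'' suggestion you would essentially be reproducing this argument, and in fact one suitably chosen module suffices. So the gap is fillable along the lines you indicate, but the decisive construction and computation are missing from the proposal.
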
 
\begin{proof} It is clear that the images are well-defined multipliers. It is then immediate that there is a unique $^*$-homomorphism with the above prescribed images.

Let us show that it is injective. Consider the vector space $V$ with basis $\{e_n\mid n\in \N\}$. Then we can represent $U_q^{\Gamma_y}(\su(1,1))$ on $V$ (neglecting the $^*$-structure) by \[\Unit_r e_n = \delta_{r,yq^n}e_n,\quad E_{r} e_n = \delta_{r,yq^n} e_{n+1},\]\[(q-q^{-1})^2 F_{r}e_n = \delta_{r,yq^{n-1}} (q^n-q^{-n})(q^{n-1}y^2-q^{1-n}y^{-2})e_{n-1}.\] This representation can then be extended to the multiplier algebra. As the elements $E^mK^nF^l$ form a basis of $U_q(\su(1,1))$, it follows that the corresponding representation is injective when considered as a $U_q(\su(1,1))$-represention via the $^*$-homomorphism in the lemma, which implies the injectivity of the map $U_q(\su(1,1))\rightarrow M(U_q^{\Gamma_y}(\su(1,1)))$.
\end{proof}  

In what follows, we will need two copies of $U_q^{\Gamma_y}(\su(1,1))$'s, for different values of $y$. We will correspondingly use the indices $(1)$  and $(2)$ as upper indices for the generators of the two copies.
\begin{Lem}\label{LemHomUqtodyn}
There is a unique non-degenerate $^*$-homomorphism \[\Phi: U_q^{\Gamma_x}(\su(1,1))\otimes U_q^{\Gamma_1}(\su(1,1))\rightarrow \mathscr{A}_x\] such that 
\[(q^{-1}-q)\Phi(E^{(1)})=  \alpha \tau^{1/2}(\lambda)\tau^{1/2}(q\rho),\qquad (q^{-1}-q)\Phi( E^{(2)}) =  \beta \tau^{1/2}(\lambda)\tau^{1/2}(\rho/q),\]
\[(q^{-1}-q)\Phi(F^{(1)})=  \delta\tau^{1/2}(\lambda)\tau^{1/2}(\rho/q),\qquad (q^{-1}-q)\Phi( F^{(2)}) =  - \gamma\tau^{1/2}(\lambda)\tau^{1/2}(q\rho),\]
\[\Phi(\Unit_r^{(1)}) = \sum_{z\in \Lambda_x} \UnitC{r^2/z}{z},\qquad \Phi(\Unit_s^{(2)}) = \sum_{z\in \Lambda_x} \UnitC{s^2z}{z}.\] Moreover, $\Phi$ is surjective.
\end{Lem}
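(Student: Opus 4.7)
The plan is to first check uniqueness, then build $\Phi$ by verifying it respects all defining relations, and finally establish non-degeneracy and surjectivity. Uniqueness is immediate: the non-unital $*$-algebra $U_q^{\Gamma_x}(\su(1,1))\otimes U_q^{\Gamma_1}(\su(1,1))$ is generated by the projections $\Unit_r^{(i)}$, the elements $E_r^{(i)} = E^{(i)}\Unit_r^{(i)}$, and their adjoints, on all of which $\Phi$ is explicitly specified.

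For existence, I would verify in turn: (i) the $\Phi(\Unit_r^{(i)})$ form two mutually commuting families of orthogonal self-adjoint projections with $\Phi(\Unit_r^{(1)}\Unit_s^{(2)}) = \UnitC{rs}{r/s}$ when $rs,r/s\in\Lambda_x$ and $0$ otherwise, which in passing yields strict convergence of $\sum\Phi(\Unit_r^{(1)}\Unit_s^{(2)})$ to $1$ in $M(\mathscr{A}_x)$ and hence non-degeneracy; (ii) the adjoint relation $\Phi(E^{(i)})^* = \Phi(F^{(i)})$, which reduces to \eqref{EqId2} after sliding $\tau$-factors via \eqref{EqGradu} and using the telescoping identity $w_\pm^{1/2}(\mu)\tau^{1/2}(\mu) = \tau^{1/2}(q^{\pm 1}\mu)$; (iii) the support conditions $\Phi(\Unit_{qr}^{(i)})\Phi(E_r^{(i)}) = \Phi(E_r^{(i)}) = \Phi(E_r^{(i)})\Phi(\Unit_r^{(i)})$, immediate from the bigrading $u_{\epsilon,\nu;y,z}\in \Gr{\mathscr{A}}{y}{q^\epsilon y}{z}{q^\nu z}$ together with the observation that $\Phi(\Unit_r^{(1)})$ is supported where $\lambda\rho = r^2$, while $\Phi(\Unit_s^{(2)})$ is supported where $\lambda/\rho = s^2$; (iv) the central commutation relation $\Phi(F^{(i)}_rE^{(i)}_r - E^{(i)}_{r/q}F^{(i)}_{r/q}) = \tfrac{r^2-r^{-2}}{q-q^{-1}}\Phi(\Unit_r^{(i)})$; and (v) commutativity of the two copies.

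The hard part will be (iv). For the first copy, substituting $\delta = w_+^{1/2}(\rho)w_+^{-1/2}(\lambda)\alpha^*$ via \eqref{EqId2}, sliding $\alpha^*$ past $\tau$-factors using $\alpha^* f(\lambda,\rho) = f(q\lambda,q\rho)\alpha^*$, and collapsing the resulting $w^{1/2}$- and $\tau^{1/2}$-products should produce the clean expressions
\begin{align*}
(q^{-1}-q)^2\Phi(F^{(1)}E^{(1)}) &= \tau(\lambda)\tau(q\rho)\,\alpha^*\alpha, \\
(q^{-1}-q)^2\Phi(E^{(1)}F^{(1)}) &= \tau(\lambda/q)\tau(\rho)\,\alpha\alpha^*.
\end{align*}
Their difference equals $(q-q^{-1})(\lambda\rho - (\lambda\rho)^{-1})$ by \eqref{EqId3}; multiplying by $\Phi(\Unit_r^{(1)})$ replaces $\lambda\rho$ with $r^2$, and dividing by $(q-q^{-1})^2$ yields the required identity. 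The second copy runs in parallel using \eqref{EqId4} with $\lambda/\rho=s^2$ on $\Phi(\Unit_s^{(2)})$. For (v), the matrix-entry relations in \eqref{EqId1} re-expressed via \eqref{EqId2} produce identities like $\alpha\beta\,w_{-}^{1/2}(\rho) = \beta\alpha\,w_+^{1/2}(\rho)$, which after the same sliding manoeuvres deliver $[\Phi(E^{(1)}),\Phi(E^{(2)})]=0$ and its cousins.

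Finally, for surjectivity, note that the $\Phi(\Unit_r^{(1)}\Unit_s^{(2)})$ already exhaust every $\UnitC{y}{z}$ (take $r = \sqrt{yz}$, $s = \sqrt{y/z}$), and each matrix coefficient $u_{\epsilon,\nu;y,z}$ is recovered by multiplying $\Phi(E^{(i)})$ or $\Phi(F^{(i)})$ on both sides by suitable $\Phi(\Unit_r^{(1)}\Unit_s^{(2)})$ and then absorbing the everywhere-nonzero scalar factors of the form $\tau^{1/2}(\cdot)$. Since $\mathscr{A}_x$ is generated by the $\UnitC{y}{z}$ together with the $u_{\epsilon,\nu;y,z}$, this gives surjectivity of $\Phi$.
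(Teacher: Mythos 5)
Your proposal is correct and follows essentially the same route as the paper: define $\Phi$ on the multiplier generators, check the $U_q^{\Gamma}(\su(1,1))$-relations and the commutation of the two copies by sliding functions via \eqref{EqGradu} and invoking \eqref{EqId1}--\eqref{EqId2} and Lemma \ref{LemExtCom} (your intermediate identities, e.g. $(q^{-1}-q)^2\Phi(F^{(1)}E^{(1)})=\tau(\lambda)\tau(q\rho)\alpha^*\alpha$ and $\alpha\beta\,w_-^{1/2}(\rho)=\beta\alpha\,w_+^{1/2}(\rho)$, are exactly what the paper's ``follows immediately/equivalent with'' steps amount to), and then get surjectivity from $\Phi(\Unit_r^{(1)}\Unit_s^{(2)})=\UnitC{rs}{r/s}$ plus recovering the $u_{\epsilon,\nu;y,z}$ up to invertible $\tau^{1/2}$-factors. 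You merely spell out in more detail what the paper leaves as direct computation.
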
 

\begin{proof} 
Note first that the values for $\Phi(E^{(i)})$ and $\Phi(F^{(i)})$ given above are well-defined inside $M(\mathscr{A}_x)$, with $\Phi(F^{(i)})^* = \Phi(E^{(i)})$. We can then define $\Phi(E^{(i)}_r) = \Phi(E^{(i)})\Phi(\Unit_r^{(i)})$ and $\Phi(F^{(i)}_r) = \Phi(\Unit_r^{(i)})\Phi(F^{(i)})$. We have to verify that the $\Phi(\Unit_r^{(i)})$, $\Phi(E^{(i)}_r)$ and $\Phi(F^{(i)}_r)$ satisfy the relations of $U_q^{\Gamma_i}(\su(1,1))$, and that these elements pairwise commute for different values of $i$.

It is  easily seen that $\Phi(\Unit_{qr}^{(i)})\Phi(E^{(i)}_r) = \Phi(E^{(i)}_r) = \Phi(E^{(i)}_r)\Phi(\Unit_r^{(i)})$ and $\Phi(\Unit_r^{(i)})\Phi(E^{(i+1)}_s) = \Phi(E^{(i+1)}_s)\Phi(\Unit_r^{(i)})$ (with the upper indices taken modulo 2).

Let us now verify that $\lbrack \Phi(E^{(i)})^*,\Phi(E^{(i)})\rbrack\Unit_r = \frac{r^2-r^{-2}}{q-q^{-1}}\Phi(\Unit_r)$. This follows immediately from \eqref{EqId3} and \eqref{EqId4}.

It remains to check that $\lbrack \Phi(E^{(1)}),\Phi(E^{(2)})\rbrack = 0$ and $\lbrack \Phi(F^{(1)}),\Phi(E^{(2)})\rbrack = 0$, but these identities are equivalent with the last two identities in \eqref{EqId1}.

Finally, $\Phi(\Unit_r^{(1)}\Unit_s^{(2)}) = \UnitC{rs}{r/s}$ whenever $r/s\in \Lambda_x$, and is zero otherwise. It follows that the range of $\Phi$ takes values in $\mathscr{A}_x$, and moreover that the image of $\Phi$ contains all finite support functions on $\Lambda_x\times \Lambda_x$. It is then clear that in fact $\Phi$ is surjective. 
\end{proof}

\begin{Def}We write \[C = (q^{-1}-q)^2FE-qK^2-q^{-1}K^{-2} \in U_q(\su(1,1))\] for the \emph{Casimir element} of $U_q(\su(1,1))$.
\end{Def}
It is well-known and easily verified that the Casimir element $C$ is a self-adjoint element in the center of $U_q(\su(1,1))$ (and which, in fact, generates the center). 

\begin{Lem}\label{LemKer} The kernel of the map $\Phi$ of Lemma \ref{LemHomUqtodyn} is generated by the collection of all $(C^{(1)}+C^{(2)})\Unit_r^{(1)}\Unit_s^{(2)}$ with $r,s\in \Gamma_x$, and all $\Unit_r^{(1)}\Unit_s^{(2)}$ with $rs\notin \Lambda_x$
\end{Lem}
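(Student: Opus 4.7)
The plan is to establish $\ker\Phi=K$ via two inclusions, where $K$ denotes the two-sided ideal generated by the stated elements. The forward $K\subseteq\ker\Phi$ is a direct computation in $M(\mathscr{A}_x)$; the reverse $\ker\Phi\subseteq K$ follows from a PBW-type reduction of the quotient modulo $K$, matched against the explicit basis of $\mathscr{A}_x$ obtained in the proof of the previous theorem.

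For $K\subseteq\ker\Phi$, the formula $\Phi(\Unit_r^{(1)}\Unit_s^{(2)})=\UnitC{rs}{r/s}$ (vanishing unless $rs\in\Lambda_x$) from the proof of Lemma \ref{LemHomUqtodyn} handles the first family of generators. For the Casimir family, using $\Phi(F^{(i)})=\Phi(E^{(i)})^{*}$ (consistent with \eqref{EqAdju}) together with the fact that $\alpha^{*}\alpha$ and $\beta^{*}\beta$ are grading-preserving and hence commute with functions of $\lambda,\rho$, one computes
\[
(q^{-1}-q)^{2}\Phi(F^{(1)}E^{(1)})=\alpha^{*}\alpha\,\tau(\lambda)\tau(q\rho),\qquad (q^{-1}-q)^{2}\Phi(F^{(2)}E^{(2)})=\beta^{*}\beta\,\tau(\lambda)\tau(\rho/q),
\]
together with $\Phi(K^{(1)})^{2}=\lambda\rho$ and $\Phi(K^{(2)})^{2}=\lambda/\rho$. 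Substituting \eqref{EqId3} and \eqref{EqId4} to rewrite $\alpha^{*}\alpha,\beta^{*}\beta$ in terms of $\alpha\alpha^{*},\beta\beta^{*}$ plus explicit rational corrections, and invoking $\alpha\alpha^{*}+\beta\beta^{*}=1$ from \eqref{EqId1}, the expression $\Phi(C^{(1)}+C^{(2)})$ reduces to a Laurent polynomial in $\lambda,\rho$ that one verifies telescopes to zero upon expanding the $\tau$'s.

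For $\ker\Phi\subseteq K$, the tensor product admits the PBW basis $\{F^{(1)a_{1}}E^{(1)c_{1}}F^{(2)a_{2}}E^{(2)c_{2}}\Unit_{r}^{(1)}\Unit_{s}^{(2)}\}_{a_{i},c_{i}\geq 0,\,r\in\Gamma_x,\,s\in\Gamma_1}$. Modulo $K$ one restricts to $rs\in\Lambda_x$, and the congruence $C^{(1)}+C^{(2)}\equiv 0$ combined with the definition of each Casimir and the eigenvalues of $\Phi(K^{(i)})^{2}$ on $\Unit_r^{(1)}\Unit_s^{(2)}$ yields
\[
F^{(1)}E^{(1)}\Unit_{r}^{(1)}\Unit_{s}^{(2)}\equiv -F^{(2)}E^{(2)}\Unit_{r}^{(1)}\Unit_{s}^{(2)}+\frac{qr^{2}+q^{-1}r^{-2}+qs^{2}+q^{-1}s^{-2}}{(q^{-1}-q)^{2}}\Unit_{r}^{(1)}\Unit_{s}^{(2)}\pmod{K}.
\]
Extracting a central $F^{(1)}E^{(1)}$ from $F^{(1)a_{1}}E^{(1)c_{1}}$ (commuting the shifted $\Unit^{(1)}$-projection through $E^{(1)c_{1}-1}$ via $E\Unit_{r'}=\Unit_{qr'}E$, and using that the two copies commute) and applying this relation reduces $\min(a_{1},c_{1})$ by one, at the cost of possibly increasing $\min(a_2,c_2)$ by one and a scalar lower-order term. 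Iterating produces a spanning set of $(U_{q}^{\Gamma_{x}}\otimes U_{q}^{\Gamma_{1}})/K$ consisting of monomials with $a_{1}c_{1}=0$ and $rs\in\Lambda_x$, each graded piece having one free parameter $\min(a_2,c_2)\in\N$.

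To conclude, using \eqref{EqGradu} to collect the $\tau$-factors in $\Phi(E^{(i)}),\Phi(F^{(i)})$, the $\Phi$-image of each reduced monomial is a strictly positive $\tau$-product times $\delta^{a_{1}}\alpha^{c_{1}}\gamma^{a_{2}}\beta^{c_{2}}\UnitC{rs}{r/s}$ (with $a_{1}c_{1}=0$). Reordering the Greek letters to the standard form $\alpha^k\beta^l\gamma^m$ (respectively $\delta^k\beta^l\gamma^m$) via the commutation relations \eqref{EqId1}--\eqref{EqId2} expands this into the basis $\{\alpha^{k}\beta^{l}\gamma^{m}\UnitC{y}{z},\delta^{k}\beta^{l}\gamma^{m}\UnitC{y}{z}\}$ of $\mathscr{A}_x$, with leading term (in the filtration by $\min(l,m)$) at $\min(l,m)=\min(a_2,c_2)$ with nonzero coefficient. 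Since this family was shown linearly independent in $\mathscr{A}_x$ in the previous theorem's proof (via the representations $\pi_c$), the reduced spanning set of the quotient has $\Phi$-image linearly independent in $\mathscr{A}_x$; hence $\Phi$ descends to an injection on the quotient and $\ker\Phi=K$. The main obstacle is the PBW reduction combined with the basis matching: both involve careful commutation calculations in the non-unital multiplier setting, with the filtration triangularity (with respect to $\min(l,m)$) needing to be verified for the specific scalars and signs arising from \eqref{EqId1}--\eqref{EqId4}; the strict positivity of the $\tau$-products that appear as rescalings, needed to invert the match, follows from $\tau>0$ on positive reals.
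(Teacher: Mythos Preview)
Your argument is sound, but the paper proceeds quite differently. Rather than a PBW reduction followed by basis matching, the paper simply constructs an explicit inverse to the induced map $\widetilde{\Phi}$ on the quotient $\widetilde{B}_x=B_x/K$: one writes down the obvious candidates $\widetilde{\Psi}(\UnitC{w}{z})=\widetilde{\Unit}_{\sqrt{wz}}^{(1)}\widetilde{\Unit}_{\sqrt{w/z}}^{(2)}$ and $\widetilde{\Psi}(\alpha)=(q^{-1}-q)\widetilde{E}^{(1)}\tau^{-1/2}(\widetilde{K}^{(1)}\widetilde{K}^{(2)})\tau^{-1/2}(q\widetilde{K}^{(1)}/\widetilde{K}^{(2)})$, and similarly for $\beta,\gamma,\delta$, and then verifies directly that these images satisfy the defining relations \eqref{EqId1}--\eqref{EqId2}. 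This yields a two-sided inverse to $\widetilde{\Phi}$ at once, with no filtration argument needed. The paper's route is shorter and sidesteps precisely the triangularity computation you identify as the main obstacle; your route, by contrast, makes the matching of PBW-type bases on both sides explicit and reuses the independence established via the representations $\pi_c$, which is informative in its own right. One small slip: the scalar multiplying $\delta^{a_1}\alpha^{c_1}\gamma^{a_2}\beta^{c_2}\UnitC{rs}{r/s}$ also carries the sign $(-1)^{a_2}$ coming from $\Phi(F^{(2)})$, so it is nonzero rather than strictly positive --- this is harmless for your injectivity conclusion.
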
 
\begin{proof} Let $\widetilde{B}_x$ be the quotient of $B_x = U_q^{\Gamma_x}(\su(1,1))\otimes U_q^{\Gamma_1}(\su(1,1))$ by the ideal generated by $(C^{(1)}+C^{(2)})B_x$ and all $\Unit_r^{(1)}\Unit_s^{(2)}$ with $rs\notin \Lambda_x$. It is straightforward to compute, using the commutation relations \eqref{EqId1}, that $\Phi$ descends to a homomorphism $\widetilde{\Phi}$ on $\widetilde{B}_x$. 

Denote the images of the generators of $B_x$ in $\widetilde{B}_x$ by the same symbols adorned with a tilde. For $w,z\in \Lambda_x$, write $\widetilde{\Psi}(\UnitC{w}{z}) = \widetilde{\Unit}_{\sqrt{wz}}^{(1)}\widetilde{\Unit}_{\sqrt{w/z}}^{(2)}$, and \begin{eqnarray*} 
\widetilde{\Psi}(\alpha) &=& (q^{-1}-q)\widetilde{E}^{(1)}\tau^{-1/2}(\widetilde{K}^{(1)}\widetilde{K}^{(2)})\tau^{-1/2}(q\widetilde{K}^{(1)}/\widetilde{K}^{(2)}),\\ 
\widetilde{\Psi}(\beta) &=& (q^{-1}-q)\widetilde{E}^{(2)}\tau^{-1/2}(\widetilde{K}^{(1)}\widetilde{K}^{(2)})\tau^{-1/2}(\widetilde{K}^{(1)}/q\widetilde{K}^{(2)}),\\ 
\widetilde{\Psi}(\gamma) &=& (q-q^{-1})\widetilde{F}^{(2)}\tau^{-1/2}(\widetilde{K}^{(1)}\widetilde{K}^{(2)})\tau^{-1/2}(q\widetilde{K}^{(1)}/\widetilde{K}^{(2)}),\\
\widetilde{\Psi}(\delta) &=&  (q^{-1}-q)\widetilde{F}^{(1)}\tau^{-1/2}(\widetilde{K}^{(1)}\widetilde{K}^{(2)})\tau^{-1/2}(\widetilde{K}^{(1)}/q\widetilde{K}^{(2)}).
\end{eqnarray*}
By a direct computation, these elements satisfy the defining relations \eqref{EqId1} and \eqref{EqId2}. This provides a unique non-degenerate $^*$-homomorphism $\widetilde{\Psi}: \mathscr{A}_x\rightarrow \widetilde{B}_x$ with the above images on generators, forming an inverse to $\widetilde{\Phi}$.
\end{proof}

The irreducible representations of $\mathscr{A}_x$ can now be computed by first classifying the irreducible $^*$-representations of the $U_q^{\Gamma_y}(\su(1,1))$. 

By \emph{$^*$-representation} we will mean a non-degenerate bounded $^*$-representation $\pi$ of a $^*$-algebra on a Hilbert space $\Hsp = \Hsp_{\pi}$. For $U_q^{\Gamma_y}(\su(1,1))$, this means in particular that $\Hsp$ is a (closed) direct sum of the Hilbert spaces $\Hsp_r = \pi(\Unit_r)\Hsp$. We will denote by $H = H_{\pi}$ the \emph{algebraic} direct 
sum of all $\Hsp_r$. This then carries a representation of $U_q(\su(1,1))$. 

The representation theory of $U_q^{\Gamma_y}(\su(1,1))$ is very similar to the one of $U_q(\su(1,1))$ \cite{MMNNSU1}.

\begin{Lem}\label{CorCas} If $\pi$ is an irreducible $^*$-representation of $U_q^{\Gamma_y}(\su(1,1))$, there exists $c\in \R$ such that $\pi(C)\xi = c\xi$ for all $\xi \in H_{\pi}$. 
\end{Lem}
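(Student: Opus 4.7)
The plan is to invoke Schur's lemma ``one spectral projection at a time'', since $C$ itself lies only in the multiplier algebra $M(U_q^{\Gamma_y}(\su(1,1)))$ while each truncation $C\Unit_r = (q^{-1}-q)^2 F_rE_r - (qr^2+q^{-1}r^{-2})\Unit_r$ already sits inside $U_q^{\Gamma_y}(\su(1,1))$ itself. First I would verify that $C$ is central in $M(U_q^{\Gamma_y}(\su(1,1)))$: centrality in $U_q(\su(1,1))$ is standard, and since $C$ is built from $K^{\pm 2}$ and $FE$, which both commute with the spectral projections of $K$, one has $C\Unit_r = \Unit_r C$, which in turn yields $CE_r = EC\Unit_r = E\Unit_r C = E_r C$ and similarly $CF_r = F_r C$.

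Next, for each $r\in\Gamma_y$ with $\Hsp_r := \pi(\Unit_r)\Hsp_\pi \neq 0$, I would apply Schur on the corner $\pi(\Unit_r U_q^{\Gamma_y}(\su(1,1))\Unit_r)$. This corner acts irreducibly on $\Hsp_r$ by a standard argument: any non-zero closed invariant subspace $\mathcal{K}\subseteq\Hsp_r$ satisfies $\pi(U_q^{\Gamma_y}(\su(1,1)))\mathcal{K} = \Hsp_\pi$ by irreducibility of $\pi$, whence compressing by $\pi(\Unit_r)$ shows $\pi(\Unit_r U_q^{\Gamma_y}(\su(1,1))\Unit_r)\mathcal{K}$ is dense in $\Hsp_r$, forcing $\mathcal{K} = \Hsp_r$. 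By centrality, $\pi(C\Unit_r)$ is a bounded self-adjoint operator on $\Hsp_r$ commuting with this corner, so Schur delivers $\pi(C\Unit_r) = c_r I_{\Hsp_r}$ for some $c_r \in \R$.

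Finally, I would show $c_r$ is independent of $r$. For $\xi\in\Hsp_r$, the commutation $CE_r = E_rC$ yields $c_r\pi(E_r)\xi = c_{qr}\pi(E_r)\xi$, so $c_r = c_{qr}$ whenever $\pi(E_r)$ is not identically zero on $\Hsp_r$; analogously, via $F_r$, one gets $c_r = c_{r/q}$. Fixing $r_0$ with $\Hsp_{r_0}\neq 0$, setting $c := c_{r_0}$, and letting $S := \{r\in\Gamma_y : \Hsp_r\neq 0,\ c_r = c\}$, I would check that $V := \bigoplus_{r\in S}\Hsp_r$ is $\pi$-invariant: $\pi(\Unit_s)$ trivially respects the summand decomposition; $\pi(E_s)$ sends $\Hsp_s$ into $\Hsp_{qs}$, and if this image is non-zero for some $s\in S$ then $qs\in S$ by the previous remark, otherwise nothing leaves $V$; and $\pi(F_s)$ is handled symmetrically. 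Irreducibility of $\pi$ together with $0\neq\Hsp_{r_0}\subseteq V$ then forces $V = \Hsp_\pi$, i.e.\ $c_r = c$ for every $r$ with $\Hsp_r\neq 0$. Writing an arbitrary $\xi\in H_\pi$ as a finite sum $\sum \xi_r$ with $\xi_r\in\Hsp_r$, one obtains $\pi(C)\xi = \sum c_r\xi_r = c\xi$. The main technical point is this final constancy step: the algebraic commutation relations only transmit the scalar across an edge $r\leftrightarrow qr$ on which $\pi(E_r)$ or $\pi(F_r)$ acts non-trivially, and $\Gamma_y$ naturally decomposes into two $q$-orbits, so it is the full irreducibility of $\pi$, expressed through invariance of $V$, rather than the commutation alone that produces a single scalar.
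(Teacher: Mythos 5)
Your proof is correct, but it follows a more hands-on route than the paper, whose entire proof is one line: since $\pi(C)$ is bounded on each $\Hsp_r$ and $C$ is central, one views $\pi(C)$ as a blockwise bounded self-adjoint operator commuting with the whole representation and applies a spectral argument --- its spectral projections lie in the commutant, which is trivial by irreducibility, so $\pi(C)$ is a single scalar. You instead avoid any global spectral theory: you apply Schur's lemma to each corner $\pi(\Unit_r\,\cdot\,\Unit_r)$ acting on $\Hsp_r$, where boundedness and self-adjointness of $\pi(C\Unit_r)$ are immediate, obtain scalars $c_r$, and then glue them using $CE_r=E_rC$, $CF_r=F_rC$ together with the explicit invariant subspace built from $S=\{r\mid \Hsp_r\neq 0,\ c_r=c\}$ (take its closure so that irreducibility applies; this is harmless since all generators act boundedly). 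What your route buys is that the possible unboundedness of $\pi(C)$ on all of $\Hsp_\pi$ never has to be confronted, and it makes explicit the point you rightly flag: the commutation relations alone only transport the scalar across shifts that act non-trivially, and $\Gamma_y$ splits into two $q$-orbits, so it really is irreducibility (triviality of the global commutant, used through your invariant-subspace step), not centrality alone, that yields one value of $c$. The paper's global spectral/commutant argument absorbs this automatically and is shorter, at the cost of treating the direct sum of the bounded blocks of $\pi(C)$ as a single self-adjoint operator; your corner-by-corner Schur argument is more elementary and spells out exactly where each hypothesis enters.
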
 
\begin{proof} As $\pi(C)$ is bounded when restricted to any $\Hsp_r$, this follows immediately from the centrality of $\C$ and a spectral argument. 
\end{proof} 

\begin{Cor}\label{CorOneDim} If $\pi$ is an irreducible $^*$-representation of $U_q^{\Gamma_y}(\su(1,1))$ on a Hilbert space $\Hsp_{\pi}$, then $\Hsp_r$ is at most one-dimensional for each $r\in \Gamma_y$.
\end{Cor}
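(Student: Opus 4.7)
The strategy is to use the centrality of the Casimir $C$ to show that each $\pi(E_r)$ is either zero or a scalar multiple of a unitary isomorphism $\Hsp_r \to \Hsp_{qr}$; irreducibility then forces $\pi$ to be cyclically generated by a single vector whose orbit hits each weight space in a one-dimensional subspace.

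First, I would restrict the identity $\pi(C) = c\,\id$ (from Lemma \ref{CorCas}) to $\Hsp_r$. Since $K^{\pm 1}$ acts as $r^{\pm 1}$ on $\Hsp_r$, and $FE$ restricts there to $\pi(F_r)\pi(E_r) = \pi(E_r)^{*}\pi(E_r)$, the Casimir equation gives
\[\pi(E_r)^{*}\pi(E_r) = \lambda_r\,\id_{\Hsp_r}, \qquad \lambda_r := (c + qr^2 + q^{-1}r^{-2})/(q^{-1}-q)^2 \ge 0.\]
Applying the defining relation $\pi(F_{qr})\pi(E_{qr}) - \pi(E_r)\pi(F_r) = \tfrac{(qr)^2 - (qr)^{-2}}{q-q^{-1}}\pi(\Unit_{qr})$ on $\Hsp_{qr}$, together with the analogous identity $\pi(E_{qr})^{*}\pi(E_{qr}) = \lambda_{qr}\,\id_{\Hsp_{qr}}$ at the shifted weight, one deduces that $\pi(E_r)\pi(E_r)^{*} = \mu_r\,\id_{\Hsp_{qr}}$ is also a scalar.

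Next, a polar-type analysis of $\pi(E_r): \Hsp_r \to \Hsp_{qr}$: if $\lambda_r > 0$, then $\pi(E_r)$ is injective and the identity $\pi(E_r)\pi(E_r)^{*}\pi(E_r) = \lambda_r\pi(E_r)$ forces $\mu_r = \lambda_r$, whence $\lambda_r^{-1/2}\pi(E_r)$ is a unitary $\Hsp_r \to \Hsp_{qr}$. If instead $\lambda_r = 0$, then $\pi(E_r) = 0 = \pi(F_r)$, in which case the subspace $\bigoplus_{r' \le r}\Hsp_{r'}$ (with respect to the natural ordering on $\Gamma_y$) is $\pi$-invariant; by irreducibility it is either $\{0\}$ or all of $\Hsp_\pi$. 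Hence the support $I := \{r \in \Gamma_y : \Hsp_r \neq 0\}$ forms a contiguous "interval" in $\Gamma_y$, and for every adjacent pair $r, qr \in I$ the operator $\pi(E_r)$ is a nonzero scalar multiple of a unitary.

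Finally, fix $r_0 \in I$ and a unit vector $\xi_0 \in \Hsp_{r_0}$, and let $V$ be the closed cyclic subspace of $\Hsp_\pi$ generated by $\xi_0$. Since the normalized operators $\widetilde{E}_r := \lambda_r^{-1/2}\pi(E_r)$ and $\widetilde{F}_r := \widetilde{E}_r^{*}$ are mutually inverse unitaries, any word in the $\pi(E_{r'})$'s, $\pi(F_{r'})$'s, and $\pi(\Unit_{r'})$'s applied to $\xi_0$ collapses, via the relations $\widetilde{F}_{r'}\widetilde{E}_{r'} = \id$ and $\widetilde{E}_{r'}\widetilde{F}_{r'} = \id$, to a scalar multiple of a single iterated power landing in one weight space $\Hsp_{q^m r_0}$. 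Consequently $V \cap \Hsp_r$ is at most one-dimensional for each $r$, and irreducibility gives $V = \Hsp_\pi$, so $\dim \Hsp_r \le 1$.

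The main obstacle is the second step: a priori the Casimir only delivers $\pi(E_r)^{*}\pi(E_r)$ as a scalar on $\Hsp_r$, and one must carefully combine the Casimir at the shifted weight with the $[F,E]$-commutation relation to upgrade $\pi(E_r)\pi(E_r)^{*}$ to a scalar on $\Hsp_{qr}$. Once this symmetric scalarity is in place the $\pi(E_r)$'s become essentially unitary intertwiners between neighbouring weight spaces, and the remainder is a standard ladder-representation bookkeeping.
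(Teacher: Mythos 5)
Your proof is correct and rests on essentially the same mechanism as the paper's: centrality of the Casimir (Lemma \ref{CorCas}) together with cyclicity of a single weight vector and the fact that $E$ and $F$ shift weights in opposite directions, the only difference being that the paper performs the reduction of arbitrary algebra elements to pure powers of $E$ or $F$ via the spanning monomials $E^kK^mC^l\Unit_r$ and $F^kK^mC^l\Unit_r$, while you carry it out at the operator level through the scalarity of $\pi(E_r)^*\pi(E_r)$ and $\pi(E_r)\pi(E_r)^*$ on the weight spaces. Only a cosmetic slip: in the case $\lambda_r=0$ the invariant subspace is $\bigoplus_{r'\le qr}\Hsp_{r'}$ rather than $\bigoplus_{r'\le r}\Hsp_{r'}$ (since $F_{r/q}$ may still map $\Hsp_r$ upward), but this does not affect the argument.
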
 
\begin{proof} 
Monomials of the form $E^kK^mF^l\Unit_r$ span $U_q^{\Gamma_y}(\su(1,1))$, and hence also the elements of the form $E^kK^mC^l\Unit_r$ combined with those of the form $F^kK^mC^l\Unit_r$. Using Lemma \ref{CorCas}, the corollary then follows immediately from the fact that any non-zero vector in $\Hsp_r$ is cyclic by the irreducibility condition and the fact that $E$ and $F$ unilaterally shift the components in different directions.
\end{proof}

We will classify irreducible $^*$-representations of $U_q^{\Gamma_y}(\su(1,1))$ in terms of what we call \emph{$c$-sets}.

\begin{Def}\label{DefAdapt} Let $c\in\R$. For $\epsilon \in \{\pm\}$, a number $z>0$ will be called \emph{$c_{\epsilon}$-adapted} if \begin{equation}\label{EqAd+}  0 \leq c+ \ctau(q^{-\epsilon}z),\end{equation} and \emph{strictly} $c_{\epsilon}$-adapted if this holds strictly. 

The number $z$ is called \emph{$c$-adapted} if it is both $c_+$- and $c_-$-adapted. 

A subset $Z \subseteq \R_{>0}$ is called a \emph{$c$-set} if the following conditions hold: \begin{itemize} 
\item[$\bullet$] $Z$ is not empty.
\item[$\bullet$] $Z$ consists of $c$-adapted points.
\item[$\bullet$] If $z\in Z$ is strictly $c_{\epsilon}$-adapted, then $q^{-2\epsilon}z \in Z$.
\end{itemize}
A $c$-set is called \emph{irreducible} if it can not be written as the union of two disjoint $c$-sets.
\end{Def}

The $c$-sets in $\R$ can be classified as follows, organized in `series' in analogy with the representations of $U_q(\su(1,1))$. For $z>0$, we write \[Z_z = zq^{2\Z},\quad Z_z^+ = zq^{2\N},\quad Z_{z}^{-} = zq^{-2\N},\quad \textrm{and }Z_1^{0} = \{1\}.\] 

\begin{Prop}\label{PropClass1D} The following lists exhaust all
  irreducible $c$-sets in $\R$.
\begin{enumerate}[a)] 
\item If $c>-2: Z_z$ for $q\leq z< q^{-1}$ (`strange' for $c>2$, `principal' for $|c|\leq 2$),
\item If $c \leq -2$, with $-c=\ctau(w_c)$ with $0 < w_c\leq 1$:
\begin{enumerate}[(i)]
\item\label{It1} $Z_z$ for $\frac{q}{w_c}<z<\frac{w_c}{q}$ (`complementary')
\item\label{Itb} $Z_{qw_c}^+$ and $Z_{1/qw_c}^-$ (`large positive and negative discrete')
\item\label{Ita} Only in the case $w_c>q$: $Z_{q/w_c}^+$ and $Z_{w_c/q}^-$  (`small positive and negative discrete')
\item\label{It2} Only in the case  $w_c= q$: $Z_1^0$ (`trivial')
\end{enumerate}
\end{enumerate}
Furthermore, all the sets within a list for a fixed $c$ are distinct, except that cases \eqref{Itb} and \eqref{Ita} coincide in the case $c=-2$.
\end{Prop}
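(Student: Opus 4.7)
My plan is to reinterpret a $c$-set as a union of connected components of an undirected graph $\Gamma_c$ on the set $\mathcal{A}_c \subseteq \R_{>0}$ of $c$-adapted points: the vertices are the $c$-adapted points, and I draw an edge between $z$ and $q^{-2\epsilon}z$ whenever $z$ is strictly $c_\epsilon$-adapted. A direct computation shows that this edge relation is symmetric, since $z$ is strictly $c_+$-adapted if and only if $q^{-2}z$ is strictly $c_-$-adapted. Under this reformulation a $c$-set is exactly a nonempty union of connected components of $\Gamma_c$, and irreducibility corresponds to being a single connected component. Throughout I will use the elementary facts that $\tau(y) = y + y^{-1}$ attains its minimum $2$ uniquely at $y = 1$, is strictly monotone on each of $(0, 1]$ and $[1, \infty)$, and satisfies $\tau(y) = \tau(y^{-1})$.

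For $c > -2$, every $y > 0$ satisfies $\tau(y) > -c$ strictly, so every point of $\R_{>0} = \mathcal{A}_c$ is strictly adapted in both directions, every potential edge of $\Gamma_c$ is present, and the connected components are the $q^{2\Z}$-orbits, parametrized by the fundamental domain $z \in [q, q^{-1})$. For $c \leq -2$ I introduce $w_c \in (0, 1]$ with $\tau(w_c) = -c$; the condition $\tau(y) \geq -c$ becomes $y \leq w_c$ or $y \geq w_c^{-1}$, and a straightforward calculation gives
\[
\mathcal{A}_c \;=\; (0, qw_c] \,\sqcup\, R_{II} \,\sqcup\, [1/(qw_c), \infty),
\]
where $R_{II} = [q/w_c, w_c/q]$ when $w_c \geq q$ (reducing to $\{1\}$ when $w_c = q$) and $R_{II} = \emptyset$ when $w_c < q$. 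The four points $qw_c, q/w_c, w_c/q, 1/(qw_c)$ are precisely the points of $\mathcal{A}_c$ that fail to be strictly adapted in one direction; all other points of $\mathcal{A}_c$ are strictly adapted in both. Specifically, $qw_c$ and $q/w_c$ fail strict $c_+$-adaptedness, and $w_c/q$ and $1/(qw_c)$ fail strict $c_-$-adaptedness.

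I then trace the connected components of $\Gamma_c$ by following forcing chains. Passing to the logarithmic coordinate $s = \log_q z$, the forcing acts as $s \leftrightarrow s \pm 2$, applied exactly when the source is strictly adapted in the corresponding direction, so chains terminate only at the four boundary points. Each such boundary point seeds a one-sided chain giving a discrete series: $qw_c$ yields $Z_{qw_c}^+$, $1/(qw_c)$ yields $Z_{1/(qw_c)}^-$, and when $w_c > q$ the points $q/w_c$ and $w_c/q$ yield $Z_{q/w_c}^+$ and $Z_{w_c/q}^-$ respectively. Any interior point of $R_{II}$ is strictly adapted in both directions, so it seeds a full orbit $Z_z = z q^{2\Z}$ as a single component; since $R_{II}$ has $\log_q$-length $2(1 - \log_q w_c) < 2$, no two interior points lie in the same orbit, giving the complementary series. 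The degenerate case $w_c = q$ is treated separately: $R_{II} = \{1\}$, the complementary interval is empty, the small and large discrete series coincide, and $z = 1$ becomes an isolated vertex giving the trivial component $Z_1^0$.

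The hard part is proving that these listed sets exhaust the irreducible $c$-sets, i.e., that no $z \in \mathcal{A}_c$ outside the listed components can belong to any $c$-set. The potential obstruction is that iterating the forcing at such a $z$ could in principle land in the forbidden gap $\R_{>0} \setminus \mathcal{A}_c$, in which case $z$ cannot lie in any $c$-set at all. In the logarithmic coordinate this becomes a residue argument modulo $2$: if $s$ is not congruent to one of the four boundary values $\pm(\log_q w_c) \pm 1$ mod $2$, then iterating $s \mapsto s \mp 2$ forces the chain into the open gap $(1 - \log_q w_c, 1 + \log_q w_c)$ or its mirror image, which lies outside $\mathcal{A}_c$, giving a contradiction. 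Together with the routine verifications that the listed components are pairwise distinct, and that the stated coincidences of cases at $c = -2$ and $w_c = q$ really occur, this completes the classification.
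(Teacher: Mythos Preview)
Your argument is correct in substance and reaches the same classification as the paper, but the organizing claim ``a $c$-set is exactly a nonempty union of connected components of $\Gamma_c$'' is false as stated. One direction holds: every $c$-set is a union of components, and every irreducible $c$-set is a single component. The converse fails: a component $C$ may contain a point $z$ that is strictly $c_\epsilon$-adapted while $q^{-2\epsilon}z \notin \mathcal{A}_c$, and then $C$ is not a $c$-set at all. For instance, with $w_c = q^2$ one has $\mathcal{A}_c = (0,q^3]\cup[q^{-3},\infty)$; the component of $q^4$ in $\Gamma_c$ is $q^4 q^{2\N}$, which is not a $c$-set since $q^4$ is strictly $c_+$-adapted but $q^2 \notin \mathcal{A}_c$. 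You evidently understand this, since your final paragraph is devoted to ruling out precisely these spurious components, but the earlier sentence should be weakened to ``every $c$-set is a nonempty union of connected components of $\Gamma_c$, and every irreducible $c$-set is a single component.'' A second minor point: the inequality $2(1-\log_q w_c) < 2$ fails at $w_c = 1$ (i.e.\ $c = -2$), where the $\log_q$-length of $R_{II}$ equals $2$; the conclusion that no two interior points share an orbit still holds because the interior is open, but the stated reason needs adjusting there.

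The paper's own proof is organized differently: rather than first computing $\mathcal{A}_c$ and its graph, it starts from an arbitrary irreducible $c$-set $Z$ and splits on whether some $z \in Z$ satisfies $c + \tau(qz) = 0$. If no such $z$ exists, $Z$ must be a full $q^{2\Z}$-orbit, and one determines directly which orbits consist of $c$-adapted points. If such a $z$ exists, it is one of the four boundary points, and the one-sided (or trivial) form of $Z$ is read off from it. Your geometric picture in the logarithmic coordinate makes the structure more visible and packages the exhaustion as a single residue argument modulo $2$, at the cost of the extra step showing that the spurious components are not $c$-sets; the paper avoids that step by never leaving a given $c$-set. The underlying mathematics is the same.
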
 

\begin{proof} Fix $c\in \R$. 

Assume first that $Z$ is an irreducible $c$-set with $c+\tau(qz)\neq 0$ for all $z\in Z$. It then follows that $Z$ is necessarily invariant under multiplication with $q^{2\Z}$, and from the irreducibility assumption we infer that $Z = zq^{2\Z}=Z_z$ for some $z>0$, which we may choose to be the unique one such that $q\leq z<q^{-1}$. 

If $c>-2$, it is clear that $Z_z$ is a $c$-set for any such $z$. If $c\leq -2$, we may write $c=-w_c-w_c^{-1}$ for a unique $0< w_c\leq 1$. We then have to find a necessary and sufficient condition on $z$ such that $\tau(w_c)\leq \tau(q^{2m+1}z)$ for all $m\in \Z$. Clearly, it is sufficient to have $\tau(w_c)\leq \tau(qz)$ and $\tau(w_c)\leq \tau(z/q)$. Since by assumption $qz\leq 1$ and $z/q\geq 1$, this is equivalent with $qz\leq w_c$ and $\frac{1}{w_c}\leq z/q$, so $\frac{q}{w_c}\leq z\leq \frac{w_c}{q}$. Since by assumption $\tau(w_c)\neq \tau(qz)$ and $\tau(w_c)\neq \tau(z/q)$, we may use strict inequalities.

Assume now that $Z$ is an irreducible $c$-set with $c+\tau(qz)=0$ for some $z\in Z$. Then necessarily we must have $c\leq -2$, and hence $c= -w_c-w_c^{-1}$ for a unique $0< w_c\leq 1$. 

If also $c+\tau(z/q)=0$, then necessarily $z=1$ and $c= -q-q^{-1}$, and we obtain that $Z=\{1\}=Z_1^0$. If $c+\tau(z/q)\neq 0$, then we consider separately the two cases $c+\tau(z/q)>0$ and $c+\tau(z/q)<0$. 

If $c+\tau(z/q)>0$, then we infer that $q^{-2}z\in Z$, and hence $q^{-2\N}z\subseteq Z$. By irreducibility, we infer $q^{-2\N}z = Z= Z_z^-$. We hence have to verify which conditions on $z$ ensure that $q^{-2\N}z$ is an irreducible $c$-set. However, we know already that $\tau(qz)=\tau(w_c)$, hence either $qz = w_c$ or $qz = \frac{1}{w_c}$. In the first case, we obtain on $w_c$ the condition $\tau(w_c)<\tau(w_c/q^2)$, and it is easily seen that this is equivalent with $q<w_c$. In the second case, the inequality $\tau(\frac{1}{w_c})<\tau(\frac{1}{q^2w_c})$ is automatic, for all $w_c\leq 1$.  

The case $c+\tau(z/q)>0$ is similar. 

The above argument classifies all irreducible $c$-sets. The fact that all sets for a fixed $c$ are distinct (except possibly when $c=-2$) is immediately clear. 
\end{proof}

\begin{Def} Fix $y>0$. For $\pi$ an irreducible $^*$-representation of $U_q^{\Gamma_y}(\su(1,1))$, an element $r\in \Gamma_y$ is called \emph{$\pi$-compatible} if $\Hsp_r\neq 0$. 

For $c\in \R$, a subset $T\subseteq \R_{>0}$ is called \emph{$(y,c)$-compatible} if there exists an \emph{irreducible} representation $\pi$ of $U_q^{\Gamma_y}(\su(1,1))$ with $\pi(C) = c$ and $T=\{r\in \Gamma_y\mid \Hsp_{r}\neq \{0\}\}$. In this case, we say that $\pi$ is \emph{$T$-compatible}.
\end{Def}

\begin{Prop}\label{PropClassRep} A set $T\subseteq \R_{>0}$ is a $(y,c)$-compatible set if and only if $Z_T = \{t^2\mid t\in T\}$ is an irreducible $c$-set contained in  $\Lambda_{y^2}$. Moreover, for any $(y,c)$-compatible set $T$ there is exactly one irreducible $^*$-representation $\pi$ of $U_q^{\Gamma_y}(\su(1,1))$, up to unitary equivalence, which is $T$-compatible.
\end{Prop}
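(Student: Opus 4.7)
The plan is to exploit the Casimir element to turn the entire question into a calculation of norms of raising/lowering operators on weight spaces. A direct computation using $C = (q^{-1}-q)^2 FE - qK^2 - q^{-1}K^{-2}$ shows that on $\Hsp_r$ one has
\[(q^{-1}-q)^2 F_r E_r = (c + \ctau(qr^2))\Unit_r,\]
and combining this with the defining relation $F_rE_r - E_{r/q}F_{r/q} = \frac{r^2-r^{-2}}{q-q^{-1}}\Unit_r$ yields the companion identity
\[(q^{-1}-q)^2 E_{r/q}F_{r/q} = (c + \ctau(q^{-1}r^2))\Unit_r.\]
These two identities are the bridge between the algebraic notion of $c$-adaptedness and the operator-theoretic structure of $\pi$.

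For the forward direction, let $\pi$ be an irreducible $T$-compatible $^*$-representation with Casimir scalar $c$ (from Lemma \ref{CorCas}). Since $F_rE_r = E_r^*E_r$ and $E_{r/q}F_{r/q} = F_{r/q}^*F_{r/q}$ are positive operators, the two Casimir identities immediately give $c+\ctau(qr^2) \geq 0$ and $c+\ctau(q^{-1}r^2) \geq 0$ for every $r \in T$, i.e.\ every $r^2 \in Z_T$ is $c$-adapted. If $r^2 \in Z_T$ is strictly $c_-$-adapted, then $E_r \ne 0$, so $\Hsp_{qr} \ne \{0\}$ and $q^2 r^2 \in Z_T$; the $c_+$ case is symmetric using $F_{r/q}$. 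Containment $Z_T \subseteq \Lambda_{y^2}$ follows because $T \subseteq \Gamma_y$ implies $T^2 \subseteq \Lambda_{y^2}$. Finally, irreducibility of $Z_T$ as a $c$-set follows from irreducibility of $\pi$: any decomposition $Z_T = W_1 \sqcup W_2$ into disjoint $c$-sets would induce an orthogonal decomposition $\Hsp = V_1 \oplus V_2$, and the connecting-operator formulas above show that $E$ and $F$ cannot mix the summands (indeed, across any link from $V_i$ to $V_{3-i}$ the relevant $r^2$ would have to be strictly adapted in the direction of that link, contradicting the assumption that both $W_j$ are $c$-sets).

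For the converse and uniqueness, given an irreducible $c$-set $Z \subseteq \Lambda_{y^2}$, set $T = \{t \in \Gamma_y \mid t^2 \in Z\}$ and $\Hsp = \bigoplus_{t \in T}\C\xi_t$ with orthonormal basis $\{\xi_t\}$. Define $\pi(\Unit_r)$ to be the projection onto $\C\xi_r$ (zero if $r \notin T$) and $\pi(E_t)\xi_t = \lambda_t\, \xi_{qt}$ with $\lambda_t \geq 0$ given by $\lambda_t^2 = (c+\ctau(qt^2))/(q^{-1}-q)^2$ and $\xi_{qt}:=0$ if $qt \notin T$; extend by self-adjointness to $\pi(F_r)$. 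The $c$-adaptedness of every $t^2 \in Z$ makes each $\lambda_t^2 \geq 0$, and the defining relations of $U_q^{\Gamma_y}(\su(1,1))$ are now built into the construction via the two Casimir identities. Irreducibility of $\pi$ reduces to the following dichotomy: for $t,qt \in T$, either $t^2$ is strictly $c_-$-adapted (so $\lambda_t \neq 0$ and $E$ links $\xi_t$ to $\xi_{qt}$) or not, in which case one could set $W_1 = \{z \in Z \mid z \geq t^2\}$ and $W_2 = \{z \in Z \mid z \leq q^2 t^2\}$ and verify directly that both are $c$-sets, contradicting irreducibility of $Z$; and symmetrically in the downward direction. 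Hence all $\xi_t$ lie in the cyclic subspace generated by any single $\xi_{t_0}$. Uniqueness follows from Corollary \ref{CorOneDim}: any $T$-compatible irreducible $\pi'$ has one-dimensional weight spaces, the $\lambda'_t$ for $\pi'$ are determined in modulus by the same Casimir formula, and the remaining phase freedom can be absorbed by rephasing the chosen basis vectors $\xi'_t$ of $\Hsp_t$, yielding the unitary equivalence.

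The main obstacle is the step verifying that the irreducibility of the $c$-set $Z$ forces strict adaptedness at every internal link of $T$. This requires carefully unwinding the definition of a $c$-set at a putative splitting point and checking that the two halves are themselves $c$-sets; after that, both the invariance arguments (forward direction) and the cyclicity argument (reverse direction) run smoothly.
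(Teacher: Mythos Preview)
Your proof is correct and follows essentially the same route as the paper's own argument: both use the Casimir identities $(q^{-1}-q)^2 F_rE_r = (c+\ctau(qr^2))\Unit_r$ and $(q^{-1}-q)^2 E_{r/q}F_{r/q} = (c+\ctau(q^{-1}r^2))\Unit_r$ to translate $c$-adaptedness into positivity of $E^*E$ and $F^*F$ on weight spaces, construct the representation on $l^2(T)$ by the explicit shift formula, and argue uniqueness via the gauge (phase) freedom on one-dimensional weight spaces. Your treatment of irreducibility in both directions is somewhat more detailed than the paper's (which simply notes that a non-irreducible $\pi$ would split $T$ into two $(y,c)$-compatible pieces), but the underlying idea is identical.
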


\begin{proof} In the proof, we will use the notation $X_+ = E$ and $X_- = F$.

Assume first that $T$ is $(y,c)$-compatible, and let $\pi$ be a $T$-compatible irreducible $^*$-representation of $U_q^{\Gamma_y}(\su(1,1))$. If $r\in T$, then it follows from the definition of the Casimir element that $r^2$ is $c$-adapted. Moreover, if $r\in T$ is strictly $c_{\epsilon}$-adapted, then we have that $\|\pi(X_{\epsilon})\xi\|\neq 0$ for a non-zero $\xi\in \Hsp_r$, hence also $\Hsp_{q^{-\epsilon}r}\neq \{0\}$. It follows that $Z_T$ is a $c$-set. Now if $Z_T=Z_{T_1}\cup Z_{T_2}$ a disjoint union of $c$-sets, it would follow that $\pi$ restricts to the direct sum of all $\Hsp_r$ with $r\in T_1$, contradicting irreducibility. It follows that $Z_T$ is an irreducible $c$-set.

Conversely, let $Z_T$ be an irreducible $c$-set with $T\subseteq \Gamma_y$. Put $\Hsp_{\pi} = l^2(T)$ with the grading determined by $\delta_{r}\in l^2(T)_r$. Define a pair of adjoint operators $\pi(X_{\epsilon})$ on $H_{\pi}$ by the formulae \begin{eqnarray}\label{EqFormRepPlus} (q-q^{-1})\pi(X_{\epsilon})  \delta_{r} &=&  (\ctau(q^{\epsilon}r^2)+c)^{1/2}\delta_{q^{\epsilon}r},\end{eqnarray} where the right hand side is considered as the zero vector when the accompanying scalar factor is zero. Note that the roots on the right hand side are well-defined precisely because $Z_T$ is a $c$-set. 

By direct computation, using the defining commutation relations, we see that $\pi$ defines a $^*$-representation of $U_q^{\Gamma_y}(\su(1,1))$ with $\pi(C) =c$, and clearly this representation is bounded. Moreover, $\pi$ is irreducible since otherwise, by Corollary \ref{CorOneDim}, $T$ would split as a disjoint union of $(y,c)$-compatible sets. Hence $T$ is an $(y,c)$-compatible set.

Now the formula for $\pi(X_+)$ is uniquely determined up to a unimodular gauge factor. As any non-zero $\Hsp_r$ is cyclic for $\pi$, it follows that these gauge factors are determined by their value at one component. We then easily conclude that $\pi$ is in fact the unique $T$-compatible $^*$-representation, up to unitary equivalence.
\end{proof}

If $Z_T$ is an irreducible $c$-set, we will write $\pi_T$ for the accompanying representation of $U_q^{\Gamma_y}(\su(1,1))$. Recall the map $\Phi$ of Lemma \ref{LemHomUqtodyn}.

\begin{Theorem}\label{TheoIrrAx} The irreducible $^*$-representations of $\mathscr{A}_x$ are of the form $\pi_{S,T}$ with \[\pi_{S,T}\circ \Phi = \pi_S\otimes \pi_T,\] where $S,T\subseteq \R_{>0}$ are such that there exists $c\in \R$ with 
\begin{itemize}
\item $Z_S \subseteq \Lambda_{x^2}$ an irreducible $c$-set, 
\item $Z_T \subseteq \Lambda_1$ an irreducible $-c$-set, and 
\item $Z_SZ_T \subseteq x^2q^{2\Z}$. 
\end{itemize}
\end{Theorem}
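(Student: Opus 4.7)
My proof would proceed in three steps.

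First, I would pull back any irreducible $^*$-representation $\sigma$ of $\mathscr{A}_x$ along the surjective $^*$-homomorphism $\Phi\colon B_x \to \mathscr{A}_x$ of Lemma \ref{LemHomUqtodyn}, where $B_x = U_q^{\Gamma_x}(\su(1,1)) \otimes U_q^{\Gamma_1}(\su(1,1))$. Setting $\pi := \sigma \circ \Phi$, the surjectivity of $\Phi$ produces a bijection between irreducible $^*$-representations of $\mathscr{A}_x$ and irreducible $^*$-representations of $B_x$ whose kernels contain $\Ker \Phi$; by Lemma \ref{LemKer}, the latter is equivalent to the pair of conditions $\pi(C^{(1)} + C^{(2)}) = 0$ and $\pi(\Unit_r^{(1)} \Unit_s^{(2)}) = 0$ whenever $rs \notin \Lambda_x$. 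The task reduces to classifying the irreducible $\pi$ satisfying these two requirements.

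Second, I would show that every irreducible $^*$-representation $\pi$ of $B_x$ is an external tensor product $\pi_S \otimes \pi_T$ with $\pi_S$ and $\pi_T$ irreducible $^*$-representations of the two factors in the sense of Proposition \ref{PropClassRep}. On each spectral component $\Hsp_{r,s} = \pi(\Unit_r^{(1)}\Unit_s^{(2)})\Hsp$, the element $C^{(1)}$ is represented by a polynomial in the bounded generators $E_{r/q}^{(1)}, F_{r/q}^{(1)}$ and scalars, hence acts as a bounded self-adjoint operator. Assembling these yields a self-adjoint $\pi(C^{(1)})$ whose spectral projections commute with the bounded algebra $\pi(B_x)$ by centrality of $C^{(1)}$ in $M(B_x)$; irreducibility then forces these projections to be trivial, so $\pi(C^{(1)}) = c \cdot \id$ for some $c \in \R$, and similarly $\pi(C^{(2)}) = c' \cdot \id$. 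Restricting $\pi$ to the first tensor factor and invoking Proposition \ref{PropClassRep} together with Corollary \ref{CorOneDim}, the restriction decomposes as a multiple of a single irreducible $\pi_S$, for otherwise distinct $c$-sets would furnish a $B_x$-invariant splitting (the two factors commute) contradicting irreducibility of $\pi$. Writing $\Hsp \cong \Hsp_{\pi_S} \otimes M$ accordingly, the commuting irreducible action of the second factor on the multiplicity space $M$ yields $M \cong \Hsp_{\pi_T}$ for a unique $T$.

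Third, I would translate the two kernel conditions into constraints on $(S,T)$. Vanishing of $C^{(1)} + C^{(2)}$ forces $c' = -c$, so $Z_S$ is an irreducible $c$-set and $Z_T$ an irreducible $(-c)$-set for some $c \in \R$; Proposition \ref{PropClassRep} moreover places $Z_S \subseteq \Lambda_{x^2}$ and $Z_T \subseteq \Lambda_1$. The support condition $\pi(\Unit_r^{(1)} \Unit_s^{(2)}) = 0$ for $rs \notin \Lambda_x$ says exactly that every $(r,s) \in S \times T$ satisfies $rs \in \Lambda_x$, and squaring gives $Z_S Z_T \subseteq \Lambda_x^2 = x^2 q^{2\Z}$. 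Conversely any triple $(c, S, T)$ satisfying these constraints yields a well-defined $\pi_S \otimes \pi_T$ that descends through $\Phi$ to an irreducible $\pi_{S,T}$ of $\mathscr{A}_x$, completing the classification.

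The main obstacle I anticipate is the tensor-product decomposition in the second step. Because $B_x$ is not a $C^*$-algebra, the standard bicommutant arguments for irreducible reps of minimal tensor products do not directly apply. Scalarity of the Casimirs must be argued componentwise on the spectral fibers, and the subsequent multiplicity-space factorization relies crucially on the one-dimensionality of weight spaces from Corollary \ref{CorOneDim} together with the concrete models of Proposition \ref{PropClassRep}. Ruling out any exotic non-factorizable irreducible representation of $B_x$ is the only substantive difficulty; everything else is bookkeeping with the kernel description of Lemma \ref{LemKer}.
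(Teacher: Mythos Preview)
Your proposal is correct and follows essentially the same route as the paper: pull back along $\Phi$ via Lemmas \ref{LemHomUqtodyn} and \ref{LemKer}, factor the irreducible representation of $B_x$ as a tensor product of irreducibles of the two factors, and then read off the kernel constraints. The only difference is one of emphasis in your second step: the paper argues the tensor factorisation in one line from the cyclicity of any non-zero $\pi(\Unit_r^{(1)}\Unit_s^{(2)})\Hsp$, whereas you make explicit the Casimir-scalarity and one-dimensional-weight-space ingredients (via Corollary \ref{CorOneDim} and Proposition \ref{PropClassRep}) that this cyclicity argument tacitly uses---so you have in fact unpacked precisely the point the paper leaves terse.
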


\begin{proof} By Lemma \ref{LemKer}, any irreducible representation of $\mathscr{A}_x$ is a factorisation over $\Phi$ of some irreducible $^*$-representation $\pi$ of $U_q^{\Gamma_x}(\su(1,1))\otimes U_q^{\Gamma_1}(\su(1,1))$. But as any non-zero $\pi(\Unit_r^{(1)}\Unit_s^{(2)})\Hsp$ is cyclic, it is easily seen that all irreducible $^*$-representations of $U_q^{\Gamma_x}(\su(1,1))\otimes U_q^{\Gamma_1}(\su(1,1))$ split as a tensor product $\pi_1\otimes \pi_2$ of irreducible $^*$-representations. As we want $\pi$ to factor over $\Phi$, we then again infer from Lemma \ref{LemKer} that necessary and sufficient conditions on $\pi_1$ and $\pi_2$ for factorisation over $\Phi$ are that $\pi_1(C)$ and $-\pi_2(C)$ are the same scalar, and $\pi_1(\Unit_r)=0$ or $\pi_2(\Unit_s)=0$ if $rs\notin \Lambda_x$. This is easily seen to be equivalent with the statement of the theorem.
\end{proof}

Let now $\Omega = \Phi(C^{(1)}) = -\Phi(C^{(2)}) \in M(\mathscr{A}_x)$, which is a central element we will call the \emph{Casimir} of $SU_{q,x}^{\dyn}(2)$. It correspons to the Casimir element for dynamical quantum $SU(2)$ introduced in \cite{KoR1}. Let $A_x$ be the universal C$^*$-envelope of $\mathscr{A}_x$. As the $\Omega\UnitC{y}{z}$ define orthogonal bounded elements in $\mathscr{A}_x\subseteq A_x$, we can make sense of $\Omega$ as an element affiliated with $A_x$, i.e. $\Omega\,\eta\,A_x$ \cite{Wor2}.

\begin{Cor}\label{CorSpecUni} Let $k_0\in \Z$ be the unique integer such that $q< q^{k_0}x^2\leq 1$, and write $-c_{0} = \max \{\tau(q^{k_0-1}x^{2}),\tau(q^{k_0}x^{2})\}$. Then the spectrum of $\Omega \eta A_x$ equals the set \[\Spec(\Omega) = \lbrack c_0,q+q^{-1}\rbrack \cup \tau(q^{\Z})\cup \tau (-x^2q^{\Z}).\]
\end{Cor}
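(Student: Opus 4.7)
By Theorem \ref{TheoIrrAx}, every irreducible $^*$-representation of $\mathscr{A}_x$ has the form $\pi_{S,T}$, and since $\Omega = \Phi(C^{(1)})$ is central, Lemma \ref{CorCas} gives $\pi_{S,T}(\Omega) = c$ for the unique scalar $c \in \R$ such that $Z_S$ is an irreducible $c$-set in $\Lambda_{x^2}$, $Z_T$ is an irreducible $(-c)$-set in $\Lambda_1$, and $Z_S Z_T \subseteq x^2 q^{2\Z}$. Since $\Omega$ is self-adjoint affiliated to $A_x$, its spectrum equals the closure of the set of such attainable scalars. The plan is a case analysis based on the series classification of Proposition \ref{PropClass1D}, applied in parallel to the two lattices.

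Write $\alpha := q^{k_0}x^{2} \in (q,1]$. The $q^{2\Z}$-coset representatives inside $\Lambda_{x^{2}} \cap [q,q^{-1})$ are (generically) the two points $\alpha$ and $\alpha/q$, while those inside $\Lambda_1 \cap [q,q^{-1})$ are $1$ and $q$. The compatibility $Z_S Z_T \subseteq x^{2} q^{2\Z}$ thus reduces to a parity condition on the exponents of the chosen representatives, and since each lattice contributes one representative of each parity, this condition can always be met whenever the relevant $c$-set classes are non-empty. The analysis now splits: for $c \in (-2,2)$ both $Z_S$ and $Z_T$ are principal/strange; for $c \in [2,q+q^{-1})$ the $(-c)$-side switches to complementary (whose range $(q/w_{-c},w_{-c}/q)$ contains $1$ precisely when $w_{-c}>q$, i.e.\ $c<q+q^{-1}$), with $Z_S$ still principal; and for $c \in (c_0,-2]$ the roles are reversed. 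In this last range, complementary $Z_S$ contains $\alpha$ (resp.\ $\alpha/q$) if and only if $-c < \tau(q^{k_0-1}x^{2})$ (resp.\ $-c < \tau(q^{k_0}x^{2})$), and the union of these two conditions is exactly $c > c_0$, which explains the maximum in the definition of $c_0$. The endpoint $c = q+q^{-1}$ is attained by trivial or first large discrete $Z_T$, and $c = c_0$ by the first large discrete $Z_S$, so that $[c_0, q+q^{-1}] \subseteq \Spec(\Omega)$ as a closed interval.

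The isolated points come from the large discrete series on either side. Above $q+q^{-1}$, the condition $w_{-c} \in q^{\Z} \cap (0,q)$ forces $w_{-c} = q^{n}$ with $n \geq 2$, hence $c = \tau(q^{n})$, yielding the points of $\tau(q^{\Z})$ outside $[c_0,q+q^{-1}]$ (the remaining values $\tau(q^0) = 2$ and $\tau(q^{\pm 1}) = q+q^{-1}$ already lie in the interval). Dually, below $c_0$, the condition $w_c \in \Lambda_{x^{2}} \cap (0,1]$ (or its image under $y \mapsto y^{-1}$) forces $c = -\tau(x^{2}q^{n})$ for $n \in \Z$, of which precisely the values $n \notin \{k_0,k_0-1\}$ provide new isolated points, the other two already contributing to $[c_0,q+q^{-1}]$. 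Assembling these contributions yields $\Spec(\Omega) = [c_0,q+q^{-1}] \cup \tau(q^{\Z}) \cup \tau(-x^{2}q^{\Z})$. The main technical obstacle is the systematic bookkeeping of the parity constraint across all sub-cases; this is dispatched by the freedom in choosing between the two representatives on each side, with the degenerate situation $\alpha = 1$ (i.e.\ $x^{2} \in q^{\Z}$) requiring separate but routine verification that no representation is lost.
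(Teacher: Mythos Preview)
Your proof is correct and follows essentially the same approach as the paper's: identify $\Spec(\Omega)$ with the closure of the set of scalars $c$ for which compatible irreducible $c$- and $(-c)$-sets exist in the two lattices (via Theorem~\ref{TheoIrrAx}), then run the case analysis of Proposition~\ref{PropClass1D} on each side. The paper is somewhat terser about the parity constraint $Z_S Z_T \subseteq x^{2}q^{2\Z}$, simply asserting that once the harder side is handled the other side is automatic, whereas you make this explicit via the two coset representatives; but this is a difference in exposition, not in strategy.
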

\begin{proof} The spectrum of $\Omega$ is the closure of the collection of all values $\Omega$ can take in irreducible $^*$-representations of $\mathscr{A}$. From Theorem \ref{TheoIrrAx}, it follows that $\Spec(\Omega)$ is the collection of all $c$'s such that there exists a $c$-set $Z$ and $-c$-set $W$ with
\begin{itemize}
\item $Z \subseteq \Lambda_{x^2}$,
\item $W\subseteq \Lambda_1$, 
\item $ZW \subseteq x^2q^{2\Z}$.
\end{itemize}

Now from the classification in Proposition \ref{PropClass1D}, it follows that we essentially have to consider 3 cases.

Namely, consider first $-2<c<2$. Then it follows immediately that $Z$ and $W$ always exist, hence $(-2,2)\subseteq \Spec(\Omega)$.

Consider now $c\geq 2$. Then if we find a $-c$-set $W\subseteq \Lambda_1$, the existence of a $Z$ as above is automatically guaranteed. But from the classification in Proposition \ref{PropClass1D}, there are essentially four cases in which such a $W$ can exist. Case \eqref{It1} arises if there exists $m\in \Z$ with $q^{m+1}<w_{-c}$ and $q^{-m+1}<w_{-c}$. Clearly, this happens if and only if $q<w_{-c}\leq 1$, that is, $2\leq c<q+q^{-1}$. By Case \eqref{It2}, we can make the right hand side equality non-strict. On the other hand, it is not hard to see that cases \eqref{Ita} or \eqref{Itb} can appear if and only if $w_{-c}\in q^{\N}$, that is, $c\in \tau(q^{\Z})$. 

The case $c\leq -2$ is treated similarly (and essentially contains the previous argument as a special case). Now it suffices to verify the existence of a $c$-set $Z\subseteq \Lambda_{x^2}$. We see by some elementary computation that Case \eqref{It1} arises if and only if $c_0<c$. Case \eqref{Itb} appears if and only if $w_c \in x^2q^{\Z}$, that is $c\in \tau(-x^2q^{\Z})$, and this set of $c$ is not enlarged in Case \eqref{Ita}. Note that this set contains in particular the boundary point $c_0$. Finally, Case \eqref{It2} only appears when $x^2\in q^{\Z}$, but in this case the value $c=-q-q^{-1}$ is contain in the set corresponding to Case \eqref{Itb}. 
\end{proof}

\begin{Rem} It is easy to see that $\UnitC{y}{z}\mathscr{A}_x\UnitC{y}{z} = \UnitC{y}{z}\mathrm{Pol}(\Omega)$, with $\mathrm{Pol}(\Omega)$ the polynomial algebra in $\Omega$. Hence $\UnitC{y}{z}A_x\UnitC{y}{z} = C_0(X_{y,z})$ for some compact subset $X_{y,z}\subseteq \R$, which can be described (with some more effort and in a more tedious way) along the lines of Corollary \ref{CorSpecUni}. In particular, the invariant state $\phi_{y,z}$ on $A_x$ corresponds to a probability measure on $X_{y,z}$. These probability measures can be shown to be Askey-Wilson measures (with respect to parameters determined in terms of $y$ and $z$), a result which is in essence already contained in \cite{KoR1}. In particular, it follows from this that the union of the supports of all these measures is the set $\lbrack -2,2\rbrack \cup \tau(q^{\Z})\cup \tau (-x^2q^{\Z})$, which is strictly smaller than $\Spec(\Omega)$. Consequently, the invariant weight $\phi$ on $A_x$ is not faithful. 
\end{Rem}

\end{document}